\documentclass[a4,11pt,fleqn]{article}
\usepackage{pstricks}
\usepackage{enumitem}
\usepackage{amsmath}
\usepackage{amssymb}
\usepackage{pifont}
\usepackage{theorem} 
\usepackage{euscript}
\usepackage{exscale,relsize}
\usepackage{epic,eepic}
\usepackage{multicol}
\usepackage{makeidx}
\usepackage{srcltx}         
\usepackage{xcolor}
\usepackage{url}
\usepackage{charter}
\usepackage{mathrsfs} 
\newcommand{\email}[1]{\href{mailto:#1}{\nolinkurl{#1}}}
\usepackage[normalem]{ulem}     
\topmargin     0.0cm
\oddsidemargin -0.2cm
\textwidth     16.9cm 
\headheight    0.0cm
\textheight    22.5cm
\parindent     6mm
\parskip       8pt
\tolerance     2000
\newlength{\mySubFigSize}
\setlength{\mySubFigSize}{5cm}
\definecolor{labelkey}{rgb}{0,0.08,0.45}
\definecolor{refkey}{rgb}{0,0.6,0.0}
\definecolor{Brown}{rgb}{0.45,0.0,0.05}
\definecolor{dgreen}{rgb}{0.00,0.49,0.00}
\definecolor{dblue}{rgb}{0,0.08,0.75}
\RequirePackage[dvips,colorlinks,hyperindex]{hyperref}
\hypersetup{linktocpage=true,citecolor=dblue,linkcolor=dgreen}
\tolerance 2500

\renewcommand{\leq}{\ensuremath{\leqslant}}
\renewcommand{\geq}{\ensuremath{\geqslant}}

\newcommand{\minimize}[2]{\ensuremath{\underset{\substack{{#1}}}%
{\text{minimize}}\;\;#2 }}

\newcommand{\Frac}[2]{\displaystyle{\frac{#1}{#2}}} 
\newcommand{\Scal}[2]{\bigg\langle{#1}\;\bigg|\:{#2}\bigg\rangle} 
\newcommand{\scal}[2]{{\langle{{#1}\mid{#2}}\rangle}}

\newcommand{\menge}[2]{\big\{{#1}~\big |~{#2}\big\}} 
\newcommand{\Menge}[2]{\left\{{#1}~\left|~{#2}\right.\right\}} 
\newcommand{\KKK}{\ensuremath{\boldsymbol{\mathcal K}}}

\newcommand{\HH}{\ensuremath{{\mathcal H}}}
\newcommand{\GG}{\ensuremath{{\mathcal G}}}

\newcommand{\Sum}{\ensuremath{\displaystyle\sum}}
\newcommand{\emp}{\ensuremath{{\varnothing}}}

\newcommand{\Id}{\ensuremath{\operatorname{Id}}\,}
\newcommand{\cart}{\ensuremath{\raisebox{-0.5mm}{\mbox{\LARGE{$\times$}}}}}

\newcommand{\RR}{\ensuremath{\mathbb{R}}}

\newcommand{\RPP}{\ensuremath{\left]0,+\infty\right[}}

\newcommand{\RX}{\ensuremath{\left]-\infty,+\infty\right]}}

\newcommand{\NN}{\ensuremath{\mathbb N}}

\newcommand{\weakly}{\ensuremath{\:\rightharpoonup\:}}

\newcommand{\ran}{\ensuremath{\text{\rm range}\,}}

\newcommand{\pinf}{\ensuremath{{+\infty}}}

\newcommand{\prox}{\ensuremath{\text{\rm prox}}}

\newcommand{\gra}{\ensuremath{\text{\rm gra}\,}}

\newcommand{\zeroun}{\ensuremath{\left]0,1\right[}}

\newcommand{\braces}[1]{\left\{#1\right\}}

\newcommand{\Bigbraces}[1]{\Big\{{#1}\Big\}}




\newtheorem{theorem}{Theorem}[section]
\newtheorem{lemma}[theorem]{Lemma}

\newtheorem{proposition}[theorem]{Proposition}
\newtheorem{assumption}[theorem]{Assumption}
\theoremstyle{plain}{\theorembodyfont{\rmfamily}%
}
\theoremstyle{plain}{\theorembodyfont{\rmfamily}%
}
\theoremstyle{plain}{\theorembodyfont{\rmfamily}%
\newtheorem{remark}[theorem]{Remark}}
\theoremstyle{plain}{\theorembodyfont{\rmfamily}%
\newtheorem{algorithm}[theorem]{Algorithm}}
\theoremstyle{plain}{\theorembodyfont{\rmfamily}%
}
\theoremstyle{plain}{\theorembodyfont{\rmfamily}%
}
\theoremstyle{plain}{\theorembodyfont{\rmfamily}%
}
\theoremstyle{plain}{\theorembodyfont{\rmfamily}%
\newtheorem{problem}[theorem]{Problem}}

\numberwithin{equation}{section}
\setlength{\itemsep}{1pt} 
\begin{document}

\title{\sffamily\LARGE Asynchronous Block-Iterative Primal-Dual
Decomposition Methods for Monotone 
Inclusions\footnote{Contact author: P. L. Combettes, 
\email{plc@ljll.math.upmc.fr},
phone: +33 1 4427 6319, fax: +33 1 4427 7200.
The work of P. L. Combettes was supported by the 
CNRS MASTODONS project under grant 2013MesureHD and by the 
CNRS Imag'in project under grant 2015OPTIMISME.}}

\author{
Patrick L. Combettes$\,^1$~ and ~Jonathan Eckstein$\,^2$
\\[5mm]
\small $^1$Sorbonne Universit\'es -- UPMC Univ. Paris 06\\
\small UMR 7598, Laboratoire Jacques-Louis Lions\\
\small F-75005, Paris, France\\
\small \email{plc@ljll.math.upmc.fr}\\[5mm]
\small $^2$Department of Managemement Science and Information 
Systems and RUTCOR\\
\small Rutgers University\\
\small Piscataway, NJ 08854, USA\\
\small \email{jeckstei@rci.rutgers.edu}\\[4mm]
}

\date{~}

\maketitle

\vskip 8mm

\begin{abstract} 
\noindent
We propose new primal-dual decomposition algorithms for solving 
systems of inclusions involving sums of linearly composed maximally 
monotone operators. The principal innovation in these algorithms is 
that they are block-iterative in the sense that, at each iteration, 
only a subset of the monotone operators needs to be processed, as
opposed to all operators as in established methods. 
Deterministic strategies are used to select the blocks 
of operators activated at each 
iteration. In addition, we allow for operator processing ``lags'', 
permitting asynchronous implementation.
The decomposition phase of each iteration of our methods 
is to generate points in the graphs of 
the selected monotone operators, in order to construct a half-space 
containing the Kuhn-Tucker set associated with the system. 
The coordination phase of each iteration involves a projection 
onto this half-space.
We present two related methods: 
the first method provides weakly convergent primal and dual
sequences under general conditions, while the second is a variant 
in which strong convergence is guaranteed without additional
assumptions. Neither algorithm requires prior knowledge of bounds
on the linear operators involved or the inversion of linear
operators. Our algorithmic framework unifies and significantly
extends the approaches taken in earlier work on primal-dual
projective splitting methods. 
\end{abstract} 

{\small {\bfseries Keywords.}
asynchronous algorithm, 
block-iterative algorithm, 
duality,
monotone inclusion,
monotone operator,
primal-dual algorithm,
splitting algorithm}

\maketitle

\newpage

\section{Introduction}
This paper considers systems of monotone inclusions of the 
following general form.

\begin{problem}
\label{prob:2}
Let $m$ and $p$ be strictly positive integers, set $I=\{1,\ldots,m\}$
and $K=\{1,\ldots,p\}$, and let $(\HH_i)_{i\in I}$ and 
$(\GG_k)_{k\in K}$ be real Hilbert spaces. For every $i\in I$ and 
$k\in K$, let $A_i\colon\HH_i\to 2^{\HH_i}$ and 
$B_k\colon\GG_k\to 2^{\GG_k}$ 
be maximally monotone, let $z^*_i\in\HH_i$, let $r_k\in\GG_k$, 
and let $L_{ki}\colon\HH_i\to\GG_k$ be linear and bounded. 
Consider the coupled inclusions problem
\begin{equation}
\label{eferw7er3h02-26p}
\text{find}\;\;(\overline{x}_i)_{i\in I}\in\bigoplus_{i\in I}\HH_i
\;\;\text{such that}\;\;(\forall i\in I)\quad
z^*_i\in A_i\overline{x}_i+\Sum_{k\in K}L_{ki}^*
\bigg(B_k\bigg(\Sum_{j\in I}L_{kj}\overline{x}_j-r_k\bigg)\bigg),
\end{equation}
its dual problem 
\begin{multline}
\label{eferw7er3h02-26d}
\text{find}\;\;(\overline{v}^*_k)_{k\in K}\in\bigoplus_{k\in K}
\GG_k\;\;\text{such that}\\ 
(\forall k\in K)\quad -r_k\in-\Sum_{i\in I}L_{ki}\bigg(A_i^{-1}
\bigg(z^*_i-\Sum_{l\in K}L_{li}^*\overline{v}^*_l\bigg)\bigg)
+B_k^{-1}\overline{v}^*_k,
\end{multline}
and the associated Kuhn-Tucker set
\begin{multline}
\label{eferw7er3h02-26k}
\boldsymbol{Z}=\bigg\{\big((\overline{x}_i)_{i\in I},
(\overline{v}^*_k)_{k\in K}\big)\;\bigg |\;
(\forall i\in I)\;\;\overline{x}_i\in\HH_i\;\;\text{and}\;\;
z^*_i-\sum_{k\in K}L_{ki}^*\overline{v}_k^*\in
A_i\overline{x}_i,\:\;\text{and}\\
(\forall k\in K)\;\;\overline{v}_k^*\in\GG_k\;\;\text{and}\;\;
\sum_{i\in I}L_{ki}\overline{x}_i-r_k\in B_k^{-1}\overline{v}_k^*
\bigg\}.
\end{multline}
The problem is to find a point in $\boldsymbol{Z}$.
The sets of solutions to \eqref{eferw7er3h02-26p} and 
\eqref{eferw7er3h02-26d} are denoted by $\mathscr{P}$ and 
$\mathscr{D}$, respectively.
\end{problem}

As discussed in \cite{Siop14}, Problem~\ref{prob:2} models a wide
range of problems arising game theory, image recovery, evolution 
equations, machine learning, signal processing, mechanics, 
the cognitive sciences, and domain decomposition methods in 
partial differential equations. In \cite[Section~5]{Siop13}, it 
was shown that an important special case of Problem~\ref{prob:2} 
is the following optimization problem, in which the monotone 
operators $(A_i)_{i\in I}$ and $(B_k)_{k\in K}$ are taken to 
be subdifferentials.

\begin{problem}
\label{prob:3}
Let $m$ and $p$ be strictly positive integers, set $I=\{1,\ldots,m\}$
and $K=\{1,\ldots,p\}$, and let $(\HH_i)_{i\in I}$ and 
$(\GG_k)_{k\in K}$ be real Hilbert spaces. For every $i\in I$ and 
$k\in K$, let $f_i\colon\HH_i\to\RX$ and
$g_k\colon\GG_k\to\RX$ be proper lower semicontinuous convex
functions, let $z^*_i\in\HH_i$, let $r_k\in\GG_k$, 
and let $L_{ki}\colon\HH_i\to\GG_k$ be linear and bounded. 
Suppose that
\begin{equation}
\label{e:2012-10-21a}
(\forall i\in I)\quad
z^*_i\in\ran\bigg(\partial f_i+\sum_{k\in K}L_{ki}^*\circ
\partial g_k\circ\bigg(\sum_{j\in I}L_{kj}\cdot-r_k\bigg)\bigg).
\end{equation}
The problem is to solve the primal minimization problem
\begin{equation}
\label{e:2012-10-23p}
\minimize{(x_i)_{i\in I}\in\bigoplus_{i\in I}\HH_i}
{\sum_{i\in I}\big(f_i(x_i)-\scal{x_i}{z^*_i}\big)+\sum_{k\in K} 
g_k\bigg(\sum_{i\in I}L_{ki}x_i-r_k\bigg)}
\end{equation}
along with its dual problem
\begin{equation}
\label{e:2012-10-23d}
\minimize{(v^*_k)_{k\in K}\in\bigoplus_{k\in K}\GG_k}
{\sum_{i\in I}f_i^*\bigg(z^*_i-\sum_{k\in K}L_{ki}^*v^*_k\bigg)
+\sum_{k\in K}\big(g^*_k(v^*_k)+\scal{v^*_k}{r_k}\big)}.
\end{equation}
\end{problem}

In recent years, several decomposition algorithms have been 
proposed to solve Problem~\ref{prob:2} (or at least the primal 
problem \eqref{eferw7er3h02-26p}) under various hypotheses 
\cite{Siop14,Nfao15,Sico10,Bot15b,Bot13b,Bric15,%
Siop13,Siop15,Davi15,Pesq15}. 
In such algorithms, the monotone operators as well as the 
linear operators are evaluated individually.  
The methods we propose in the present paper for solving 
Problem~\ref{prob:2} are based on those of 
\cite{Siop14,Nfao15}, which are themselves based on the 
projective primal-dual methods initiated in \cite{Svai08,Svai09} 
for finding a zero of the sum of monotone operators. The basic idea
underlying this class of methods is to generate at each iteration
points in the graphs of all the monotone operators in such a way 
as to construct a half-space containing the Kuhn-Tucker set 
$\boldsymbol{Z}$. The calculations of each of
these points are resolvent computations involving a single
monotone operator $A_i$ or $B_k$, 
which is what makes the methods splitting algorithms.
The coordination step of the method is to project the current
iterate onto the recently constructed half-space.
The advantages of this approach are that it does not impose 
additional assumptions on the operators present in the 
formulation, it does not require knowledge of the norm of 
the linear operators $(L_{ik})_{i\in I,k\in K}$ or of 
combinations thereof, and it does not involve the inversion of 
linear operators. 

The methods of \cite{Siop14,Nfao15} must evaluate
all $m+p$ resolvents of the operators $(A_i)_{i\in I}$ and 
$(B_k)_{k\in K}$ at every iteration, with only limited
ability to pass information between these calculations.
Essentially, the resolvents of all the operators $(A_i)_{i\in I}$ 
must be evaluated independently, and then similarly for all 
the operators $(B_k)_{k\in K}$. In this setting, the only
information flow within each iteration is from the 
$(A_i)_{i\in I}$ calculations to the 
$(B_k)_{k\in K}$ calculations. This property results in 
an algorithm in which large blocks
of calculations must be performed before any information is
exchanged between subsystems. Although in principle conducive to
parallel computing, this kind of structure can still lead to
difficulties even in a parallel execution environment: it requires
an essentially synchronous implementation, so if some small
subset of the subsystems represented by the operators
$(A_i)_{i\in I}$ or $(B_k)_{k\in K}$ are more
computation-intensive than others, load balancing can become
problematic: most processors may have to sit idle while the
remaining few complete their tasks.  This kind of structure is
common to nearly all prior splitting schemes for more than two
monotone operators, the only exception we are aware of
being that of \cite{Svai09} for the special case
\begin{equation}
\label{e:2009}
\text{find}\;\;\overline{x}\in\HH\;\;\text{such that}\;\; 
0\in\sum_{k\in K}B_k\overline{x}
\end{equation}
of \eqref{eferw7er3h02-26p}.
In that case, information can flow in fairly
arbitrary ways between the $p$ resolvent calculations comprising
each iteration, as described by a set of algorithm parameters that
is quadratic in $p$; the selection of these parameters is subject 
to a specific eigenvalue condition. However, the algorithm is still
fundamentally synchronous, and it is has never been clear how to
select its many parameters.

This paper presents a different approach to constructing more
flexible and potentially asynchronous decomposition methods for
problems fitting the general structure represented
by Problem~\ref{prob:2}. The key idea is that our algorithm has the
ability to process an essentially arbitrary subset of the
operators between successive coordination/projection operations.
The only restriction is one adapted from block-iterative methods
for convex feasibility problems \cite{Baus96,Jamo97,Otta88}: 
for some possibly large positive integer
$M$, each operator must be processed at least once over every span
of $M$ consecutive iterations. 
To our knowledge, this is the first application
of this kind of versatile deterministic 
control scheme to finding zeros of sums of
operators. Such control schemes have been used in 
convex feasibility problems \cite{Baus96,Jamo97}. 
This aspect of our algorithm gives it 
potential flexibility absent from other splitting schemes
for monotone inclusions: first, it provides the ability to find an
arbitrary balance between computational effort expended on the
subsystems and that expended on coordination.  For example, if the
subsystems are relatively time-consuming to process, one could
perform as few as a single subsystem evaluation between successive
projection steps, with the projections immediately spreading
the information from each subsystem evaluation to each successive
one. The second aspect of the flexibility of our approach 
involves the balance of computational effort between subsystems: in
prior decomposition methods for monotone inclusions, every operator
must be processed exactly the same number of times, but the class
of algorithms proposed here is much more flexible. 
If, for example, some operators are less time-consuming to process
than others, one has the option of processing them more frequently. 
Such features can be very useful in applications such as 
those described in \cite{Nume15}.

Our analysis allows each activation of an operator to use
information originating from an earlier iteration than the one
in which its results are incorporated into the computation. This
feature makes it possible to implement the algorithm 
asynchronously: the points in the graphs of the monotone operators
incorporated into the projection step during a given iteration 
may be the results of resolvent computations initiated during 
earlier iterations. 
Our analysis shows that our method still converges 
so long as there is a fixed (but arbitrary) upper bound on 
the number of 
iterations between initiation and incorporation of a 
resolvent calculation. The potentially asynchronous nature of 
our method is a significant asset in the design of efficient 
parallel implementations. 

Prior work on projective splitting methods has used two different
approaches to constructing affine half-spaces to separate the 
target set $\boldsymbol{Z}$ from the current iterate. The original 
approach in \cite{Svai08,Svai09} was developed for the inclusion
problem \eqref{e:2009}. In this special case
of \eqref{eferw7er3h02-26p}, it was
possible to efficiently confine the iterates to a specific 
subspace $\KKK$ of the primal-dual space, which can be numerically 
advantageous. 
In the general setting of Problem~\ref{prob:2}, the analysis of
\cite{Siop14,Nfao15} used an alternative 
half-space construction in 
which the iterates are not confined to a subspace. A secondary
contribution of this paper is to develop a unifying framework
for constructing separators for $\boldsymbol{Z}$ in which both
prior approaches appear as special cases.

We present two classes of algorithms based on many of the same
underlying building blocks and which may be viewed as asynchronous
block-iterative extensions of the algorithms of
\cite{Siop14,Nfao15}. The first class uses a straightforward
half-space projection at each iteration and allows for conventional
overrelaxation of the projection steps by factors upper bounded by
$2$. This class exhibits weak convergence to an unspecified
Kuhn-Tucker point. The second class is a variant that involves a
more complicated projection operation and does not use
overrelaxation, but induces strong convergence to the unique point
in the Kuhn-Tucker set that best approximates a given reference
point. Numerical experiments with these new algorithms are being 
conducted and we shall report on their results elsewhere.

When applied in suitable product spaces, the block-coordinate
methods of \cite{Siop15,Pesq15} can be used to derive
block-iterative splitting algorithms methods for a certain class of
problems. However, unlike the
methods we propose here,
the resulting algorithms have been proved
to converge only under random operator selection strategies, and
they require either joint cocoercivity assumptions on the
operators $(B_k)_{k\in K}$ or the ability to block-decompose 
the projection onto the graph of certain linear operators. 

{\bfseries Notation.} 
Our notation is standard and follows \cite{Livre1}, which 
contains the
necessary background on monotone operators and convex analysis. 
The scalar product of a Hilbert space is denoted by 
$\scal{\cdot}{\cdot}$ and the associated norm by $\|\cdot\|$.
The projection operator onto a nonempty closed convex subset $C$ 
of $\HH$ is denoted by $P_C$. The symbols $\weakly$ and $\to$ 
denote respectively weak and strong convergence, and $\Id$ 
denotes the identity operator. The Hilbert direct 
sum of two Hilbert spaces $\HH$ and $\GG$ is denoted by 
$\HH\oplus\GG$, and the power set of $\HH$ by
$2^{\HH}$. Given $A\colon\HH\to 2^{\HH}$, 
$\gra A$ denotes the graph of $A$, 
$A^{-1}$ denotes the inverse of $A$, 
and $J_A=(\Id+A)^{-1}$ denotes the resolvent of $A$. 

\section{Analysis of a generic primal-dual composite inclusion 
problem}
\label{sec:2}

\subsection{Problem statement}

Our investigation will be simplified by the analysis of the 
following problem, which can be regarded as a reduction of
Problem~\ref{prob:2} to the case when $m=K=1$, $z^*_1=0$, 
and $r_1=0$.

\begin{problem}
\label{prob:1}
Let $\HH$ and $\GG$ be real Hilbert spaces. Let 
$A\colon\HH\to 2^{\HH}$ and $B\colon\GG\to 2^{\GG}$ be maximally 
monotone operators, and let $L\colon\HH\to\GG$ be a bounded linear 
operator. Consider the inclusion problem
\begin{equation} 
\label{e:primal}
\text{find}\;\;\overline{x}\in\HH\;\;\text{such that}\;\; 
0\in A\overline{x}+L^*BL\overline{x},
\end{equation}
its dual problem
\begin{equation}
\label{e:dual}
\text{find}\;\;\overline{v}^*\in\GG\;\;\text{such that}\;\; 
0\in -LA^{-1}(-L^*\overline{v}^*)+B^{-1}\overline{v}^*,
\end{equation}
and the associated Kuhn-Tucker set
\begin{equation}
\label{e:2013-05-21a}
\boldsymbol{Z}=\menge{(x,v^*)\in\HH\oplus\GG}
{-L^*v^*\in Ax\:\;\text{and}\;Lx\in B^{-1}v^*}.
\end{equation}
The problem is to find a point in $\boldsymbol{Z}$.
The sets of solutions to \eqref{e:primal} and \eqref{e:dual} are 
denoted by $\mathscr{P}$ and $\mathscr{D}$, respectively.
\end{problem}

\begin{proposition}
\label{pgre76g12-12}
Consider the setting of Problem~\ref{prob:1} and let $\KKK$ 
be a closed vector subspace of $\HH\oplus\GG$ such that
$\boldsymbol{Z}\subset\KKK$. Then the following hold:
\begin{enumerate}
\item
\label{pgre76g12-12i}
$\boldsymbol{Z}$ is a closed convex subset of 
$\mathscr{P}\times\mathscr{D}$.
\item
\label{pgre76g12-12ii}
$\mathscr{P}\neq\emp\Leftrightarrow\boldsymbol{Z}
\neq\emp\Leftrightarrow\mathscr{D}\neq\emp$.
\item
\label{pgre76g12-12iii}
For every $\mathsf{a}=(a,a^*)\in\gra A$ and 
$\mathsf{b}=(b,b^*)\in\gra B$, set
$\boldsymbol{s}_{\mathsf{a},\mathsf{b}}^*=(a^*+L^*b^*,b-La)$, 
$\boldsymbol{t}_{\mathsf{a},\mathsf{b}}^*=P_{\KKK}
\boldsymbol{s}_{\mathsf{a},\mathsf{b}}^*\,$,
$\eta_{\mathsf{a},\mathsf{b}}=\scal{a}{a^*}+\scal{b}{b^*}$, and 
\begin{equation}
\boldsymbol{H}_{\mathsf{a},\mathsf{b}}=
\menge{\boldsymbol{x}\in\boldsymbol{\mathcal{K}}}
{\scal{\boldsymbol{x}}{\boldsymbol{t}_{\mathsf{a},\mathsf{b}}^*}
\leq\eta_{\mathsf{a},\mathsf{b}}}.
\end{equation}
Then the following hold:
\begin{enumerate}
\item
\label{pgre76g12-12iiia}
Let $\mathsf{a}\in\gra A$ and $\mathsf{b}\in\gra B$. Then
$\boldsymbol{H}_{\mathsf{a},\mathsf{b}}=\KKK
\;\Leftarrow\;\boldsymbol{s}^*_{\mathsf{a},\mathsf{b}}
=\boldsymbol{0}\;\Rightarrow\;\big[\,(a,b^*)\in\boldsymbol{Z}
\;\text{and}\;\eta_{\mathsf{a},\mathsf{b}}=0\,\big]$.
\item
\label{pgre76g12-12iiib}
$\boldsymbol{Z}=\bigcap_{\mathsf{a}\in\gra A}
\bigcap_{\mathsf{b}\in\gra B}
\boldsymbol{H}_{\mathsf{a},\mathsf{b}}$\,.
\end{enumerate}
\item
\label{pgre76g12-12iv}
Let $(a_n,a_n^*)_{n\in\NN}$ be a sequence in $\gra A$, let 
$(b_n,b_n^*)_{n\in\NN}$ be a sequence in $\gra B$, let $x\in\HH$, 
and let $v^*\in\GG$. Suppose that $a_n\weakly x$, 
$b_n^*\weakly v^*$, $a_n^*+L^*b_n^*\to 0$, and $La_n-b_n\to 0$. 
Then $(x,v^*)\in\boldsymbol{Z}$.
\end{enumerate}
\end{proposition}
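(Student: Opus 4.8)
The plan is to prove the four parts in order, leaning on the convexity of the Kuhn-Tucker set and the maximal monotonicity of $A$ and $B$. Let me sketch each.

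The plan is to treat the assertions in the order \ref{pgre76g12-12iii}, \ref{pgre76g12-12i}, \ref{pgre76g12-12ii}, \ref{pgre76g12-12iv}, since the half-space description \ref{pgre76g12-12iiib} is the engine behind the convexity and closedness asserted in \ref{pgre76g12-12i}, and the algebraic identity sustaining it resurfaces in \ref{pgre76g12-12iv}. The backbone of the argument is the elementary identity, valid for every $\mathsf a=(a,a^*)\in\gra A$, every $\mathsf b=(b,b^*)\in\gra B$, and every $(x,v^*)\in\HH\oplus\GG$,
\[
\eta_{\mathsf a,\mathsf b}-\scal{(x,v^*)}{\boldsymbol s^*_{\mathsf a,\mathsf b}}
=\scal{a-x}{a^*+L^*v^*}+\scal{b-Lx}{b^*-v^*},
\]
which follows from bilinearity together with $\scal{x}{L^*v^*}=\scal{Lx}{v^*}$, the cross terms $\scal{Lx}{v^*}$ cancelling. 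For \ref{pgre76g12-12iiia}, if $\boldsymbol s^*_{\mathsf a,\mathsf b}=\boldsymbol 0$ then $a^*=-L^*b^*$ and $b=La$, whence $\boldsymbol t^*_{\mathsf a,\mathsf b}=P_{\KKK}\boldsymbol 0=\boldsymbol 0$ and $\eta_{\mathsf a,\mathsf b}=\scal{a}{-L^*b^*}+\scal{La}{b^*}=0$, so that $\boldsymbol H_{\mathsf a,\mathsf b}=\KKK$; moreover $a^*\in Aa$ yields $-L^*b^*\in Aa$, while $b^*\in Bb=B(La)$, i.e. $(a,b^*)\in\boldsymbol Z$.

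For the inclusion ``$\subset$'' in \ref{pgre76g12-12iiib} I would take $(x,v^*)\in\boldsymbol Z$, so that $-L^*v^*\in Ax$ and $v^*\in B(Lx)$. Since $(x,v^*)\in\boldsymbol Z\subset\KKK$ and $P_{\KKK}$ is a self-adjoint idempotent fixing $\KKK$, one has $\scal{(x,v^*)}{\boldsymbol t^*_{\mathsf a,\mathsf b}}=\scal{(x,v^*)}{\boldsymbol s^*_{\mathsf a,\mathsf b}}$; the two monotonicity inequalities $\scal{a-x}{a^*+L^*v^*}\geq 0$ and $\scal{b-Lx}{b^*-v^*}\geq 0$ then combine through the identity to give $\scal{(x,v^*)}{\boldsymbol s^*_{\mathsf a,\mathsf b}}\leq\eta_{\mathsf a,\mathsf b}$, i.e. $(x,v^*)\in\boldsymbol H_{\mathsf a,\mathsf b}$. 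For the reverse inclusion I would take $(x,v^*)$ in the intersection; then $(x,v^*)\in\KKK$ because each $\boldsymbol H_{\mathsf a,\mathsf b}\subset\KKK$, so the same identity turns the defining inequalities into
\[
\scal{a-x}{a^*+L^*v^*}+\scal{b-Lx}{b^*-v^*}\geq 0
\qquad\text{for all }\mathsf a\in\gra A\text{ and }\mathsf b\in\gra B.
\]
The main obstacle is to decouple these two terms so as to invoke maximal monotonicity of $A$ and of $B$ \emph{separately}. The device is to feed in graph points produced by the resolvents: taking $b=J_B(Lx+v^*)$ and $b^*=(Lx+v^*)-b$ gives $b-Lx=-(b^*-v^*)$, hence $\scal{b-Lx}{b^*-v^*}=-\norm{b^*-v^*}^2\leq 0$, so that $\scal{a-x}{a^*+L^*v^*}\geq 0$ for every $\mathsf a\in\gra A$ and maximality of $A$ yields $-L^*v^*\in Ax$; symmetrically, taking $a=J_A(x-L^*v^*)$ and $a^*=(x-L^*v^*)-a$ forces $\scal{a-x}{a^*+L^*v^*}\leq 0$ and delivers $v^*\in B(Lx)$. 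Thus $(x,v^*)\in\boldsymbol Z$.

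Assertion \ref{pgre76g12-12i} then follows readily: each $\boldsymbol H_{\mathsf a,\mathsf b}$ is the intersection of the closed subspace $\KKK$ with a closed half-space (or equals $\KKK$), hence is closed and convex, so the intersection $\boldsymbol Z$ of \ref{pgre76g12-12iiib} is closed and convex; and a direct reading of \eqref{e:2013-05-21a} shows that $(x,v^*)\in\boldsymbol Z$ forces $0\in Ax+L^*BLx$ and $0\in-LA^{-1}(-L^*v^*)+B^{-1}v^*$, i.e. $\boldsymbol Z\subset\mathscr P\times\mathscr D$. Assertion \ref{pgre76g12-12ii} uses the same bookkeeping: $\boldsymbol Z\neq\emp\Rightarrow\mathscr P\neq\emp$ and $\boldsymbol Z\neq\emp\Rightarrow\mathscr D\neq\emp$ are immediate from $\boldsymbol Z\subset\mathscr P\times\mathscr D$, while conversely a primal solution $\overline x$ supplies some $v^*\in B(L\overline x)$ with $-L^*v^*\in A\overline x$, so $(\overline x,v^*)\in\boldsymbol Z$, and a dual solution $\overline v^*$ supplies some $x$ with $-L^*\overline v^*\in Ax$ and $Lx\in B^{-1}\overline v^*$, so $(x,\overline v^*)\in\boldsymbol Z$.

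Finally, for \ref{pgre76g12-12iv} I would first upgrade the hypotheses: weak continuity of $L$ and $L^*$ together with $La_n-b_n\to 0$ and $a_n^*+L^*b_n^*\to 0$ give $b_n\weakly Lx$ and $a_n^*\weakly-L^*v^*$. The decisive point is the cross-term cancellation
\[
\scal{a_n}{a_n^*}+\scal{b_n}{b_n^*}
=\scal{a_n}{a_n^*+L^*b_n^*}-\scal{La_n-b_n}{b_n^*}\longrightarrow 0,
\]
valid because $(a_n)_{n\in\NN}$ and $(b_n^*)_{n\in\NN}$ are bounded while the two residuals tend to $0$; and the limit $0$ is exactly $\scal{(x,Lx)}{(-L^*v^*,v^*)}$. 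Applying the standard weak--weak sequential closedness property of the maximally monotone product operator $A\times B$ on $\HH\oplus\GG$ (which holds under precisely this limsup condition on the duality products, see \cite{Livre1}) to the sequence $((a_n,b_n),(a_n^*,b_n^*))_{n\in\NN}$ then gives $((x,Lx),(-L^*v^*,v^*))\in\gra(A\times B)$, that is, $-L^*v^*\in Ax$ and $v^*\in B(Lx)$, whence $(x,v^*)\in\boldsymbol Z$. I expect the decoupling step in \ref{pgre76g12-12iiib} and the cancellation identity in \ref{pgre76g12-12iv} to be the only genuinely delicate points, everything else being bookkeeping with the definitions.
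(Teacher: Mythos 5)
Your proposal is correct, and all the computations check out: the key bilinear identity $\eta_{\mathsf a,\mathsf b}-\scal{(x,v^*)}{\boldsymbol s^*_{\mathsf a,\mathsf b}}=\scal{a-x}{a^*+L^*v^*}+\scal{b-Lx}{b^*-v^*}$ is valid, the resolvent decoupling correctly reduces maximality of $A$ and $B$ separately, and the cross-term cancellation in \ref{pgre76g12-12iv} together with the limsup form of weak--weak closedness of $\gra(A\times B)$ is sound. The difference from the paper is one of economy rather than substance: the paper proves \ref{pgre76g12-12i}, \ref{pgre76g12-12ii}, and \ref{pgre76g12-12iv} purely by citation to \cite{Siop11} and \cite{Siop14}, and for \ref{pgre76g12-12iii} its only new content is the reduction $\boldsymbol{H}_{\mathsf a,\mathsf b}=\KKK\cap\boldsymbol{G}_{\mathsf a,\mathsf b}$, where $\boldsymbol{G}_{\mathsf a,\mathsf b}$ is the corresponding half-space in all of $\HH\oplus\GG$; that reduction rests on exactly the observation you use, namely that $\scal{\boldsymbol x}{P_{\KKK}\boldsymbol s^*}=\scal{\boldsymbol x}{\boldsymbol s^*}$ for $\boldsymbol x\in\KKK$, the properties of $\boldsymbol{G}_{\mathsf a,\mathsf b}$ themselves being quoted from \cite[Proposition~2.2]{Siop14}. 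Your proof inlines what those references contain: the monotonicity inequality and resolvent trick behind \ref{pgre76g12-12iiib}, and the demiclosedness argument behind \ref{pgre76g12-12iv}. What the paper's route buys is brevity and an explicit link to the prior literature in which the $\KKK=\HH\oplus\GG$ case was established; what yours buys is a self-contained verification that requires no access to the cited works, and it makes visible that the subspace $\KKK$ enters the argument only through the self-adjointness and idempotence of $P_{\KKK}$. Two cosmetic cautions: in \ref{pgre76g12-12i} the set $\{\boldsymbol x\in\KKK:\scal{\boldsymbol x}{\boldsymbol t^*}\leq\eta\}$ can also be empty (when $\boldsymbol t^*=\boldsymbol 0$ and $\eta<0$), which is harmless since the empty set is closed and convex but should be mentioned alongside ``or equals $\KKK$''; and when invoking the weak--weak closedness property under the limsup condition you should point to a precise statement (or reproduce its three-line proof via monotonicity against an arbitrary $(w,w^*)\in\gra(A\times B)$ and maximality), since the generic citation to \cite{Livre1} leaves the reader to locate it.
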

\begin{proof}
\ref{pgre76g12-12i}: \cite[Proposition~2.8(i)]{Siop11}.

\ref{pgre76g12-12ii}: \cite[Proposition~2.8(iii)-(v)]{Siop11};
see also \cite{Penn00}.

\ref{pgre76g12-12iii}: 
For every $\mathsf{a}=(a,a^*)\in\gra A$ and 
$\mathsf{b}=(b,b^*)\in\gra B$, set
\begin{equation}
\label{eferw7er3h06-13a}
\boldsymbol{G}_{\mathsf{a},\mathsf{b}}=
\menge{\boldsymbol{x}\in\HH\oplus\GG}
{\scal{\boldsymbol{x}}{\boldsymbol{s}_{\mathsf{a},\mathsf{b}}^*}
\leq\eta_{\mathsf{a},\mathsf{b}}},
\end{equation}
and observe that
\begin{align}
\label{eferw7er3h06-13b}
\boldsymbol{H}_{\mathsf{a},\mathsf{b}}
&=\menge{\boldsymbol{x}\in\KKK}
{\scal{\boldsymbol{x}}{\boldsymbol{t}_{\mathsf{a},\mathsf{b}}^*}
\leq\eta_{\mathsf{a},\mathsf{b}}}
\nonumber\\
&=\menge{\boldsymbol{x}\in\KKK}
{\scal{\boldsymbol{x}}
{P_{\KKK}\boldsymbol{s}_{\mathsf{a},\mathsf{b}}^*}
+\scal{\boldsymbol{x}}
{P_{\KKK^\bot}\boldsymbol{s}_{\mathsf{a},\mathsf{b}}^*}
\leq\eta_{\mathsf{a},\mathsf{b}}}
\nonumber\\
&=\menge{\boldsymbol{x}\in\KKK}
{\scal{\boldsymbol{x}}
{\boldsymbol{s}_{\mathsf{a},\mathsf{b}}^*}
\leq\eta_{\mathsf{a},\mathsf{b}}}
\nonumber\\
&=\KKK\cap\boldsymbol{G}_{\mathsf{a},\mathsf{b}}.
\end{align}

\ref{pgre76g12-12iiia}:
By \cite[Proposition~2.2(i)]{Siop14},
$\boldsymbol{G}_{\mathsf{a},\mathsf{b}}=\HH\oplus\GG
\;\Leftrightarrow\;\boldsymbol{s}^*_{\mathsf{a},\mathsf{b}}
=\boldsymbol{0}\;\Rightarrow\;(a,b^*)\in\boldsymbol{Z}
\;\text{and}\;\eta_{\mathsf{a},\mathsf{b}}=0$.
The claim therefore follows from \eqref{eferw7er3h06-13b}.

\ref{pgre76g12-12iiib}:
By \cite[Proposition~2.2(iii)]{Siop14}
$\boldsymbol{Z}=\bigcap_{\mathsf{a}\in\gra A}
\bigcap_{\mathsf{b}\in\gra B}
\boldsymbol{G}_{\mathsf{a},\mathsf{b}}$\,. Hence, 
\eqref{eferw7er3h06-13b} yields
$\boldsymbol{Z}=\KKK\cap\boldsymbol{Z}=
\bigcap_{\mathsf{a}\in\gra A}\bigcap_{\mathsf{b}\in\gra B}
\boldsymbol{H}_{\mathsf{a},\mathsf{b}}$\,. 

\ref{pgre76g12-12iv}: \cite[Proposition~2.4]{Siop14}.
\end{proof}

\begin{remark}
\label{r:KK}
As will be seen in Remark~\ref{rferw7er3h07-12b}, the subspace $\KKK$ 
in Proposition~\ref{pgre76g12-12} adds flexibility to the 
implementation of our proposed algorithms when certain 
structures are present in the problem formulation. 
\end{remark}

\begin{proposition}
\label{paHE5j-7h502}
Problem~\ref{prob:2} is a special case of Problem~\ref{prob:1}.
\end{proposition}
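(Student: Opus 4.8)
The plan is to realize Problem~\ref{prob:2} inside the format of Problem~\ref{prob:1} by passing to direct-sum Hilbert spaces and building block-diagonal monotone operators that absorb the vectors $z_i^*$ and $r_k$ as translations. First I would set $\HH=\bigoplus_{i\in I}\HH_i$ and $\GG=\bigoplus_{k\in K}\GG_k$, which are again real Hilbert spaces, and define
$L\colon\HH\to\GG\colon(x_i)_{i\in I}\mapsto\big(\sum_{i\in I}L_{ki}x_i\big)_{k\in K}$.
Since each $L_{ki}$ is bounded and linear, $L$ is bounded and linear, and a short computation with the direct-sum scalar product gives the adjoint $L^*\colon(v_k^*)_{k\in K}\mapsto\big(\sum_{k\in K}L_{ki}^*v_k^*\big)_{i\in I}$.

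Next I would define $A\colon\HH\to 2^{\HH}$ to be the block-diagonal operator whose $i$th block is $x_i\mapsto A_ix_i-z_i^*$, and $B\colon\GG\to 2^{\GG}$ to be the block-diagonal operator whose $k$th block is $y_k\mapsto B_k(y_k-r_k)$. Here $z_i^*$ enters as a shift of $A_i$ in its range and $r_k$ as a shift of $B_k$ in its domain. Because a translation of a maximally monotone operator in either its domain or its range is again maximally monotone, and because a Cartesian product of maximally monotone operators acting on a Hilbert direct sum is maximally monotone, both $A$ and $B$ are maximally monotone. This supplies all the data $(\HH,\GG,A,B,L)$ required by Problem~\ref{prob:1}.

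It then remains to match the three objects of the two problems componentwise. Evaluating $0\in A\overline{x}+L^*BL\overline{x}$ in the $i$th block and expanding $L$ and $L^*$ reproduces \eqref{eferw7er3h02-26p} directly, since the $i$th block of $A\overline{x}$ is $A_i\overline{x}_i-z_i^*$ and the $i$th block of $L^*BL\overline{x}$ is $\sum_{k\in K}L_{ki}^*B_k\big(\sum_{j\in I}L_{kj}\overline{x}_j-r_k\big)$. For the dual \eqref{e:dual} and the Kuhn-Tucker set \eqref{e:2013-05-21a} I would first record the inverses: from $A_i\overline{x}_i-z_i^*$ one obtains $\big(A^{-1}u\big)_i=A_i^{-1}(u_i+z_i^*)$, and from $B_k(y_k-r_k)$ one obtains $\big(B^{-1}w\big)_k=B_k^{-1}(w_k)+r_k$. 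Substituting these, together with the formulas for $L$ and $L^*$, into \eqref{e:dual} and \eqref{e:2013-05-21a} and reading off each block yields exactly \eqref{eferw7er3h02-26d} and \eqref{eferw7er3h02-26k}.

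The main difficulty here is bookkeeping rather than conceptual: one must keep the two translations straight---$z_i^*$ is a range shift of $A_i$ whereas $r_k$ is a domain shift of $B_k$---and track how each shift propagates through the inverse and adjoint operations so that it lands on the correct side of the inclusions. The one place this genuinely matters is the computation of $A^{-1}$ and $B^{-1}$, where an inverted range shift reappears additively inside $A_i^{-1}(\,\cdot\,+z_i^*)$ and an inverted domain shift reappears as the additive term $+\,r_k$ in $B_k^{-1}(\cdot)+r_k$; once these are correctly identified, the remaining verifications are routine componentwise identities, and the identification of $\boldsymbol{Z}$ in \eqref{e:2013-05-21a} with that in \eqref{eferw7er3h02-26k} completes the reduction.
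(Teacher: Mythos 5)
Your proposal is correct and matches the paper's own proof essentially verbatim: the paper uses exactly the same product-space construction, defining $A$ with blocks $x_i\mapsto -z_i^*+A_ix_i$ and $B$ with blocks $y_k\mapsto B_k(y_k-r_k)$, and then identifies the primal, dual, and Kuhn-Tucker objects componentwise. Your additional bookkeeping (maximal monotonicity under translation and Cartesian products, and the explicit formulas for $A^{-1}$ and $B^{-1}$) is exactly the verification the paper leaves implicit, and it is carried out correctly.
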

\begin{proof}
Let us set 
\begin{equation}
\label{eccx23gr28s}
\begin{cases}
\HH=\bigoplus_{i\in I}\HH_i\\ 
\GG=\bigoplus_{k\in K}\GG_k\\ 
L\colon\HH\to\GG\colon (x_i)_{i\in I}\mapsto
\big(\sum_{i\in I}L_{ki}x_i\big)_{k\in K}\\
A\colon\HH\to 2^{\HH}\colon(x_i)_{i\in I}\mapsto
\cart_{\!i\in I}(-z^*_i+A_ix_i)\\
B\colon\GG\to 2^{\GG}\colon(y_k)_{k\in K}\mapsto
\cart_{\!k\in K}B_k(y_k-r_k).
\end{cases}
\end{equation}
Then
\begin{equation}
\label{eccx23gr28m}
L^*\colon\GG\to\HH\colon(y_k)_{k\in K}\mapsto
\bigg(\sum_{k\in K}L^*_{ki}y_k\bigg)_{i\in I}.
\end{equation}
With these settings, \eqref{e:primal}, \eqref{e:dual},
and~\eqref{e:2013-05-21a} are respectively equivalent
to~\eqref{eferw7er3h02-26p}, \eqref{eferw7er3h02-26d}, and~\eqref{eferw7er3h02-26k}.
\end{proof}

\subsection{A Fej\'er monotone algorithm}
\label{sec:fejer}

We first recall some basic results concerning Fej\'er monotone sequences.

\begin{proposition}{\rm\cite{Eoop01}}
\label{p:1999}
Let $\KKK$ be a real Hilbert space, let $\boldsymbol{C}$ be a
nonempty closed convex subset of $\KKK$, and let 
$\boldsymbol{x}_0\in\KKK$. Suppose that
\begin{equation}
\label{e:2013-05-20e}
\begin{array}{l}
\text{for}\;n=0,1,\ldots\\
\left\lfloor
\begin{array}{l}
\text{$\boldsymbol{t}_n^*\in\KKK$ and $\eta_n\in\RR$ are
such that $\boldsymbol{C}\subset\boldsymbol{H}_n
=\menge{\boldsymbol{x}\in\KKK}
{\scal{\boldsymbol{x}}{\boldsymbol{t}_n^*}\leq\eta_n}$}\\
\lambda_n\in\left]0,2\right[\\
\boldsymbol{x}_{n+1}=\boldsymbol{x}_n+
\lambda_n(P_{\boldsymbol{H}_n}\boldsymbol{x}_n-\boldsymbol{x}_n).
\end{array}
\right.\\
\end{array}
\end{equation}
Then the following hold:
\begin{enumerate}
\item
\label{p:1999i}
$(\boldsymbol{x}_n)_{n\in\NN}$ is Fej\'er monotone with 
respect to $\boldsymbol{C}$:
$(\forall\boldsymbol{z}\in\boldsymbol{C})(\forall n\in\NN)$
$\|\boldsymbol{x}_{n+1}-\boldsymbol{z}\|\leq
\|\boldsymbol{x}_n-\boldsymbol{z}\|$.
\item
\label{p:1999ii}
$\sum_{n\in\NN}\lambda_n(2-\lambda_n)\|P_{\boldsymbol{H}_n}
\boldsymbol{x}_{n}-\boldsymbol{x}_n\|^2<\pinf$.
\item
\label{p:1999iii}
Suppose that, for every $\boldsymbol{x}\in\KKK$ and every 
strictly increasing sequence $(q_n)_{n\in\NN}$ in $\NN$, 
$\boldsymbol{x}_{q_n}\weakly\boldsymbol{x}$
$\Rightarrow$ $\boldsymbol{x}\in\boldsymbol{C}$.
Then $(\boldsymbol{x}_n)_{n\in\NN}$ converges weakly to a point in
$\boldsymbol{C}$.
\end{enumerate}
\end{proposition}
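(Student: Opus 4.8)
The plan is to treat \ref{p:1999i} and \ref{p:1999ii} together from a single quantitative estimate, and then to deduce \ref{p:1999iii} from Fej\'er monotonicity by an Opial-type argument. For the first two parts, fix $\boldsymbol{z}\in\boldsymbol{C}$ and abbreviate $\boldsymbol{p}_n=P_{\boldsymbol{H}_n}\boldsymbol{x}_n$. Since $\boldsymbol{C}\subset\boldsymbol{H}_n$ we have $\boldsymbol{z}\in\boldsymbol{H}_n$, so the variational characterization of the projection onto the convex set $\boldsymbol{H}_n$ yields $\scal{\boldsymbol{z}-\boldsymbol{p}_n}{\boldsymbol{x}_n-\boldsymbol{p}_n}\leq 0$. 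First I would expand $\|\boldsymbol{x}_{n+1}-\boldsymbol{z}\|^2=\|(\boldsymbol{x}_n-\boldsymbol{z})+\lambda_n(\boldsymbol{p}_n-\boldsymbol{x}_n)\|^2$, split the cross term via $\boldsymbol{x}_n-\boldsymbol{z}=(\boldsymbol{x}_n-\boldsymbol{p}_n)+(\boldsymbol{p}_n-\boldsymbol{z})$, and invoke the projection inequality to bound $\scal{\boldsymbol{x}_n-\boldsymbol{z}}{\boldsymbol{p}_n-\boldsymbol{x}_n}\leq-\|\boldsymbol{p}_n-\boldsymbol{x}_n\|^2$. This produces the decrease estimate $\|\boldsymbol{x}_{n+1}-\boldsymbol{z}\|^2\leq\|\boldsymbol{x}_n-\boldsymbol{z}\|^2-\lambda_n(2-\lambda_n)\|\boldsymbol{p}_n-\boldsymbol{x}_n\|^2$. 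Because $\lambda_n\in\left]0,2\right[$ forces $\lambda_n(2-\lambda_n)>0$, this immediately gives the Fej\'er inequality of \ref{p:1999i}; summing it telescopically over $n$ and discarding the nonnegative tail gives the summability of \ref{p:1999ii}, with total mass bounded by $\|\boldsymbol{x}_0-\boldsymbol{z}\|^2$.

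For \ref{p:1999iii} I would first record the two standard consequences of Fej\'er monotonicity: for each $\boldsymbol{z}\in\boldsymbol{C}$ the sequence $(\|\boldsymbol{x}_n-\boldsymbol{z}\|)_{n\in\NN}$ is nonincreasing, hence convergent, and in particular $(\boldsymbol{x}_n)_{n\in\NN}$ is bounded. Boundedness guarantees at least one weak cluster point, and the hypothesis of \ref{p:1999iii} ensures that every weak cluster point lies in $\boldsymbol{C}$. It then remains to prove uniqueness of the weak cluster point, for which I would use the classical Opial computation: if $\boldsymbol{x}$ and $\boldsymbol{y}$ are weak cluster points, both in $\boldsymbol{C}$, then $(\|\boldsymbol{x}_n-\boldsymbol{x}\|)_{n\in\NN}$ and $(\|\boldsymbol{x}_n-\boldsymbol{y}\|)_{n\in\NN}$ both converge, so the difference of their squares converges; since $\|\boldsymbol{x}_n-\boldsymbol{x}\|^2-\|\boldsymbol{x}_n-\boldsymbol{y}\|^2=2\scal{\boldsymbol{x}_n}{\boldsymbol{y}-\boldsymbol{x}}+\|\boldsymbol{x}\|^2-\|\boldsymbol{y}\|^2$, the scalars $\scal{\boldsymbol{x}_n}{\boldsymbol{y}-\boldsymbol{x}}$ converge. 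Passing to subsequences converging weakly to $\boldsymbol{x}$ and to $\boldsymbol{y}$ and equating the two resulting limits gives $\scal{\boldsymbol{x}-\boldsymbol{y}}{\boldsymbol{y}-\boldsymbol{x}}=0$, whence $\boldsymbol{x}=\boldsymbol{y}$. A bounded sequence with a unique weak cluster point converges weakly to that point, which lies in $\boldsymbol{C}$.

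I do not expect a genuine obstacle here: the entire argument rests on the one projection inequality and on the elementary Opial identity, and the statement is in any case quoted from \cite{Eoop01}. The only point demanding care is organizational. One must apply the hypothesis of \ref{p:1999iii} in precisely the stated form, namely to \emph{arbitrary} strictly increasing index sequences $(q_n)_{n\in\NN}$, so that it remains available for the two subsequences extracted in the uniqueness step; and one must establish convergence of $(\|\boldsymbol{x}_n-\boldsymbol{z}\|)_{n\in\NN}$ for \emph{every} $\boldsymbol{z}\in\boldsymbol{C}$ before specializing to $\boldsymbol{z}=\boldsymbol{x}$ and $\boldsymbol{z}=\boldsymbol{y}$. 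With these precautions the three parts follow in sequence as above.
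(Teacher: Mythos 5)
Your proof is correct. The paper offers no argument of its own for Proposition~\ref{p:1999}---it quotes the result from \cite{Eoop01}---and your two-step argument (the projection inequality yielding the decrease estimate $\|\boldsymbol{x}_{n+1}-\boldsymbol{z}\|^2\leq\|\boldsymbol{x}_n-\boldsymbol{z}\|^2-\lambda_n(2-\lambda_n)\|P_{\boldsymbol{H}_n}\boldsymbol{x}_n-\boldsymbol{x}_n\|^2$, followed by the Opial-type uniqueness argument for weak cluster points) is precisely the classical proof underlying that citation, so it matches the intended approach.
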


\begin{algorithm}
\label{algo:1}
Consider the setting of Problem~\ref{prob:1} and let $\KKK$ 
be a closed vector subspace of $\HH\oplus\GG$ such that
$\boldsymbol{Z}\subset\KKK$. Let $\varepsilon\in\zeroun$,
let $(x_0,v_0^*)\in\KKK$, and let
$(\lambda_n)_{n\in\NN}\in[\varepsilon,2-\varepsilon]^{\NN}$.
Iterate
\begin{equation}
\label{e:2013-05-22e}
\begin{array}{l}
\text{for}\;n=0,1,\ldots\\
\left\lfloor
\begin{array}{l}
(a_n,a_n^*)\in\gra A\\
(b_n,b_n^*)\in\gra B\\
(t^*_n,t_n)=P_{\KKK}(a^*_{n}+L^*b^*_{n},b_n-La_n)\\
\tau_n={\|t^*_n\|^2+\|t_n\|^2}\\
\text{if}\;\tau_n>0\\
\left\lfloor
\begin{array}{l}
\theta_n = \Frac{\lambda_n}{\tau_n} \,
\text{\rm max} \Bigbraces{
  0,\scal{x_n}{t^*_n}+\scal{t_n}{v^*_n}
  -\scal{a_n}{a^*_n}-\scal{b_n}{b^*_n}
} 
\end{array} 
\right. \\
\text{else}\; \theta_n = 0 \\
x_{n+1}=x_n-\theta_n t^*_n\\
v^*_{n+1}=v^*_n-\theta_nt_n.
\end{array}
\right.\\[4mm]
\end{array}
\end{equation}
\end{algorithm}

\begin{proposition}
\label{p:2013-05-22}
Consider the setting of Problem~\ref{prob:1} and 
Algorithm~\ref{algo:1}, and suppose that
$\mathscr{P}\neq\emp$. Then the following hold:
\begin{enumerate}
\item
\label{p:2013-05-22i}
$(x_n,v_n^*)_{n\in\NN}$ is a sequence in $\KKK$ which is
Fej\'er monotone with respect to $\boldsymbol{Z}$.
\item
\label{p:2013-05-22iii}
$\sum_{n\in\NN}\|x_{n+1}-x_n\|^2<\pinf$ and
$\sum_{n\in\NN}\|v^*_{n+1}-v^*_n\|^2<\pinf$.
\item
\label{p:2013-05-22iv}
Suppose that the sequences $(a_n)_{n\in\NN}$, $(b_n)_{n\in\NN}$, 
$(a^*_n)_{n\in\NN}$, and $(b^*_n)_{n\in\NN}$ are bounded. Then
\begin{equation}
\label{eBnd6Rfdr01a}
\varlimsup\big(\scal{x_n-a_n}{a_n^*+L^*v_n^*}
+\scal{Lx_n-b_n}{b_n^*-v_n^*}\big)\leq 0.
\end{equation}
\item
\label{p:2013-05-22v}
Suppose that, for every $(x,v^*)\in\KKK$ and for every 
strictly increasing sequence $(q_n)_{n\in\NN}$ in $\NN$, 
\begin{equation}
\label{e:2013-11-27a}
\big[\:x_{q_n}\weakly x\;\;\text{and}\;\;v^*_{q_n}\weakly v^*\:\big]
\quad\Rightarrow\quad (x,v^*)\in\boldsymbol{Z}.
\end{equation}
Then $(x_n)_{n\in\NN}$ converges weakly to a point 
$\overline{x}\in\mathscr{P}$, $(v_n^*)_{n\in\NN}$ converges 
weakly to a point $\overline{v}^*\in\mathscr{D}$, and 
$(\overline{x},\overline{v}^*)\in\boldsymbol{Z}$.
\end{enumerate}
\end{proposition}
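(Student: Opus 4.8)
The plan is to recognize Algorithm~\ref{algo:1} as a concrete realization of the abstract Fej\'er scheme of Proposition~\ref{p:1999}, carried out in the Hilbert space $\KKK$ with target set $\boldsymbol{C}=\boldsymbol{Z}$. To set this up I would write $\boldsymbol{x}_n=(x_n,v_n^*)$, $\boldsymbol{s}_n^*=(a_n^*+L^*b_n^*,\,b_n-La_n)$, $\boldsymbol{t}_n^*=(t_n^*,t_n)=P_{\KKK}\boldsymbol{s}_n^*$, and $\eta_n=\scal{a_n}{a_n^*}+\scal{b_n}{b_n^*}$, so that $\tau_n=\|\boldsymbol{t}_n^*\|^2$ and the half-space $\boldsymbol{H}_n=\boldsymbol{H}_{\mathsf{a}_n,\mathsf{b}_n}$ of Proposition~\ref{pgre76g12-12}\ref{pgre76g12-12iii} coincides with the set appearing in Proposition~\ref{p:1999}. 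Since $\mathscr{P}\neq\emp$, Proposition~\ref{pgre76g12-12}\ref{pgre76g12-12ii} gives $\boldsymbol{Z}\neq\emp$, Proposition~\ref{pgre76g12-12}\ref{pgre76g12-12i} makes it closed and convex, and Proposition~\ref{pgre76g12-12}\ref{pgre76g12-12iiib} gives $\boldsymbol{Z}\subset\boldsymbol{H}_n$ for every $n$. The one computation to check is that $\boldsymbol{x}_{n+1}=\boldsymbol{x}_n-\theta_n\boldsymbol{t}_n^*$ is exactly the relaxed projection $\boldsymbol{x}_n+\lambda_n(P_{\boldsymbol{H}_n}\boldsymbol{x}_n-\boldsymbol{x}_n)$; this follows from the closed form of the projection onto a half-space in $\KKK$, using $\boldsymbol{t}_n^*\in\KKK$, $\boldsymbol{x}_n\in\KKK$, and $\scal{\boldsymbol{x}_n}{\boldsymbol{t}_n^*}=\scal{\boldsymbol{x}_n}{\boldsymbol{s}_n^*}$. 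Because $\KKK$ is a subspace, $\boldsymbol{t}_n^*\in\KKK$, and $(x_0,v_0^*)\in\KKK$, the iterates stay in $\KKK$, and \ref{p:2013-05-22i} is then immediate from Proposition~\ref{p:1999}\ref{p:1999i}.

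For \ref{p:2013-05-22iii} I would start from the estimate $\sum_n\lambda_n(2-\lambda_n)\|P_{\boldsymbol{H}_n}\boldsymbol{x}_n-\boldsymbol{x}_n\|^2<\pinf$ of Proposition~\ref{p:1999}\ref{p:1999ii}. Since $\lambda_n\in[\varepsilon,2-\varepsilon]$, one has $\lambda_n(2-\lambda_n)\geq\varepsilon(2-\varepsilon)$ while $\|\boldsymbol{x}_{n+1}-\boldsymbol{x}_n\|^2=\lambda_n^2\|P_{\boldsymbol{H}_n}\boldsymbol{x}_n-\boldsymbol{x}_n\|^2$, so $\|\boldsymbol{x}_{n+1}-\boldsymbol{x}_n\|^2$ is bounded above by a fixed multiple of the summable terms and is therefore itself summable. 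Splitting $\|\boldsymbol{x}_{n+1}-\boldsymbol{x}_n\|^2=\|x_{n+1}-x_n\|^2+\|v_{n+1}^*-v_n^*\|^2$ yields both claimed sums.

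The heart of the argument is \ref{p:2013-05-22iv}, which I expect to be the main obstacle. First I would establish the algebraic identity that the quantity $\Lambda_n:=\scal{x_n-a_n}{a_n^*+L^*v_n^*}+\scal{Lx_n-b_n}{b_n^*-v_n^*}$ equals $\ell_n:=\scal{\boldsymbol{x}_n}{\boldsymbol{s}_n^*}-\eta_n$: expanding the two inner products and using $\scal{x_n}{L^*v_n^*}=\scal{Lx_n}{v_n^*}$ cancels the two terms coupling $x_n$ and $v_n^*$ through $L$, and regrouping leaves exactly $\ell_n$. Since $\boldsymbol{x}_n\in\KKK$ and $\boldsymbol{t}_n^*=P_\KKK\boldsymbol{s}_n^*$, we also have $\scal{\boldsymbol{x}_n}{\boldsymbol{s}_n^*}=\scal{\boldsymbol{x}_n}{\boldsymbol{t}_n^*}$, so $\ell_n$ is precisely the signed quantity driving $\theta_n$. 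On indices where $\tau_n>0$ the definition of $\theta_n$ gives $\max\{0,\ell_n\}=\theta_n\tau_n/\lambda_n$, whence $(\max\{0,\ell_n\})^2=(\tau_n/\lambda_n^2)\|\boldsymbol{x}_{n+1}-\boldsymbol{x}_n\|^2$. Boundedness of $(a_n)$, $(b_n)$, $(a_n^*)$, $(b_n^*)$ together with boundedness of $L$ bounds $\|\boldsymbol{s}_n^*\|$, and nonexpansiveness of $P_\KKK$ then bounds $\tau_n=\|\boldsymbol{t}_n^*\|^2\leq\|\boldsymbol{s}_n^*\|^2$ by a constant $\tau$; combined with $\lambda_n\geq\varepsilon$ and \ref{p:2013-05-22iii} this forces $\max\{0,\ell_n\}\to 0$ along those indices. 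The one subtlety is the degenerate case $\tau_n=0$: there $\boldsymbol{t}_n^*=\boldsymbol{0}$, so $\boldsymbol{H}_n=\menge{\boldsymbol{x}\in\KKK}{0\leq\eta_n}$, and since the nonempty set $\boldsymbol{Z}$ is contained in $\boldsymbol{H}_n$ we must have $\eta_n\geq 0$, giving $\ell_n=-\eta_n\leq 0$ (and $\boldsymbol{x}_{n+1}=\boldsymbol{x}_n$), so the estimate holds trivially. Hence $\max\{0,\ell_n\}\to 0$ for all $n$, and since $\Lambda_n=\ell_n\leq\max\{0,\ell_n\}$ we conclude $\varlimsup\Lambda_n\leq 0$.

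Finally, for \ref{p:2013-05-22v} I would invoke Proposition~\ref{p:1999}\ref{p:1999iii} in $\KKK$ with $\boldsymbol{C}=\boldsymbol{Z}$. A weakly convergent subsequence $\boldsymbol{x}_{q_n}\weakly(x,v^*)$ in $\KKK$ is equivalent to $x_{q_n}\weakly x$ and $v_{q_n}^*\weakly v^*$, and the limit lies in $\KKK$ because $\KKK$, being closed and convex, is weakly closed; hypothesis \eqref{e:2013-11-27a} is therefore exactly the condition required by Proposition~\ref{p:1999}\ref{p:1999iii}, which yields weak convergence of $(\boldsymbol{x}_n)_{n\in\NN}$ to some $(\overline{x},\overline{v}^*)\in\boldsymbol{Z}$. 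In particular $x_n\weakly\overline{x}$ and $v_n^*\weakly\overline{v}^*$, and Proposition~\ref{pgre76g12-12}\ref{pgre76g12-12i}, which gives $\boldsymbol{Z}\subset\mathscr{P}\times\mathscr{D}$, then places $\overline{x}\in\mathscr{P}$ and $\overline{v}^*\in\mathscr{D}$, completing the proof.
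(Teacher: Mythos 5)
Your proof is correct and follows essentially the same route as the paper: embed Algorithm~\ref{algo:1} into the abstract Fej\'er scheme of Proposition~\ref{p:1999} in $\KKK$ with $\boldsymbol{C}=\boldsymbol{Z}$ via Proposition~\ref{pgre76g12-12}, derive \ref{p:2013-05-22iii} from the summability in Proposition~\ref{p:1999}\ref{p:1999ii}, prove \ref{p:2013-05-22iv} through the identity $\scal{x_n-a_n}{a_n^*+L^*v_n^*}+\scal{Lx_n-b_n}{b_n^*-v_n^*}=\scal{\boldsymbol{x}_n}{\boldsymbol{t}_n^*}-\eta_n$ together with boundedness of $(\tau_n)_{n\in\NN}$, and obtain \ref{p:2013-05-22v} from Proposition~\ref{p:1999}\ref{p:1999iii}. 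In fact, your explicit treatment of the degenerate case $\tau_n=0$ (showing $\eta_n\geq 0$ from $\emp\neq\boldsymbol{Z}\subset\boldsymbol{H}_n$) spells out a detail the paper leaves implicit in its chain of equalities.
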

\begin{proof}
Parts \ref{pgre76g12-12i} and~\ref{pgre76g12-12ii} of
Proposition~\ref{pgre76g12-12} assert that $\boldsymbol{Z}$ is a
nonempty, closed, and convex subset of $\KKK$. Now set 
\begin{multline}
\label{eferw7er3h06-13t}
(\forall n\in\NN)\quad \boldsymbol{x}_n=(x_n,v_n^*),\;\;
\boldsymbol{s}_n^*=(s_n^*,s_n)=(a^*_n+L^*b^*_n,b_n-La_n),\;\;
\boldsymbol{t}_n^*=(t_n^*,t_n),\\
\eta_n=\scal{a_n}{a^*_n}+\scal{b_n}{b^*_n},\;\;
\text{and}\quad
\boldsymbol{H}_n=\menge{\boldsymbol{x}\in\KKK}
{\scal{\boldsymbol{x}}{\boldsymbol{t}_n^*}\leq\eta_n}. 
\end{multline}
Then it follows from \eqref{e:2013-05-22e} and
Proposition~\ref{pgre76g12-12}\ref{pgre76g12-12iiib} that 
$(\forall n\in\NN)$ $\boldsymbol{Z}\subset\boldsymbol{H}_n$. 
Set $(\forall n\in\NN)$ 
$\Delta_n=\sqrt{\tau_n}\theta_n/\lambda_n$. 
Using \cite[Example~28.16(iii)]{Livre1}, we get
\begin{equation}
\label{eferw7er3h06-13c}
(\forall n\in\NN)\quad P_{\boldsymbol{H}_n}\boldsymbol{x}_{n}=
\begin{cases}
\boldsymbol{x}_n+
\Frac{\eta_n-\scal{\boldsymbol{x}_n}{\boldsymbol{t}^*_n}}
{\|\boldsymbol{t}^*_n\|^2}\boldsymbol{t}^*_n,
&\text{if}\;\;\boldsymbol{t}_n^*\neq
\boldsymbol{0}\;\text{and}\;
\scal{\boldsymbol{x}_n}{\boldsymbol{t}^*_n}>\eta_n;\\
\boldsymbol{x}_n,&\text{otherwise.}
\end{cases}
\end{equation}
Hence, 
\begin{equation}
\label{e:2013-11-29a}
(\forall n\in\NN)\quad\Delta_n=
\|P_{\boldsymbol{H}_n}\boldsymbol{x}_{n}-\boldsymbol{x}_n\|
\quad\text{and}\quad\boldsymbol{x}_{n+1}=\boldsymbol{x}_n+
\lambda_n(P_{\boldsymbol{H}_n}\boldsymbol{x}_n-\boldsymbol{x}_n).
\end{equation}
Therefore, we derive from 
Proposition~\ref{p:1999}\ref{p:1999ii} that
\begin{equation}
\label{eferw7er3h06-14a}
\sum_{n\in\NN}\Delta_n^2<\pinf.
\end{equation}

\ref{p:2013-05-22i}: This follows from \eqref{e:2013-11-29a} and
Proposition~\ref{p:1999}\ref{p:1999i}.

\ref{p:2013-05-22iii}: 
We derive from \eqref{e:2013-05-22e} that 
\begin{equation}
(\forall n\in\NN)\quad
\|x_{n+1}-x_n\|^2+\|v^*_{n+1}-v^*_n\|^2
=\theta_n^2\tau_n=\lambda_n^2\Delta_n^2\leq 4\Delta_n^2.
\end{equation}
Hence, the claim follows from \eqref{eferw7er3h06-14a}.

\ref{p:2013-05-22iv}: 
Since $\|P_{\KKK}\|\leq 1$, \eqref{e:2013-05-22e} and
\eqref{eferw7er3h06-13t} yield
\begin{align}
\label{eferw7er3h03-01c}
(\forall n\in\NN)\quad\tau_n
&=\|\boldsymbol{t}^*_n\|^2\nonumber\\
&\leq\|\boldsymbol{s}^*_n\|^2\nonumber\\
&=\|a_n^*+L^*b_n^*\|^2+\|La_n-b_n\|^2\nonumber\\
&\leq 2\big(\|a^*_n\|^2+\|L\|^2\,\|b^*_n\|^2+
\|L\|^2\|a_n\|^2+\|b_n\|^2\big).
\end{align}
Hence, $(\tau_n)_{n\in\NN}$ is bounded. Therefore, since
\eqref{eferw7er3h06-14a} implies that $\Delta_n\to 0$ and since
$(\boldsymbol{x}_n)_{n\in\NN}$ lies in $\KKK$, we obtain
\begin{align}
\label{eferw7er3h03-01d}
&\hskip 0mm
(\forall n\in\NN)\quad
\text{\rm max}\big\{0,(\scal{x_n}{s^*_n}+\scal{s_n}{v^*_n}
-\scal{a_n}{a^*_n}-\scal{b_n}{b^*_n})\big\}
\nonumber\\
&\hskip 25mm=\text{\rm max}
\big\{0,(\scal{\boldsymbol{x}_n}{\boldsymbol{s}^*_n}
-\scal{a_n}{a^*_n}-\scal{b_n}{b^*_n})\big\}
\nonumber\\
&\hskip 25mm=\text{\rm max}\big\{0,(\scal{\boldsymbol{x}_n}
{P_{\KKK}\boldsymbol{s}^*_n}-\scal{a_n}{a^*_n}-
\scal{b_n}{b^*_n})\big\}
\nonumber\\
&\hskip 25mm=
\text{\rm max}\big\{0,(\scal{x_n}{t^*_n}+\scal{t_n}{v^*_n}
-\scal{a_n}{a^*_n}-\scal{b_n}{b^*_n})\big\}\nonumber\\
&\hskip 25mm=\sqrt{\tau_n}\Delta_n\nonumber\\
&\hskip 25mm\to 0. 
\end{align}
Consequently,
\begin{multline}
\label{eferw7er3h03-01b}
\varlimsup\big(\scal{x_n-a_n}{a_n^*+L^*v_n^*}
+\scal{Lx_n-b_n}{b_n^*-v_n^*}\big)\\
=\varlimsup\big(\scal{x_n}{s^*_n}+\scal{s_n}{v^*_n}
-\scal{a_n}{a^*_n}-\scal{b_n}{b^*_n}\big)
\leq 0.
\end{multline}

\ref{p:2013-05-22v}: This follows from \eqref{e:2013-11-29a} and
Proposition~\ref{p:1999}\ref{p:1999iii}.
\end{proof}

\subsection{An Haugazeau-like algorithm}
\label{sec:haug}

Algorithm~\ref{algo:1} produces sequences that converge weakly to 
some undetermined point in $\boldsymbol{Z}$. We now describe
an algorithm that provides strong convergence to the point in 
$\boldsymbol{Z}$ closest to some reference point 
$(x_0,v_0^*)\in\HH\oplus\GG$. This approach relies on a geometric
construction going back to \cite{Haug68} and was used in 
the context of Problem~\ref{prob:2} in \cite{Nfao15}.

Let $(\boldsymbol{x},\boldsymbol{y},\boldsymbol{z})\in\KKK^3$ be
an ordered triplet from a real Hilbert space $\KKK$. We define
\begin{equation}
H(\boldsymbol{x},\boldsymbol{y})=\menge{\boldsymbol{h}\in\KKK}
{\scal{\boldsymbol{h}-\boldsymbol{y}}
{\boldsymbol{x}-\boldsymbol{y}}\leq 0}
\end{equation}
and, if the set $H(\boldsymbol{x},\boldsymbol{y})\cap
H(\boldsymbol{y},\boldsymbol{z})$ is nonempty, we denote by
$Q(\boldsymbol{x},\boldsymbol{y},\boldsymbol{z})$ the 
projection of $\boldsymbol{x}$ onto it. The principle 
of the algorithm to project a point $\boldsymbol{x}_0\in\KKK$ onto 
a nonempty closed convex set $\boldsymbol{C}\subset\KKK$ is to 
use at iteration $n$ the current iterate $\boldsymbol{x}_n$ to 
construct an outer approximation to $\boldsymbol{C}$ of the form 
$H(\boldsymbol{x}_0,\boldsymbol{x}_n)\cap 
H(\boldsymbol{x}_n,\boldsymbol{x}_{n+1/2})$; the update is then 
computed as the projection of $\boldsymbol{x}_0$ onto this intersection,
i.e., $\boldsymbol{x}_{n+1}=Q(\boldsymbol{x}_0,\boldsymbol{x}_n,
\boldsymbol{x}_{n+1/2})$. As the following lemma from~\cite{Haug68} shows, this
last computation is straightforward; an alternative derivation may be found 
in~\cite[Corollary~28.21]{Livre1}.

\begin{lemma}{\rm(\cite[Th\'eor\`eme~3-1]{Haug68})}
\label{l:haugazeauy}
Let $\KKK$ be a real Hilbert space, let
$(\boldsymbol{x},\boldsymbol{y},\boldsymbol{z})\in\KKK^3$,
and set $\boldsymbol{R}=H(\boldsymbol{x},\boldsymbol{y})\cap
H(\boldsymbol{y},\boldsymbol{z})$. Further, set
$\chi=\scal{\boldsymbol{x}-\boldsymbol{y}}{\boldsymbol{y}-
\boldsymbol{z}}$, $\mu=\|\boldsymbol{x}-\boldsymbol{y}\|^2$,
$\nu=\|\boldsymbol{y}-\boldsymbol{z}\|^2$, and
$\rho=\mu\nu-\chi^2$.
Then exactly one of the following holds:
\begin{enumerate}
\item
\label{c:haugazeaui}
$\rho=0$ and $\chi<0$, in which case $\boldsymbol{R}=\emp$.
\item 
\label{c:haugazeauii}
\emph{[}$\,\rho=0$ and $\chi\geq 0\,$\emph{]} or 
$\rho>0$, in which case $\boldsymbol{R}\neq\emp$ and 
\begin{equation}
\label{e:santiago2014-01-05}
Q(\boldsymbol{x},\boldsymbol{y},\boldsymbol{z})=
\begin{cases}
\boldsymbol{z}, &\!\text{if}\;\rho=0\;\text{and}\;
\chi\geq 0;\\[+0mm]
\displaystyle
\boldsymbol{x}+(1+\chi/\nu)
(\boldsymbol{z}-\boldsymbol{y}), 
&\!\text{if}\;\rho>0\;\text{and}\;
\chi\nu\geq\rho;\\
\displaystyle \boldsymbol{y}+(\nu/\rho)
\big(\chi(\boldsymbol{x}-\boldsymbol{y})
+\mu(\boldsymbol{z}-\boldsymbol{y})\big), 
&\!\text{if}\;\rho>0\;\text{and}\;\chi\nu<\rho.
\end{cases}
\end{equation}
\end{enumerate}
\end{lemma}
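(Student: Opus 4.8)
The plan is to reduce everything to the variational characterization of the projection onto a closed convex set and to the elementary geometry of the two half-spaces $H(\boldsymbol{x},\boldsymbol{y})$ and $H(\boldsymbol{y},\boldsymbol{z})$, whose outward normal vectors are $\boldsymbol{n}_1=\boldsymbol{x}-\boldsymbol{y}$ and $\boldsymbol{n}_2=\boldsymbol{y}-\boldsymbol{z}$. First I would record the preliminary facts that drive the entire case split: by Cauchy--Schwarz $\chi^2\leq\mu\nu$, so $\rho\geq0$, with $\rho=0$ exactly when $\boldsymbol{n}_1$ and $\boldsymbol{n}_2$ are collinear; moreover $\rho>0$ forces $\mu>0$ and $\nu>0$, so no denominator in \eqref{e:santiago2014-01-05} degenerates. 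I would also compute once and for all the inner products $\scal{\boldsymbol{z}-\boldsymbol{y}}{\boldsymbol{n}_1}=-\chi$, $\scal{\boldsymbol{x}-\boldsymbol{z}}{\boldsymbol{n}_2}=\chi+\nu$, and $\scal{\boldsymbol{n}_1}{\boldsymbol{n}_2}=\chi$; these are essentially the only computations needed below.

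Next I would settle the dichotomy. Since $\rho\geq0$ always, the four listed conditions ($\rho=0,\chi<0$; $\rho=0,\chi\geq0$; $\rho>0,\chi\nu\geq\rho$; $\rho>0,\chi\nu<\rho$) are mutually exclusive and exhaustive, so it suffices to prove $\boldsymbol{R}=\emp$ in the first and to exhibit the stated projection (hence $\boldsymbol{R}\neq\emp$) in the other three. For the first, collinearity with $\chi<0$ means the normals point in opposite directions; rewriting both membership inequalities in terms of $\scal{\cdot}{\boldsymbol{n}_1}$ shows that $\boldsymbol{h}\in\boldsymbol{R}$ would force $\scal{\boldsymbol{z}}{\boldsymbol{n}_1}\leq\scal{\boldsymbol{h}}{\boldsymbol{n}_1}\leq\scal{\boldsymbol{y}}{\boldsymbol{n}_1}$, i.e. $\scal{\boldsymbol{n}_2}{\boldsymbol{n}_1}=\chi\geq0$, a contradiction; hence $\boldsymbol{R}=\emp$. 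In the remaining three cases nonemptiness comes for free, since I will produce an explicit point of $\boldsymbol{R}$, namely the candidate value of $Q$.

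For each nonempty case I would verify that the candidate $\boldsymbol{p}$ is the projection via the sufficient KKT-type condition: $\boldsymbol{p}\in\boldsymbol{R}$ together with a representation $\boldsymbol{x}-\boldsymbol{p}=\alpha\boldsymbol{n}_1+\beta\boldsymbol{n}_2$ with $\alpha,\beta\geq0$ and with $\alpha$ (resp.\ $\beta$) vanishing whenever $\boldsymbol{p}$ lies off the corresponding bounding hyperplane. Indeed, expanding $\scal{\boldsymbol{x}-\boldsymbol{p}}{\boldsymbol{h}-\boldsymbol{p}}$ for $\boldsymbol{h}\in\boldsymbol{R}$ and using $\scal{\boldsymbol{h}-\boldsymbol{y}}{\boldsymbol{n}_1}\leq0$, $\scal{\boldsymbol{h}-\boldsymbol{z}}{\boldsymbol{n}_2}\leq0$ together with complementary slackness yields $\scal{\boldsymbol{x}-\boldsymbol{p}}{\boldsymbol{h}-\boldsymbol{p}}\leq0$, which is the defining inequality of $P_{\boldsymbol{R}}\boldsymbol{x}$. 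Concretely: when $\rho=0,\chi\geq0$, collinearity gives $\boldsymbol{n}_1=(\chi/\nu)\boldsymbol{n}_2$ (or the trivial case $\boldsymbol{y}=\boldsymbol{z}$), whence $\boldsymbol{x}-\boldsymbol{z}=(1+\chi/\nu)\boldsymbol{n}_2$ with nonnegative coefficient and $\boldsymbol{z}$ on the bounding hyperplane of $H(\boldsymbol{y},\boldsymbol{z})$, so $\boldsymbol{p}=\boldsymbol{z}$; when $\rho>0$ and $\chi\nu\geq\rho$, the candidate is $P_{H(\boldsymbol{y},\boldsymbol{z})}\boldsymbol{x}=\boldsymbol{x}+(1+\chi/\nu)(\boldsymbol{z}-\boldsymbol{y})$, the inequality $\chi\nu\geq\rho$ is precisely what forces $\scal{\boldsymbol{p}-\boldsymbol{y}}{\boldsymbol{n}_1}\leq0$ (membership in $H(\boldsymbol{x},\boldsymbol{y})$) while $\chi>0$ makes the multiplier positive; and when $\rho>0$ and $\chi\nu<\rho$ both constraints are active, the stated $\boldsymbol{p}$ is checked to lie on both bounding hyperplanes, with $\boldsymbol{x}-\boldsymbol{p}=\big((\rho-\chi\nu)/\rho\big)\boldsymbol{n}_1+(\mu\nu/\rho)\boldsymbol{n}_2$ having both coefficients nonnegative exactly because $\chi\nu<\rho$ and $\mu,\nu,\rho>0$.

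The only genuinely delicate points, where I would concentrate the care, are organizing the case split so that the sign conditions $\chi\nu\gtrless\rho$ of the statement emerge naturally as the feasibility and nonnegativity requirements of the KKT representation, and confirming that the ``$\rho=0,\chi\geq0$'' branch is the collinear limit of the second formula, so that writing $\boldsymbol{z}$ legitimately avoids the spurious division by $\nu$ when $\boldsymbol{y}=\boldsymbol{z}$. Everything else reduces to the three inner-product identities recorded at the outset.
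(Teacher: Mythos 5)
Your proof is correct, but there is no internal proof in the paper to compare it with: the authors state the lemma as a quotation of \cite[Th\'eor\`eme~3-1]{Haug68} and point to \cite[Corollaire~28.21]{Livre1} for an alternative derivation, so the lemma is outsourced rather than proved in the text. Your route is a sound, self-contained replacement, and it is in substance the same half-space geometry that underlies the cited sources. The ingredients all check out. Cauchy--Schwarz gives $\rho\ge 0$ and makes your four cases exhaustive and mutually exclusive. For case (i), $\rho=0$ with $\chi<0$ forces both normals $\boldsymbol{n}_1=\boldsymbol{x}-\boldsymbol{y}$ and $\boldsymbol{n}_2=\boldsymbol{y}-\boldsymbol{z}$ to be nonzero and negatively proportional, so your squeeze $\scal{\boldsymbol{z}}{\boldsymbol{n}_1}\le\scal{\boldsymbol{h}}{\boldsymbol{n}_1}\le\scal{\boldsymbol{y}}{\boldsymbol{n}_1}$ yields $\chi\ge 0$, the desired contradiction. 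In the three nonempty branches, your KKT-type certificate (a point $\boldsymbol{p}\in\boldsymbol{R}$ with $\boldsymbol{x}-\boldsymbol{p}=\alpha\boldsymbol{n}_1+\beta\boldsymbol{n}_2$, $\alpha,\beta\ge 0$, and complementary slackness) does imply the variational characterization $\scal{\boldsymbol{x}-\boldsymbol{p}}{\boldsymbol{h}-\boldsymbol{p}}\le 0$ for all $\boldsymbol{h}\in\boldsymbol{R}$, because each term $\scal{\boldsymbol{n}_1}{\boldsymbol{h}-\boldsymbol{p}}$, $\scal{\boldsymbol{n}_2}{\boldsymbol{h}-\boldsymbol{p}}$ is either multiplied by a zero coefficient or reduces to $\scal{\boldsymbol{n}_1}{\boldsymbol{h}-\boldsymbol{y}}\le 0$, respectively $\scal{\boldsymbol{n}_2}{\boldsymbol{h}-\boldsymbol{z}}\le 0$. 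The decisive computations are right: in the middle branch, with $\boldsymbol{p}=P_{H(\boldsymbol{y},\boldsymbol{z})}\boldsymbol{x}=\boldsymbol{x}-(1+\chi/\nu)\boldsymbol{n}_2$ one gets $\scal{\boldsymbol{p}-\boldsymbol{y}}{\boldsymbol{n}_1}=(\rho-\chi\nu)/\nu$, so the hypothesis $\chi\nu\ge\rho$ is exactly membership of $\boldsymbol{p}$ in $H(\boldsymbol{x},\boldsymbol{y})$, and $\chi>0$ there since $\chi\nu\ge\rho>0$; in the last branch the candidate lies on both bounding hyperplanes and $\boldsymbol{x}-\boldsymbol{p}=\big((\rho-\chi\nu)/\rho\big)\boldsymbol{n}_1+(\mu\nu/\rho)\boldsymbol{n}_2$ has positive coefficients; and the degenerate subcase $\nu=0$ of the branch $\rho=0$, $\chi\ge 0$ is harmless, since then $H(\boldsymbol{y},\boldsymbol{z})=\KKK$ and $P_{H(\boldsymbol{x},\boldsymbol{y})}\boldsymbol{x}=\boldsymbol{y}=\boldsymbol{z}$, as you note. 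What your write-up buys is that the lemma becomes self-contained within the paper; the cost is the case bookkeeping that the citation avoids.
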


\begin{proposition}{\rm(\cite[Proposition~2.1]{Nfao15})}
\label{p:santiago2014-01-06}
Let $\KKK$ be a real Hilbert space, let $\boldsymbol{C}$ be a 
nonempty closed convex subset of $\KKK$, and let 
$\boldsymbol{x}_0\in\KKK$. Iterate
\begin{equation}
\label{e:2013-05-20f}
\begin{array}{l}
\text{for}\;n=0,1,\ldots\\
\left\lfloor
\begin{array}{l}
\text{take}\;\boldsymbol{x}_{n+1/2}\in\KKK\;\text{such that}\;
\boldsymbol{C}\subset H(\boldsymbol{x}_n,\boldsymbol{x}_{n+1/2})\\
\boldsymbol{x}_{n+1}=Q\big(\boldsymbol{x}_0,\boldsymbol{x}_n,
\boldsymbol{x}_{n+1/2}\big).
\end{array}
\right.\\
\end{array}
\end{equation}
Then the sequence $(\boldsymbol{x}_n)_{n\in\NN}$ is well defined 
and the following hold:
\begin{enumerate}
\item
\label{p:santiago2014-01-06i-}
$(\forall n\in\NN)$ $\|\boldsymbol{x}_n-\boldsymbol{x}_0\|\leq
\|\boldsymbol{x}_{\boldsymbol{n}+1}-\boldsymbol{x}_0\|\leq
\|P_{\boldsymbol{C}}\boldsymbol{x}_0-\boldsymbol{x}_0\|$.
\item
\label{p:santiago2014-01-06i}
$(\forall n\in\NN)$ $\boldsymbol{C}\subset
H(\boldsymbol{x}_0,\boldsymbol{x}_n)\cap
H(\boldsymbol{x}_n,\boldsymbol{x}_{n+1/2})$.
\item
\label{p:santiago2014-01-06ii}
$\sum_{n\in\NN}\|\boldsymbol{x}_{n+1}-
\boldsymbol{x}_n\|^2<\pinf$.
\item
\label{p:santiago2014-01-06iii}
$\sum_{n\in\NN}\|\boldsymbol{x}_{n+1/2}-
\boldsymbol{x}_n\|^2<\pinf$.
\item
\label{p:santiago2014-01-06iv}
Suppose that, for every $\boldsymbol{x}\in\KKK$ and every 
strictly increasing sequence $(q_n)_{n\in\NN}$ in $\NN$, 
$\boldsymbol{x}_{q_n}\weakly\boldsymbol{x}$
$\Rightarrow$ $\boldsymbol{x}\in\boldsymbol{C}$.
Then $\boldsymbol{x}_n\to P_{\boldsymbol{C}}\boldsymbol{x}_0$.
\end{enumerate}
\end{proposition}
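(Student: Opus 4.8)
The plan is to run the classical Haugazeau analysis, whose engine is Lemma~\ref{l:haugazeauy}. The pivotal observation is that, writing $\boldsymbol{R}_n=H(\boldsymbol{x}_0,\boldsymbol{x}_n)\cap H(\boldsymbol{x}_n,\boldsymbol{x}_{n+1/2})$, whenever $\boldsymbol{R}_n\neq\emp$ the lemma guarantees that $Q(\boldsymbol{x}_0,\boldsymbol{x}_n,\boldsymbol{x}_{n+1/2})$ is well defined and coincides with $P_{\boldsymbol{R}_n}\boldsymbol{x}_0$, so that $\boldsymbol{x}_{n+1}=P_{\boldsymbol{R}_n}\boldsymbol{x}_0$. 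I would first record two elementary facts about the half-spaces: $H(\boldsymbol{x}_0,\boldsymbol{x}_0)=\KKK$, and for arbitrary $\boldsymbol{x},\boldsymbol{y}\in\KKK$ one has $\boldsymbol{y}=P_{H(\boldsymbol{x},\boldsymbol{y})}\boldsymbol{x}$, since $H(\boldsymbol{x},\boldsymbol{y})$ is precisely the half-space whose boundary passes through $\boldsymbol{y}$ with outer normal $\boldsymbol{x}-\boldsymbol{y}$.

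First I would prove well-definedness of the sequence together with~\ref{p:santiago2014-01-06i} by a single induction on $n$ carrying the extra hypothesis $\boldsymbol{C}\subset H(\boldsymbol{x}_0,\boldsymbol{x}_n)$. The base case $n=0$ is just $\boldsymbol{C}\subset\KKK$. For the inductive step, combining $\boldsymbol{C}\subset H(\boldsymbol{x}_0,\boldsymbol{x}_n)$ with the inclusion $\boldsymbol{C}\subset H(\boldsymbol{x}_n,\boldsymbol{x}_{n+1/2})$ imposed by the update gives $\boldsymbol{C}\subset\boldsymbol{R}_n$; since $\boldsymbol{C}\neq\emp$, the set $\boldsymbol{R}_n$ is nonempty, so $\boldsymbol{x}_{n+1}=P_{\boldsymbol{R}_n}\boldsymbol{x}_0$ is well defined. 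Applying the variational characterization of $P_{\boldsymbol{R}_n}\boldsymbol{x}_0$ to the points of $\boldsymbol{C}\subset\boldsymbol{R}_n$ yields $\scal{\boldsymbol{c}-\boldsymbol{x}_{n+1}}{\boldsymbol{x}_0-\boldsymbol{x}_{n+1}}\leq 0$ for every $\boldsymbol{c}\in\boldsymbol{C}$, which is exactly $\boldsymbol{C}\subset H(\boldsymbol{x}_0,\boldsymbol{x}_{n+1})$, closing the induction and establishing~\ref{p:santiago2014-01-06i}.

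Assertions~\ref{p:santiago2014-01-06i-}, \ref{p:santiago2014-01-06ii}, and~\ref{p:santiago2014-01-06iii} then follow from a bookkeeping of projection inequalities. For~\ref{p:santiago2014-01-06i-}, from $\boldsymbol{x}_n=P_{H(\boldsymbol{x}_0,\boldsymbol{x}_n)}\boldsymbol{x}_0$ and $\boldsymbol{x}_{n+1}\in\boldsymbol{R}_n\subset H(\boldsymbol{x}_0,\boldsymbol{x}_n)$ minimality of the projection gives $\|\boldsymbol{x}_n-\boldsymbol{x}_0\|\leq\|\boldsymbol{x}_{n+1}-\boldsymbol{x}_0\|$, while $P_{\boldsymbol{C}}\boldsymbol{x}_0\in\boldsymbol{C}\subset\boldsymbol{R}_n$ together with $\boldsymbol{x}_{n+1}=P_{\boldsymbol{R}_n}\boldsymbol{x}_0$ gives $\|\boldsymbol{x}_{n+1}-\boldsymbol{x}_0\|\leq\|P_{\boldsymbol{C}}\boldsymbol{x}_0-\boldsymbol{x}_0\|$. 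For~\ref{p:santiago2014-01-06ii}, the obtuse-angle inequality $\scal{\boldsymbol{x}_0-\boldsymbol{x}_n}{\boldsymbol{x}_{n+1}-\boldsymbol{x}_n}\leq 0$ expands to $\|\boldsymbol{x}_{n+1}-\boldsymbol{x}_n\|^2\leq\|\boldsymbol{x}_{n+1}-\boldsymbol{x}_0\|^2-\|\boldsymbol{x}_n-\boldsymbol{x}_0\|^2$, so telescoping and the uniform bound of~\ref{p:santiago2014-01-06i-} make the series finite. For~\ref{p:santiago2014-01-06iii}, since $\boldsymbol{x}_{n+1/2}=P_{H(\boldsymbol{x}_n,\boldsymbol{x}_{n+1/2})}\boldsymbol{x}_n$ and $\boldsymbol{x}_{n+1}\in\boldsymbol{R}_n\subset H(\boldsymbol{x}_n,\boldsymbol{x}_{n+1/2})$, minimality gives $\|\boldsymbol{x}_{n+1/2}-\boldsymbol{x}_n\|\leq\|\boldsymbol{x}_{n+1}-\boldsymbol{x}_n\|$, and~\ref{p:santiago2014-01-06ii} finishes.

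The substance of the statement is~\ref{p:santiago2014-01-06iv}, and this is where the effort concentrates. By~\ref{p:santiago2014-01-06i-} the sequence $(\boldsymbol{x}_n)_{n\in\NN}$ is bounded and $(\|\boldsymbol{x}_n-\boldsymbol{x}_0\|)_{n\in\NN}$ is nondecreasing and bounded above, hence converges to some $\ell\leq\|\boldsymbol{p}-\boldsymbol{x}_0\|$, where $\boldsymbol{p}=P_{\boldsymbol{C}}\boldsymbol{x}_0$. For any weakly convergent subsequence $\boldsymbol{x}_{q_n}\weakly\boldsymbol{x}$, the standing hypothesis forces $\boldsymbol{x}\in\boldsymbol{C}$; weak lower semicontinuity of the norm gives $\|\boldsymbol{x}-\boldsymbol{x}_0\|\leq\ell$, and minimality of $\boldsymbol{p}$ gives $\|\boldsymbol{p}-\boldsymbol{x}_0\|\leq\|\boldsymbol{x}-\boldsymbol{x}_0\|$, so these squeeze into equalities, whence $\boldsymbol{x}=\boldsymbol{p}$ by uniqueness of the nearest point in $\boldsymbol{C}$ and $\ell=\|\boldsymbol{p}-\boldsymbol{x}_0\|$. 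Consequently every weak sequential cluster point of the bounded sequence equals $\boldsymbol{p}$, so $\boldsymbol{x}_n\weakly\boldsymbol{p}$. To upgrade to strong convergence I would expand $\|\boldsymbol{x}_n-\boldsymbol{p}\|^2=\|\boldsymbol{x}_n-\boldsymbol{x}_0\|^2-2\scal{\boldsymbol{x}_n-\boldsymbol{x}_0}{\boldsymbol{p}-\boldsymbol{x}_0}+\|\boldsymbol{p}-\boldsymbol{x}_0\|^2$ and let $n\to\pinf$: the first term tends to $\|\boldsymbol{p}-\boldsymbol{x}_0\|^2$ because $\ell=\|\boldsymbol{p}-\boldsymbol{x}_0\|$, and the inner product tends to $\|\boldsymbol{p}-\boldsymbol{x}_0\|^2$ by $\boldsymbol{x}_n\weakly\boldsymbol{p}$, so the right-hand side vanishes and $\boldsymbol{x}_n\to\boldsymbol{p}$. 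The hard part is exactly this final weak-to-strong passage, where the precise value $\ell=\|\boldsymbol{p}-\boldsymbol{x}_0\|$ of the limiting norm must be combined with weak convergence; everything upstream is routine manipulation of projection inequalities fed by Lemma~\ref{l:haugazeauy}.
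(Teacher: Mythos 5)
Your proof is correct. Note that the paper itself offers no argument for this proposition---it is imported verbatim from \cite[Proposition~2.1]{Nfao15}---and your reconstruction is exactly the classical Haugazeau argument underlying that reference: an induction that simultaneously yields well-definedness and $\boldsymbol{C}\subset H(\boldsymbol{x}_0,\boldsymbol{x}_n)$, the projection-minimality inequalities $\boldsymbol{x}_n=P_{H(\boldsymbol{x}_0,\boldsymbol{x}_n)}\boldsymbol{x}_0$ and $\boldsymbol{x}_{n+1/2}=P_{H(\boldsymbol{x}_n,\boldsymbol{x}_{n+1/2})}\boldsymbol{x}_n$ feeding the telescoping bounds in \ref{p:santiago2014-01-06i-}--\ref{p:santiago2014-01-06iii}, and the weak-cluster-point-plus-norm-convergence upgrade for \ref{p:santiago2014-01-06iv}. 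One cosmetic remark: the identification $Q(\boldsymbol{x}_0,\boldsymbol{x}_n,\boldsymbol{x}_{n+1/2})=P_{\boldsymbol{R}_n}\boldsymbol{x}_0$ is the paper's \emph{definition} of $Q$, not a consequence of Lemma~\ref{l:haugazeauy}; that lemma only supplies the explicit update formula and the dichotomy characterizing when $\boldsymbol{R}_n\neq\emp$, neither of which your argument actually needs once nonemptiness is obtained from $\emp\neq\boldsymbol{C}\subset\boldsymbol{R}_n$.
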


\begin{algorithm}
\label{algo:2}
Consider the setting of Problem~\ref{prob:1} and let $\KKK$ 
be a closed vector subspace of $\HH\oplus\GG$ such that
$\boldsymbol{Z}\subset\KKK$. Let $\varepsilon\in\zeroun$, let 
$(x_0,v_0^*)\in\KKK$, and let
$(\lambda_n)_{n\in\NN}\in\left[\varepsilon,1\right]^{\NN}$.
Iterate
\begin{equation}
\label{eP9u66fbG02e}
\begin{array}{l}
\text{for}\;n=0,1,\ldots\\
\left\lfloor
\begin{array}{l}
(a_n,a_n^*)\in\gra A\\
(b_n,b_n^*)\in\gra B\\
(t^*_n,t_n)=P_{\KKK}(a^*_n+L^*b^*_n,b_n-La_n)\\
\tau_n={\|t^*_n\|^2+\|t_n\|^2}\\
\text{if}\;\tau_n>0\\
\left\lfloor
\begin{array}{l}
\theta_n=\Frac{\lambda_n}{\tau_n}\,\text{\rm max}\Bigbraces{0,
\scal{x_n}{t^*_n}+\scal{t_n}{v^*_n}
-\scal{a_n}{a^*_n}-\scal{b_n}{b^*_n}}\\
\end{array}
\right.\\
\text{else}\; \theta_n = 0 \\
x_{n+1/2}=x_n-\theta_n t^*_n\\
v^*_{n+1/2}=v^*_n-\theta_nt_n\\
(x_{n+1},v^*_{n+1})=Q\big((x_{0},v^*_{0}),(x_{n},v^*_{n}),
(x_{n+1/2},v^*_{n+1/2})\big).
\end{array}
\right.\\
\end{array}
\end{equation}
\end{algorithm}

\begin{remark}
\label{rferw7er3h06-04}
Using Lemma~\ref{l:haugazeauy}, the computation of the update 
$(x_{n+1},v^*_{n+1})$ in \eqref{eP9u66fbG02e} can be explicitly
broken into the following steps:
\begin{equation}
\label{eP9u66fbG04e}
\begin{array}{l}
\chi_n=\scal{x_0-x_n}{x_n-x_{n+1/2}}
+\scal{v_0^*-v_n^*}{v_n^*-v_{n+1/2}^*}\\
\mu_n=\|x_0-x_n\|^2+\|v_0^*-v_n^*\|^2\\
\nu_n=\|x_n-x_{n+1/2}\|^2+\|v_n^*-v_{n+1/2}^*\|^2\\
\rho_n=\mu_n\nu_n-\chi_n^2\\
\text{if}\;\rho_n=0\;\text{and}\;\chi_n\geq 0\\
\left\lfloor
\begin{array}{l}
x_{n+1}=x_{n+1/2}\\
v^*_{n+1}=v_{n+1/2}^*
\end{array}
\right.\\
\text{if}\;\rho_n>0\;\text{and}\;\chi_n\nu_n\geq\rho_n\\
\left\lfloor
\begin{array}{l}
x_{n+1}=x_0+(1+\chi_n/\nu_n)(x_{n+1/2}-x_n)\\
v^*_{n+1}=v_0^*+(1+\chi_n/\nu_n)(v_{n+1/2}^*-v_n^*)
\end{array}
\right.\\
\text{if}\;\rho_n>0\;\text{and}\;\chi_n\nu_n<\rho_n\\
\left\lfloor
\begin{array}{l}
x_{n+1}=x_n+(\nu_n/\rho_n)\big(\chi_n(x_0-x_n)
+\mu_n(x_{n+1/2}-x_n)\big)\\
v^*_{n+1}=v_n^*+(\nu_n/\rho_n)\big(\chi_n(v_0^*-v_n^*)
+\mu_n(v_{n+1/2}^*-v_n^*)\big).
\end{array}
\right.\\
\end{array}
\end{equation}
\end{remark}

\begin{proposition}
\label{pP9u66fbG03}
Consider the setting of Problem~\ref{prob:1} and
Algorithm~\ref{algo:2}. Suppose that
$\mathscr{P}\neq\emp$ and set $(\overline{x},\overline{v}^*)=
P_{\boldsymbol{Z}}(x_0,v_0^*)$. 
Then the following hold:
\begin{enumerate}
\item
\label{pP9u66fbG03i}
$(x_n)_{n\in\NN}$ and $(v_n^*)_{n\in\NN}$ are bounded.
\item
\label{pP9u66fbG03ii}
$\sum_{n\in\NN}\|x_{n+1}-x_n\|^2<\pinf$ and
$\sum_{n\in\NN}\|v^*_{n+1}-v^*_n\|^2<\pinf$.
\item
\label{pP9u66fbG03iii}
$\sum_{n\in\NN}\|x_{n+1/2}-x_n\|^2<\pinf$ and
$\sum_{n\in\NN}\|v^*_{n+1/2}-v^*_n\|^2<\pinf$.
\item
\label{pP9u66fbG03iv}
Suppose that the sequences
$(a_n)_{n\in\NN}$, $(b_n)_{n\in\NN}$, $(a^*_n)_{n\in\NN}$, and
$(b^*_n)_{n\in\NN}$ are bounded. Then
\begin{equation}
\label{eP9u66fbG03h}
\varlimsup\big(\scal{x_n-a_n}{a_n^*+L^*v_n^*}
+\scal{Lx_n-b_n}{b_n^*-v_n^*}\big)\leq 0.
\end{equation}
\item
\label{pP9u66fbG03v}
Suppose that, for every $(x,v^*)\in\KKK$ and every 
strictly increasing sequence $(q_n)_{n\in\NN}$ in $\NN$, 
\begin{equation}
\label{eferw7er3h06-03g}
\big[\:x_{q_n}\weakly x\;\;\text{and}\;\;v^*_{q_n}\weakly v^*\:\big]
\quad\Rightarrow\quad (x,v^*)\in\boldsymbol{Z}.
\end{equation}
Then $(x_n)_{n\in\NN}$ converges strongly to 
$\overline{x}\in\mathscr{P}$ and $(v_n^*)_{n\in\NN}$ converges 
strongly to $\overline{v}^*\in\mathscr{D}$.
\end{enumerate}
\end{proposition}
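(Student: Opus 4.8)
The plan is to recast Algorithm~\ref{algo:2} as an instance of the abstract Haugazeau iteration of Proposition~\ref{p:santiago2014-01-06} applied with $\boldsymbol{C}=\boldsymbol{Z}$, exactly paralleling the way the proof of Proposition~\ref{p:2013-05-22} reduces Algorithm~\ref{algo:1} to Proposition~\ref{p:1999}. First I would reuse the notation of \eqref{eferw7er3h06-13t}: $\boldsymbol{x}_n=(x_n,v_n^*)$, $\boldsymbol{s}_n^*=(a_n^*+L^*b_n^*,b_n-La_n)$, $\boldsymbol{t}_n^*=(t_n^*,t_n)=P_{\KKK}\boldsymbol{s}_n^*$, $\eta_n=\scal{a_n}{a_n^*}+\scal{b_n}{b_n^*}$, and $\boldsymbol{H}_n=\menge{\boldsymbol{x}\in\KKK}{\scal{\boldsymbol{x}}{\boldsymbol{t}_n^*}\leq\eta_n}$, together with $\boldsymbol{x}_{n+1/2}=(x_{n+1/2},v_{n+1/2}^*)$. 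Proposition~\ref{pgre76g12-12}\ref{pgre76g12-12i}--\ref{pgre76g12-12ii} give that $\boldsymbol{Z}$ is a nonempty closed convex subset of $\KKK$, and Proposition~\ref{pgre76g12-12}\ref{pgre76g12-12iiib} gives $\boldsymbol{Z}\subset\boldsymbol{H}_n$ for every $n$. From \eqref{eP9u66fbG02e} one reads off $\boldsymbol{x}_{n+1/2}=\boldsymbol{x}_n-\theta_n\boldsymbol{t}_n^*$ and, by Lemma~\ref{l:haugazeauy} (cf. Remark~\ref{rferw7er3h06-04}), $\boldsymbol{x}_{n+1}=Q(\boldsymbol{x}_0,\boldsymbol{x}_n,\boldsymbol{x}_{n+1/2})$; moreover $\boldsymbol{x}_{n+1/2}$ is the \emph{relaxed} projection $\boldsymbol{x}_n+\lambda_n(P_{\boldsymbol{H}_n}\boldsymbol{x}_n-\boldsymbol{x}_n)$ of $\boldsymbol{x}_n$ onto $\boldsymbol{H}_n$.

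The crux, and the step I expect to be the main obstacle, is to verify the standing hypothesis of Proposition~\ref{p:santiago2014-01-06}, namely $\boldsymbol{Z}\subset H(\boldsymbol{x}_n,\boldsymbol{x}_{n+1/2})$. Writing $\boldsymbol{p}=P_{\boldsymbol{H}_n}\boldsymbol{x}_n$, so that $\boldsymbol{x}_n-\boldsymbol{x}_{n+1/2}=\lambda_n(\boldsymbol{x}_n-\boldsymbol{p})$, I would expand, for any $\boldsymbol{z}\in\boldsymbol{H}_n$,
\begin{equation}
\scal{\boldsymbol{z}-\boldsymbol{x}_{n+1/2}}{\boldsymbol{x}_n-\boldsymbol{x}_{n+1/2}}
=\lambda_n\scal{\boldsymbol{z}-\boldsymbol{p}}{\boldsymbol{x}_n-\boldsymbol{p}}
-\lambda_n(1-\lambda_n)\|\boldsymbol{x}_n-\boldsymbol{p}\|^2,
\end{equation}
and conclude that it is $\leq 0$, because the projection inequality gives $\scal{\boldsymbol{z}-\boldsymbol{p}}{\boldsymbol{x}_n-\boldsymbol{p}}\leq 0$ and because $1-\lambda_n\geq 0$. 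This is precisely where the restriction $\lambda_n\leq 1$ imposed in Algorithm~\ref{algo:2} is indispensable, in contrast with the range $[\varepsilon,2-\varepsilon]$ allowed in Algorithm~\ref{algo:1}. Since $\boldsymbol{Z}\subset\boldsymbol{H}_n$, this yields $\boldsymbol{Z}\subset H(\boldsymbol{x}_n,\boldsymbol{x}_{n+1/2})$, so $(\boldsymbol{x}_n)_{n\in\NN}$ is exactly the sequence generated by \eqref{e:2013-05-20f} with $\boldsymbol{C}=\boldsymbol{Z}$, and Proposition~\ref{p:santiago2014-01-06} applies.

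With this identification in place, items \ref{pP9u66fbG03i}--\ref{pP9u66fbG03iii} are immediate: boundedness of $(\boldsymbol{x}_n)_{n\in\NN}$, hence of $(x_n)_{n\in\NN}$ and $(v_n^*)_{n\in\NN}$, follows from Proposition~\ref{p:santiago2014-01-06}\ref{p:santiago2014-01-06i-}; and the two square-summability statements follow from Proposition~\ref{p:santiago2014-01-06}\ref{p:santiago2014-01-06ii} and \ref{p:santiago2014-01-06iii}, upon using $\|\boldsymbol{x}_{n+1}-\boldsymbol{x}_n\|^2=\|x_{n+1}-x_n\|^2+\|v_{n+1}^*-v_n^*\|^2$ and the analogous identity for the half-step.

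For item \ref{pP9u66fbG03iv} I would mirror the computation of Proposition~\ref{p:2013-05-22}\ref{p:2013-05-22iv}: the bound \eqref{eferw7er3h03-01c} still shows that $(\tau_n)_{n\in\NN}$ is bounded under the boundedness hypothesis on $(a_n)_{n\in\NN}$, $(b_n)_{n\in\NN}$, $(a_n^*)_{n\in\NN}$, $(b_n^*)_{n\in\NN}$, while item \ref{pP9u66fbG03iii} forces $\|\boldsymbol{x}_{n+1/2}-\boldsymbol{x}_n\|\to 0$. Using $\boldsymbol{x}_n\in\KKK$ to replace $\boldsymbol{s}_n^*$ by $\boldsymbol{t}_n^*$ in the pairing and the definition of $\theta_n$, one obtains, whenever $\tau_n>0$, that $\scal{\boldsymbol{x}_n}{\boldsymbol{s}_n^*}-\eta_n\leq\max\{0,\scal{\boldsymbol{x}_n}{\boldsymbol{t}_n^*}-\eta_n\}=(\sqrt{\tau_n}/\lambda_n)\,\|\boldsymbol{x}_{n+1/2}-\boldsymbol{x}_n\|\to 0$, since $\lambda_n\geq\varepsilon$. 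When $\tau_n=0$ we have $\boldsymbol{t}_n^*=\boldsymbol{0}$, hence $\boldsymbol{H}_n=\menge{\boldsymbol{x}\in\KKK}{0\leq\eta_n}$, and $\boldsymbol{Z}\subset\boldsymbol{H}_n$ together with $\boldsymbol{Z}\neq\emp$ forces $\eta_n\geq 0$; since also $\scal{\boldsymbol{x}_n}{\boldsymbol{s}_n^*}=0$, the quantity equals $-\eta_n\leq 0$. In both cases the $\varlimsup$ of $\scal{x_n-a_n}{a_n^*+L^*v_n^*}+\scal{Lx_n-b_n}{b_n^*-v_n^*}=\scal{\boldsymbol{x}_n}{\boldsymbol{s}_n^*}-\eta_n$ is $\leq 0$, which is \eqref{eP9u66fbG03h}. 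Finally, item \ref{pP9u66fbG03v} follows from Proposition~\ref{p:santiago2014-01-06}\ref{p:santiago2014-01-06iv}: hypothesis \eqref{eferw7er3h06-03g} is exactly the weak-sequential-closedness condition with $\boldsymbol{C}=\boldsymbol{Z}$ (weak convergence in $\KKK$ being componentwise), so $\boldsymbol{x}_n\to P_{\boldsymbol{Z}}(x_0,v_0^*)=(\overline{x},\overline{v}^*)$ strongly, whence $x_n\to\overline{x}$, $v_n^*\to\overline{v}^*$, with $\overline{x}\in\mathscr{P}$ and $\overline{v}^*\in\mathscr{D}$ by Proposition~\ref{pgre76g12-12}\ref{pgre76g12-12i}.
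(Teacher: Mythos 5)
Your proof is correct and follows the paper's overall route: both recast \eqref{eP9u66fbG02e} as an instance of the abstract Haugazeau scheme \eqref{e:2013-05-20f} with $\boldsymbol{C}=\boldsymbol{Z}$, read items \ref{pP9u66fbG03i}--\ref{pP9u66fbG03iii} and \ref{pP9u66fbG03v} off Proposition~\ref{p:santiago2014-01-06}, and prove \ref{pP9u66fbG03iv} by rerunning the computation from Proposition~\ref{p:2013-05-22}\ref{p:2013-05-22iv}. The one step where you genuinely deviate is the verification of the standing hypothesis $\boldsymbol{Z}\subset H(\boldsymbol{x}_n,\boldsymbol{x}_{n+1/2})$. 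You identify $\boldsymbol{x}_{n+1/2}$ as the underrelaxed projection $\boldsymbol{x}_n+\lambda_n(P_{\boldsymbol{H}_n}\boldsymbol{x}_n-\boldsymbol{x}_n)$ (which requires the explicit half-space projection formula \eqref{eferw7er3h06-13c} and nonemptiness of $\boldsymbol{H}_n$, both available here since $\boldsymbol{Z}\neq\emp$) and then invoke the general geometric fact that a relaxed projection with $0<\lambda_n\leq 1$ still produces a separator, via the identity $\scal{\boldsymbol{z}-\boldsymbol{x}_{n+1/2}}{\boldsymbol{x}_n-\boldsymbol{x}_{n+1/2}}=\lambda_n\scal{\boldsymbol{z}-\boldsymbol{p}}{\boldsymbol{x}_n-\boldsymbol{p}}-\lambda_n(1-\lambda_n)\|\boldsymbol{x}_n-\boldsymbol{p}\|^2$, whose right-hand side is nonpositive by the projection inequality. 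The paper instead argues purely algebraically on the half-space $\boldsymbol{H}_n$: when $\boldsymbol{x}_{n+1/2}\neq\boldsymbol{x}_n$ it deduces $\eta_n\leq\scal{\boldsymbol{x}_{n+1/2}}{\boldsymbol{t}_n^*}$ from $\lambda_n\leq 1$, whence $\boldsymbol{Z}\subset\boldsymbol{H}_n\subset\menge{\boldsymbol{x}\in\KKK}{\scal{\boldsymbol{x}}{\boldsymbol{t}_n^*}\leq\scal{\boldsymbol{x}_{n+1/2}}{\boldsymbol{t}_n^*}}=H(\boldsymbol{x}_n,\boldsymbol{x}_{n+1/2})$, with no projection formula and no case analysis on $P_{\boldsymbol{H}_n}$ needed. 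Your argument is more general (it applies to relaxed projections onto arbitrary closed convex sets), while the paper's is more self-contained; both correctly pinpoint $\lambda_n\leq 1$ as the reason Algorithm~\ref{algo:2} excludes the overrelaxation permitted in Algorithm~\ref{algo:1}. A small bonus of your write-up: in item \ref{pP9u66fbG03iv} you treat the case $\tau_n=0$ explicitly (using $\boldsymbol{Z}\neq\emp$ and $\boldsymbol{Z}\subset\boldsymbol{H}_n$ to force $\eta_n\geq 0$), a detail the paper leaves implicit in its ``argue as in Proposition~\ref{p:2013-05-22}\ref{p:2013-05-22iv}'' step.
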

\begin{proof}
We first show that we recover the setting of 
Proposition~\ref{p:santiago2014-01-06} applied in $\KKK$ to the
set $\boldsymbol{Z}$ of \eqref{e:2013-05-21a}, which is
nonempty, closed, and convex by
Proposition~\ref{pgre76g12-12}\ref{pgre76g12-12i}%
--\ref{pgre76g12-12ii}. Set
\begin{multline}
\label{e:santiago2014-01-11b}
(\forall n\in\NN)\quad
\boldsymbol{x}_n=(x_n,v_n^*),\quad
\boldsymbol{x}_{n+1/2}=(x_{n+1/2},v_{n+1/2}^*),\quad
\boldsymbol{t}^*_n=(t^*_n,t_n),\\
\text{and}\quad\eta_n=\scal{a_n}{a^*_n}+\scal{b_n}{b^*_n}.
\end{multline}
If, for some $n\in\NN$, we have
$\boldsymbol{x}_{n+1/2}=\boldsymbol{x}_n$, then trivially 
$\boldsymbol{Z}\subset H(\boldsymbol{x}_n,\boldsymbol{x}_{n+1/2})
=\KKK$; otherwise, \eqref{eP9u66fbG02e} imposes that 
$\scal{\boldsymbol{x}_n}{\boldsymbol{t}^*_n}>\eta_n$ and 
therefore that
\begin{align}
\label{eP9u66fbG05x}
\eta_n
&\leq\scal{\boldsymbol{x}_n}{\boldsymbol{t}^*_n}-
\lambda_n\big(\scal{\boldsymbol{x}_n}{\boldsymbol{t}^*_n}
-\eta_n\big)\nonumber\\
&=\scal{\boldsymbol{x}_n}{\boldsymbol{t}^*_n}-
\theta_n\tau_n\nonumber\\
&=\scal{\boldsymbol{x}_n-\theta_n\boldsymbol{t}^*_n}
{\boldsymbol{t}^*_n}\nonumber\\
&=\scal{\boldsymbol{x}_{n+1/2}}{\boldsymbol{t}^*_n},
\end{align}
from which we deduce using
Proposition~\ref{pgre76g12-12}\ref{pgre76g12-12iii} that
\begin{align}
\label{eP9u66fbG05a}
\boldsymbol{Z}
&\subset 
\menge{\boldsymbol{x}\in\boldsymbol{\mathcal{K}}}
{\scal{\boldsymbol{x}}{\boldsymbol{t}^*_n}\leq\eta_n}\nonumber\\
&\subset
\menge{\boldsymbol{x}\in\boldsymbol{\mathcal{K}}}
{\scal{\boldsymbol{x}}{\boldsymbol{t}^*_n}
\leq\scal{\boldsymbol{x}_{n+1/2}}
{\boldsymbol{t}^*_n}}\nonumber\\
&=\menge{\boldsymbol{x}\in\boldsymbol{\mathcal{K}}}
{\scal{\boldsymbol{x}}{\boldsymbol{x}_n-\boldsymbol{x}_{n+1/2}}
\leq\scal{\boldsymbol{x}_{n+1/2}}
{\boldsymbol{x}_n-\boldsymbol{x}_{n+1/2}}}\nonumber\\
&=H\big(\boldsymbol{x}_n,\boldsymbol{x}_{n+1/2}\big).
\end{align}
Altogether, \eqref{eP9u66fbG02e} is an instance of 
\eqref{e:2013-05-20f} with $\boldsymbol{C}=\boldsymbol{Z}$, and 
we can apply Proposition~\ref{p:santiago2014-01-06}. In particular,
Proposition~\ref{p:santiago2014-01-06}\ref{p:santiago2014-01-06i} 
asserts that $(x_n,v_n^*)_{n\in\NN}$ is well defined.  
We can now establish the claims of
the proposition as follows.

\ref{pP9u66fbG03i}: This is a consequence of 
Proposition~\ref{p:santiago2014-01-06}\ref{p:santiago2014-01-06i-}.

\ref{pP9u66fbG03ii}:
It follows from \eqref{e:santiago2014-01-11b} and 
Proposition~\ref{p:santiago2014-01-06}\ref{p:santiago2014-01-06ii}
that $\sum_{n\in\NN}\|x_{n+1}-x_n\|^2+
\sum_{n\in\NN}\|v^*_{n+1}-v^*_n\|^2=
\sum_{n\in\NN}\|\boldsymbol{x}_{n+1}-
\boldsymbol{x}_n\|^2<\pinf$.

\ref{pP9u66fbG03iii}:
In view of \eqref{e:santiago2014-01-11b} and 
Proposition~\ref{p:santiago2014-01-06}\ref{p:santiago2014-01-06iii},
$\sum_{n\in\NN}\|x_{n+1/2}-x_n\|^2+
\sum_{n\in\NN}\|v^*_{n+1/2}-v^*_n\|^2=
\sum_{n\in\NN}\|\boldsymbol{x}_{n+1/2}-
\boldsymbol{x}_n\|^2<\pinf$.

\ref{pP9u66fbG03iv}: 
Set $(\forall n\in\NN)$ $\Delta_n=\sqrt{\tau_n}\theta_n/\lambda_n$. 
We derive from 
\eqref{eP9u66fbG02e} and \ref{pP9u66fbG03iii} that
\begin{equation}
\label{eP9u66fbG05y}
\sum_{n\in\NN}\Delta_n^2=
\sum_{n\in\NN}\frac{\tau_n\theta_n^2}{\lambda_n^2}
\leq\sum_{n\in\NN}\frac{\tau_n\theta^2_n}{\varepsilon^2}
=\sum_{n\in\NN}\frac{\|\boldsymbol{x}_{n+1/2}-\boldsymbol{x}_n\|^2}
{\varepsilon^2}<\pinf.
\end{equation}
The claim is then obtained by arguing as in the proof of 
Proposition~\ref{p:2013-05-22}\ref{p:2013-05-22iv}.

\ref{pP9u66fbG03v}:
This follows directly from 
Proposition~\ref{p:santiago2014-01-06}\ref{p:santiago2014-01-06iv}.
\end{proof}

\begin{remark}
\label{rferw7er3h07-15x}
Proposition~\ref{pP9u66fbG03} guarantees strong convergence to
the projection of the initial point $(x_0,v_0^*)$ onto the 
Kuhn-Tucker set under the same conditions that provide weak 
convergence to an unspecified Kuhn-Tucker point in 
Proposition~\ref{p:2013-05-22}. This phenomenon is akin to the
weak-to-strong convergence principle investigated in a fixed-point
setting in \cite{Moor01}.
\end{remark}

\section{Solving Problem~\ref{prob:2}}
\label{sec:3}

\subsection{Block iterations and asynchronicity}

In existing monotone operator splitting methods, each operator
in the inclusion problem must be used at each iteration 
$n$ in a resolvent calculation that must be based on 
information available at the current 
iteration. For instance, the methods of 
\cite{Siop14,Nfao15} require 
points $(a_{i,n},a^*_{i,n})\in\gra A_i$ and 
$(b_{k,n},b^*_{k,n})\in\gra B_k$ for every $i\in I$ and every 
$k\in K$, 
and these points must be computed using the current values of
the primal variables $(x_{i,n})_{i\in I}$ and of the dual variables
$(v^*_{k,n})_{k\in K}$.  The earlier work in~\cite{Svai08,Svai09}
in the context of \eqref{e:2009} is similar. The two
main novelties we present in this paper are to depart from this 
approach by
allowing asynchronous block iterations. Specifically, we allow:

{\bfseries Block iterations:} 
At iteration $n$, we require calculation
of new points in the graphs of only some of the operators, say
$(A_i)_{i\in I_n\subset I}$ and $(B_k)_{k\in K_n\subset K}$. The
control sequences $(I_n)_{n\in\NN}$ and $(K_n)_{n\in\NN}$ dictate
how frequently the various operators are used.

{\bfseries Asynchronicity:} 
A new point $(a_{i,n},a^*_{i,n})\in\gra A_i$ being incorporated
into the calculations at iteration $n$ may be based on data
$x_{i,c_i(n)}$ and $(v^*_{k,c_i(n)})_{k\in K}$ available at 
some possibly earlier iteration $c_i(n)\leq n$. Therefore, the
calculation of $(a_{i,n},a^*_{i,n})$ could have been initiated at
iteration $c_i(n)$, with its results becoming available only at
iteration $n$. Likewise, for every $k\in K_n$, the computation of
$(b_{k,n},b^*_{k,n})\in\gra B_k$ can be initiated at some iteration
$d_k(n)\leq n$, based on $(x_{i,d_k(n)})_{i\in I}$ and
$v^*_{k,d_k(n)}$.

To establish convergence, there needs to be some limits on the
asynchronous asynchronicity lag of the algorithm and the spacing 
between successive calculations involving each operator, as 
described in the following assumption.

\begin{assumption} \
\label{ass:validcontrol}
\begin{enumerate}
\item 
$M$ is a strictly positive integer, $(I_n)_{n\in\NN}$ is a sequence 
of nonempty subsets of $I$, and $(K_n)_{n\in\NN}$ is a sequence of 
nonempty subsets of $K$ such that
\begin{equation}
\label{ekjui7743v10-24G}
I_0=I,\;K_0=K,\quad\text{and}\quad
(\forall n\in\NN) \;\; \left( \bigcup_{j=n}^{n+M-1}I_j=I
\quad\text{and}\quad\bigcup_{j=n}^{n+M-1}K_j=K \right).
\end{equation}
\item
$D$ is a positive integer and, for every $i\in I$ and every 
$k\in K$, $(c_i(n))_{n\in\NN}$ and $(d_k(n))_{n\in\NN}$ are 
sequences in $\NN$ such that
\begin{equation} 
\label{e:lag}
(\forall n\in\NN)\quad
\Big((\forall i\in I) \;\;n-D \leq c_i(n) \leq n \
\quad \text{and} \quad
(\forall k \in K)\;\;n-D\leq d_k(n)\leq n\Big).
\end{equation}
\item
\label{ass:validcontroliii}
$\varepsilon\in\zeroun$ and, for every $i\in I$ and every $k\in K$, 
$(\gamma_{i,n})_{n\in\NN}$ and $(\mu_{k,n})_{n\in\NN}$ are 
sequences in $[\varepsilon,1/\varepsilon]$.
\end{enumerate}
\end{assumption}

At iteration $n$, our algorithms incorporates points in the
graphs of the operators $(A_i)_{i\in I_n}$ and
$(B_k)_{k\in K_n}$. Condition~\eqref{ekjui7743v10-24G} ensures 
that over any span of $M$ consecutive iterations, each operator is 
incorporated into the algorithm at least once. The standard case
corresponds to using all the operators at each iteration, \emph{i.e.}
$(\forall n\in\NN)$ $I_n=I$ and $K_n=K$. Toward the other extreme, 
it is possible to use just one of the operators from 
$(A_i)_{i\in I}$ and $(B_k)_{k\in K}$ at iteration $n$. 
For example, such
a control regime could be achieved by setting $M=\text{max}\{m,p\}$ 
and sweeping though the operators in a periodic manner.
Condition~\eqref{e:lag} guarantees that 
the points in the graphs incorporated into the algorithm are
based on information at most $D$ iterations out
of date. 
If the algorithm is being implemented synchronously, then one can 
simply set $D=0$, in which case 
$(\forall n\in\NN)(\forall i\in I)(\forall k\in K)$
$c_i(n)=n$ and $d_k(n)=n$. Finally, the positive scalars 
$(\gamma_{i,n})_{n\in\NN}$ and
$(\mu_{k,n})_{n\in\NN}$ in \ref{ass:validcontroliii} are the proximal
parameters used in the resolvent calculations. The assumption
requires that they be bounded above and also away from $0$.

The following result is the key asymptotic principle on which our
two main theorems will rest.  The key idea of our algorithm is to
simply recycle an old point in the graph of each operator for
which new information is not available.

\begin{proposition}
\label{pP9u66fbG07}
Consider the setting of Problem~\ref{prob:2} and suppose that the 
following are satisfied: 
\begin{enumerate}[label=\rm(\alph*)]
\item
\label{aP9u66fbG07i}
For every $i\in I$, $(x_{i,n})_{n\in\NN}$ is a bounded sequence in 
$\HH_i$ and, for every $k\in K$, $(v^*_{k,n})_{n\in\NN}$ is a 
bounded sequence in $\GG_k$.
\item
\label{aP9u66fbG07ii}
Assumption~\ref{ass:validcontrol} is in force.
\item
\label{aP9u66fbG07iii}
For every $n\in\NN$, set
\begin{equation}
\label{eP9u66fbG07a}
\begin{array}{l}
\text{for every}\;i\in I_n\\
\left\lfloor
\begin{array}{l}
l^*_{i,n}=\sum_{k\in K}L_{ki}^*v_{k,c_i(n)}^*\\
(a_{i,n},a_{i,n}^*)=
\Big(J_{\gamma_{i,c_i(n)} A_i}\big(x_{i,c_i(n)}+\gamma_{i,c_i(n)}
(z^*_i-l^*_{i,n})\big),\gamma_{i,c_i(n)}^{-1}
(x_{i,c_i(n)}-a_{i,n})-l^*_{i,n}\Big)\\
\end{array}
\right.\\[1mm]
\text{for every}\;i\in I\smallsetminus I_n\\
\left\lfloor
\begin{array}{l}
(a_{i,n},a_{i,n}^*)=(a_{i,n-1},a_{i,n-1}^*)\\
\end{array}
\right.\\[1mm]
\text{for every}\;k\in K_n\\
\left\lfloor
\begin{array}{l}
l_{k,n}=\sum_{i\in I}L_{ki}x_{i,d_k(n)}\\
(b_{k,n},b^*_{k,n})=\Big(r_k+J_{\mu_{k,d_k(n)}B_k}
\big(l_{k,n}+\mu_{k,d_k(n)}v_{k,d_k(n)}^*-r_k\big),
v_{k,d_k(n)}^*+\mu_{k,d_k(n)}^{-1}(l_{k,n}-b_{k,n})\Big)\\
\end{array}
\right.\\[1mm]
\text{for every}\;k\in K\smallsetminus K_n\\
\left\lfloor
\begin{array}{l}
(b_{k,n},b^*_{k,n})=(b_{k,n-1},b^*_{k,n-1}),\\
\end{array}
\right.\\[-5mm]
\end{array}
\end{equation}
and define
\begin{equation}
\label{eccx23gr28t}
(\forall n\in\NN)\quad
a_n=(a_{i,n})_{i\in I},\;\;
a_n^*=(a_{i,n}^*)_{i\in I},\;\;
b_n=(b_{k,n})_{k\in K},\;\;\text{and}\;\;
b_n^*=(b_{k,n}^*)_{k\in K}.
\end{equation}
\end{enumerate}
Then the following hold:
\begin{enumerate}
\item
\label{pP9u66fbG07i}
Define $A$ and $B$ as in 
\eqref{eccx23gr28s}. Then
$(\forall n\in\NN)$ $(a_n,a_n^*)\in\gra A$ \;and\;
$(b_n,b_n^*)\in\gra B$.
\item
\label{pP9u66fbG07ii}
$(a_{n})_{n\in\NN}$,\;
$(a^*_{n})_{n\in\NN}$,\;
$(b_{n})_{n\in\NN}$,\; and\; 
$(b^*_{n})_{n\in\NN}$\; are bounded. 
\item
\label{pP9u66fbG07iii}
Suppose that the following are satisfied:
\begin{enumerate}
\setcounter{enumii}{3}
\item
\label{aP9u66fbG07iv}
$(\forall i\in I)$ $\sum_{n\in\NN}\|x_{i,n+1}-x_{i,n}\|^2<\pinf$ 
and $(\forall k\in K)$
$\sum_{n\in\NN}\|v^*_{k,n+1}-v^*_{k,n}\|^2<\pinf$.
\item
\label{aP9u66fbG07v}
$\varlimsup\big(\sum_{i\in I}\scal{x_{i,n}\!-\!a_{i,n}}
{a_{i,n}^*\!+\!\sum_{k\in K}L_{ki}^*v_{k,n}^*}
+\sum_{k\in K}\scal{\sum_{i\in I}L_{ki}x_{i,n}\!-\!b_{k,n}}
{b_{k,n}^*\!-\!v_{k,n}^*}\big)\leq 0$.
\item
\label{aP9u66fbG07vi}
$(q_n)_{n\in\NN}$ is a strictly 
increasing sequence in $\NN$,
for every $i\in I$, $x_i\in\HH_i$ and
$x_{i,q_n}\weakly x_i$,  and, 
for every $k\in K$, $v^*_k\in \GG_k$ and
$v^*_{k,q_n}\weakly v^*_k$.
\end{enumerate}
Then $((x_i)_{i\in I},(v^*_k)_{k\in K})\in\boldsymbol{Z}$.
\end{enumerate}
\end{proposition}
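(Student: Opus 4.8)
The plan is to read every quantity through the product-space reduction \eqref{eccx23gr28s}, under which Problem~\ref{prob:2} is an instance of Problem~\ref{prob:1} (Proposition~\ref{paHE5j-7h502}), and then to feed the resulting sequences into the asymptotic principle Proposition~\ref{pgre76g12-12}\ref{pgre76g12-12iv}. I would prove \ref{pP9u66fbG07i} by induction on $n$, using $I_0=I$ and $K_0=K$ from \eqref{ekjui7743v10-24G} to initialize. For $i\in I_n$, the resolvent step in \eqref{eP9u66fbG07a} says precisely that $\gamma_{i,c_i(n)}^{-1}(x_{i,c_i(n)}-a_{i,n})+z^*_i-l^*_{i,n}\in A_i a_{i,n}$; inserting the definition of $a^*_{i,n}$ collapses this to $z^*_i+a^*_{i,n}\in A_i a_{i,n}$, which is exactly the $i$-th block of $\gra A$ in \eqref{eccx23gr28s}. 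For $i\notin I_n$ the pair is recycled and the membership is inherited from the previous index, closing the induction; the argument for $B_k$ is identical after translation by $r_k$. For \ref{pP9u66fbG07ii}, recycling means each $(a_{i,n},a^*_{i,n})$ equals a pair freshly produced at the most recent activation, so it suffices to bound the fresh pairs uniformly. Fixing $(p_i,p^*_i)\in\gra A_i$, the identity $J_{\gamma A_i}(p_i+\gamma p^*_i)=p_i$ together with nonexpansiveness of the resolvent bounds $\|a_{i,n}-p_i\|$ by $\|x_{i,c_i(n)}+\gamma_{i,c_i(n)}(z^*_i-l^*_{i,n})-p_i-\gamma_{i,c_i(n)}p^*_i\|$, which is bounded since $(x_{i,n})$ and $(v^*_{k,n})$ are bounded by~\ref{aP9u66fbG07i}, the $\gamma_{i,n}$ lie in $[\varepsilon,1/\varepsilon]$, and the $L_{ki}$ are bounded; boundedness of $a^*_{i,n}=\gamma_{i,c_i(n)}^{-1}(x_{i,c_i(n)}-a_{i,n})-l^*_{i,n}$ follows, and the same reasoning handles $(b_{k,n},b^*_{k,n})$.

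The substance is \ref{pP9u66fbG07iii}, whose goal is to verify the four hypotheses of Proposition~\ref{pgre76g12-12}\ref{pgre76g12-12iv} in $\HH\oplus\GG$ along $(q_n)$ for the pairs of \ref{pP9u66fbG07i}. The preparatory observation is that \eqref{ekjui7743v10-24G} and \eqref{e:lag} confine every data index actually used to form $(a_{i,n},a^*_{i,n})$ or $(b_{k,n},b^*_{k,n})$ to the window $\{n-(M+D-1),\dots,n\}$: the last activation of $A_i$ at or before $n$ lies within $M-1$ of $n$, and its lag is at most $D$. Writing $\pi_i(n)$ and $\rho_k(n)$ for these effective indices, square-summability~\ref{aP9u66fbG07iv} forces consecutive increments of $(x_{i,n})$ and $(v^*_{k,n})$ to vanish, so a sum of at most $M+D-1$ such increments vanishes; hence replacing any delayed datum by its value at index $n$ introduces an error tending strongly to $0$, and combined with \ref{pP9u66fbG07ii} every such error appears multiplied by a bounded factor and is therefore $o(1)$.

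The key step is to turn the resolvent identities into a quadratic lower bound on the expression in \ref{aP9u66fbG07v}. From \eqref{eP9u66fbG07a} one has $a^*_{i,n}+\sum_{k}L^*_{ki}v^*_{k,\pi_i(n)}=\gamma_{i,\pi_i(n)}^{-1}(x_{i,\pi_i(n)}-a_{i,n})$ and $\sum_{i}L_{ki}x_{i,\rho_k(n)}-b_{k,n}=\mu_{k,\rho_k(n)}(b^*_{k,n}-v^*_{k,\rho_k(n)})$. Substituting current iterates for delayed ones up to the $o(1)$ errors above, each primal summand of \ref{aP9u66fbG07v} becomes $\gamma_{i,\pi_i(n)}^{-1}\|x_{i,n}-a_{i,n}\|^2+o(1)\geq\varepsilon\|x_{i,n}-a_{i,n}\|^2+o(1)$ and each dual summand becomes $\mu_{k,\rho_k(n)}\|b^*_{k,n}-v^*_{k,\rho_k(n)}\|^2+o(1)\geq\varepsilon\|b^*_{k,n}-v^*_{k,\rho_k(n)}\|^2+o(1)$, using \ref{ass:validcontroliii}. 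Since \ref{aP9u66fbG07v} forces the $\varlimsup$ of the finite sum to be $\leq 0$ while these squared terms are nonnegative, I conclude that $x_{i,n}-a_{i,n}\to 0$ and, via the window estimate $v^*_{k,\rho_k(n)}-v^*_{k,n}\to 0$, that $b^*_{k,n}-v^*_{k,n}\to 0$ strongly for every $i$ and $k$.

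From these two strong limits the four hypotheses follow. Because $a_{i,q_n}=x_{i,q_n}+o(1)$ and $b^*_{k,q_n}=v^*_{k,q_n}+o(1)$ and weak convergence in a finite direct sum is componentwise, \ref{aP9u66fbG07vi} yields $a_{q_n}\weakly(x_i)_{i\in I}$ and $b^*_{q_n}\weakly(v^*_k)_{k\in K}$. Re-expanding the residuals through the same two identities, $a^*_{i,n}+\sum_{k}L^*_{ki}b^*_{k,n}=\gamma_{i,\pi_i(n)}^{-1}(x_{i,\pi_i(n)}-a_{i,n})+\sum_{k}L^*_{ki}(b^*_{k,n}-v^*_{k,\pi_i(n)})\to 0$ and $\sum_{i}L_{ki}a_{i,n}-b_{k,n}\to 0$, after absorbing all index shifts into $o(1)$ terms; hence $a^*_{q_n}+L^*b^*_{q_n}\to 0$ and $La_{q_n}-b_{q_n}\to 0$. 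Proposition~\ref{pgre76g12-12}\ref{pgre76g12-12iv} then places $((x_i)_{i\in I},(v^*_k)_{k\in K})$ in the Kuhn--Tucker set of Problem~\ref{prob:1}, which by Proposition~\ref{paHE5j-7h502} is exactly $\boldsymbol{Z}$. The main obstacle throughout is the asynchrony bookkeeping: establishing the bounded-window claim, verifying that every delayed-to-current replacement is a legitimate $o(1)$ (which is where boundedness from \ref{pP9u66fbG07ii} is essential), and deriving the quadratic lower bound that lets \ref{aP9u66fbG07v} extract strong convergence of the residuals.
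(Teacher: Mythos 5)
Your proposal is correct and takes essentially the same route as the paper's own proof: the same product-space reduction via \eqref{eccx23gr28s} and Proposition~\ref{paHE5j-7h502}, the same resolvent-characterization and graph-reference-point arguments for \ref{pP9u66fbG07i}--\ref{pP9u66fbG07ii}, and for \ref{pP9u66fbG07iii} the same lag bookkeeping (your $\pi_i(n)$, $\rho_k(n)$ are the paper's $\ell_i(n)$, $\vartheta_k(n)$), the same resolvent identities producing quadratic lower bounds on the summands of \ref{aP9u66fbG07v}, and the same final appeal to Proposition~\ref{pgre76g12-12}\ref{pgre76g12-12iv}. The only differences are organizational: you substitute current iterates for delayed ones directly instead of introducing the paper's auxiliary quantities $\widetilde{\phi}_{i,n}$, $\widetilde{\psi}_{k,n}$, and you recover the dual residual $a^*_n+L^*b^*_n\to 0$ from the primal form of the identity (using $\gamma^{-1}\leq\varepsilon^{-1}$) rather than from its dual form $\widetilde{\phi}_{i,n}=\gamma_{i,\ell_i(n)}\|a_{i,n}^*+\sum_{k\in K}L_{ki}^*v^*_{k,\ell_i(n)}\|^2$.
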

\begin{proof}
Define $\HH$, $\GG$, and $L$ as in \eqref{eccx23gr28s} and set
\begin{equation}
\label{eccx23gr28a}
(\forall n\in\NN)\quad
x_n=(x_{i,n})_{i\in I}\quad\text{and}\quad
v_n^*=(v_{k,n}^*)_{k\in K}.
\end{equation}

\ref{pP9u66fbG07i}: This follows from \eqref{eP9u66fbG07a}
and basic resolvent calculus rules 
\cite[Propositions~23.15 and 23.16]{Livre1}. 

\ref{pP9u66fbG07ii}: 
Let $i\in I$.
We derive from hypothesis~\ref{aP9u66fbG07i} and
Assumption~\ref{ass:validcontrol}\ref{ass:validcontroliii}
that the sequence $\big(x_{i,c_i(n)}-\gamma_{i,c_i(n)}\sum_{k\in K}
L_{ki}^*v_{k,c_i(n)}^*\big)_{n\in\NN}$ is bounded. 
Since the operators $(J_{\gamma_{i,c_i(n)}A_i})_{n\in\NN}$ 
are nonexpansive \cite[Corollary~23.8]{Livre1}, it follows 
from \eqref{eP9u66fbG07a} that $(a_{i,n})_{n\in\NN}$ is 
bounded, and hence that
$(a^*_{i,n})_{n\in\NN}$ is also bounded. Likewise,
for every $k\in K$, 
$(\sum_{i\in I}L_{ki}x_{i,d_k(n)}+\mu_{k,d_k(n)}
v_{k,d_k(n)}^*)_{n\in\NN}$ is bounded and we deduce from
\eqref{eP9u66fbG07a} that $(b_{k,n})_{n\in\NN}$ and 
$(b^*_{k,n})_{n\in\NN}$ are bounded. 
In view of \eqref{eccx23gr28t}, this establishes 
the claim.

\ref{pP9u66fbG07iii}:
For every every $i\in I$ and every $n\in\NN$, define $\bar\ell_i(n)$ 
as the most recent iteration at which a new point in the 
graph of $A_i$ was incorporated into the algorithm, that is,
\begin{equation}
\label{eBnd6Rfdr31a}
(\forall i\in I)(\forall n\in\NN)\quad
\bar\ell_i(n)=\text{max}\menge{j\in S_i}{j\leq n},
\quad\text{where}\quad S_i=\menge{j\in\NN}{i\in I_j}.
\end{equation}
Note that \eqref{eP9u66fbG07a} implies that
\begin{equation}
\label{eBnd6Rfdr31f}
(\forall i\in I)(\forall n\in\NN)\quad(a_{i,n},a^*_{i,n})=
\big(a_{i,\bar\ell_i(n)},a^*_{i,\bar\ell_i(n)}\big).
\end{equation}
For every $i\in I$, \eqref{ekjui7743v10-24G} yields
$\sup_{n\in\NN}(n-\bar\ell_i(n))\leq M$ and hence 
$\lim_{n\rightarrow\pinf}\bar\ell_i(n)=\pinf$.  
Next, we define 
\vspace{-2ex}  
\begin{equation}
\label{e:juin2015a}
(\forall i\in I)(\forall n\in\NN)\quad
\ell_i(n)=c_i\big(\bar\ell_i(n)\big).
\end{equation}
Thus, $\ell_i(n)$ is the iteration from which the computation
of the most recent point in the graph of $A_i$ was initiated.
It follows from \eqref{e:lag} that 
\begin{equation}
(\forall i\in I)(\forall n\in\NN)\quad
n-\ell_i(n)=n-\bar\ell_i(n) +\bar\ell_i(n)-\ell_i(n)\leq M+D.
\end{equation}
Hence, $(\forall i\in I)$ $\lim_{n\to\pinf}\ell_i(n)=\pinf$. Since 
$\text{max}_{i\in I}\sum_{j\in\NN}\|x_{i,j+1}-x_{i,j}\|^2<\pinf$ 
by \ref{aP9u66fbG07iv}, we deduce that
\begin{align}
\label{eaHE5j-7h503h}
(\forall i\in I)\quad 
\|x_{i,n}-x_{i,\ell_i(n)}\|^2
&\leq\left(\sum_{j=\ell_i(n)}^{\ell_i(n)+M+D-1}
\|x_{i,j+1}-x_{i,j}\|\right)^2 \nonumber\\
&\leq (M + D)\sum_{j=\ell_i(n)}^{\ell_i(n)+M+D-1}
\|x_{i,j+1}-x_{i,j}\|^2\nonumber\\
&\leq (M + D) \sum_{j=\ell_i(n)}^{\pinf}\|x_{i,j+1}-x_{i,j}\|^2
\nonumber\\
&\to 0.
\end{align}
Likewise, since \ref{aP9u66fbG07iv} asserts that 
$\text{max}_{k\in K}\sum_{j\in\NN}\|v^*_{k,j+1}-v^*_{k,j}\|^2<\pinf$,
we have 
\begin{equation}
\label{eaHE5j-7h503d}
(\forall i\in I)(\forall k\in K)\quad 
\|v^*_{k,n}-v^*_{k,\ell_i(n)}\|^2\leq
(M+D)\sum_{j=\ell_i(n)}^{\pinf}\|v^*_{k,j+1}-v^*_{k,j}\|^2\to 0.
\end{equation}
Next, let us set 
\begin{equation}
\label{eaHE5j-7h504b}
(\forall i\in I)(\forall n\in\NN)\quad
\begin{cases}
\phi_{i,n}=\Scal{x_{i,n}-a_{i,n}}{a_{i,n}^*+\Sum_{k\in K}
L_{ki}^*v^*_{k,n}}\\[4mm]
\widetilde{\phi}_{i,n}=\Scal{x_{i,\ell_i(n)}-a_{i,\bar\ell_i(n)}}
{a_{i,\bar\ell_i(n)}^*+\Sum_{k\in K}L_{ki}^*v^*_{k,\ell_i(n)}}.
\end{cases}
\end{equation}
Then it follows from \eqref{eBnd6Rfdr31f},
\ref{aP9u66fbG07i}, \ref{pP9u66fbG07ii},
\eqref{eaHE5j-7h503h}, and \eqref{eaHE5j-7h503d} that
\begin{align}
\label{eaHE5j-7h504c}
(\forall i\in I)\quad
\phi_{i,n}-\widetilde{\phi}_{i,n}
&=\Scal{x_{i,n}-a_{i,n}}{a_{i,n}^*+\sum_{k\in K}
L_{ki}^*v^*_{k,n}}\nonumber\\
&\quad\;-\Scal{x_{i,\ell_i(n)}-a_{i,n}}
{a_{i,n}^*+\sum_{k\in K}L_{ki}^*v^*_{k,\ell_i(n)}}
\nonumber\\
&=\Scal{x_{i,n}-a_{i,n}}{\sum_{k\in K}L_{ki}^*
(v^*_{k,n}-v^*_{k,\ell_i(n)})}\nonumber\\
&\quad\;+\Scal{x_{i,n}-x_{i,\ell_i(n)}}
{a_{i,n}^*+\sum_{k\in K}L_{ki}^*v^*_{k,\ell_i(n)}}
\nonumber\\
&\leq\bigg(\sum_{k\in K}\|L_{ki}\|\sup_{j\in\NN}
\big(\|x_{i,j}\|+\|a_{i,j}\|\big)\bigg)
\|v^*_{k,n}-v^*_{k,\ell_i(n)}\|\nonumber\\
&\quad\;+\bigg(\sup_{j\in\NN}\|a_{i,j}^*\|+
\sum_{k\in K}\|L_{ki}\|\sup_{j\in\NN}\|v^*_{k,j}\|
\bigg)\|x_{i,n}-x_{i,\ell_i(n)}\|
\nonumber\\
&\to 0.
\end{align}
We also derive from \eqref{eaHE5j-7h504b},
\eqref{eBnd6Rfdr31f}, and \eqref{eP9u66fbG07a} that
\begin{align}
\label{eaHE5j-7h504r}
(\forall i\in I)(\forall n\in\NN)\quad\widetilde{\phi}_{i,n}
&=\Scal{x_{i,\ell_i(n)}-a_{i,n}}
{a_{i,n}^*+\sum_{k\in K}L_{ki}^*v^*_{k,\ell_i(n)}}\nonumber\\
&=\gamma_{i,\ell_i(n)}^{-1}\|x_{i,\ell_i(n)}-a_{i,n}\|^2\nonumber\\
&=\gamma_{i,\ell_i(n)}\bigg\|a_{i,n}^*+\sum_{k\in K}L_{ki}^*
v^*_{k,\ell_i(n)}\bigg\|^2,
\end{align}
which yields
\begin{equation}
\label{eaHE5j-7h503s}
(\forall i\in I)(\forall n\in\NN)\quad
\widetilde{\phi}_{i,n}=\gamma_{i,\ell_i(n)}^{-1}
\|x_{i,\ell_i(n)}-a_{i,n}\|^2\geq
\varepsilon\|x_{i,\ell_i(n)}-a_{i,n}\|^2
\end{equation}
and
\begin{equation}
\label{eaHE5j-7h503v}
(\forall i\in I)(\forall n\in\NN)\quad
\widetilde{\phi}_{i,n}=
\gamma_{i,\ell_i(n)}\bigg\|a_{i,n}^*+\Sum_{k\in K}L_{ki}^*
v^*_{k,\ell_i(n)}\bigg\|^2
\geq\varepsilon\bigg\|a_{i,n}^*+
\Sum_{k\in K}L_{ki}^*v^*_{k,\ell_i(n)}\bigg\|^2.
\end{equation}
It follows from \eqref{eaHE5j-7h503s} that
\begin{align}
\label{eaHE5j-7h505n}
(\forall i\in I)(\forall n\in\NN)\quad
\|x_{i,n}-a_{i,n}\|^2
&\leq 2\big(\|x_{i,n}-x_{i,\ell_i(n)}\|^2
+\|x_{i,\ell_i(n)}-a_{i,n}\|^2\big)
\nonumber\\
&\leq2\big(\|x_{i,n}-x_{i,\ell_i(n)}\|^2+
\varepsilon^{-1}(\widetilde{\phi}_{i,n}-\phi_{i,n})
+\varepsilon^{-1}\phi_{i,n}\big)
\end{align}
and from \eqref{eaHE5j-7h503v} that
\begin{align}
\label{eaHE5j-7h505m}
&\hskip -6mm (\forall i\in I)(\forall n\in\NN)\;\;
\bigg\|a_{i,n}^*+\Sum_{k\in K}L_{ki}^*v^*_{k,n}\bigg\|^2
\nonumber\\
&\hskip 32mm\leq 
\bigg\|a_{i,n}^*+\Sum_{k\in K}L_{ki}^*v^*_{k,\ell_i(n)}-
\Sum_{k\in K}L_{ki}^*(v^*_{k,\ell_i(n)}-v^*_{k,n})\bigg\|^2
\nonumber\\
&\hskip 32mm\leq 2\bigg(\bigg\|a_{i,n}^*+
\Sum_{k\in K}L_{ki}^*v^*_{k,\ell_i(n)}\bigg\|^2+\Sum_{k\in K}
\|L_{ki}\|^2\,\|v^*_{k,\ell_i(n)}-v^*_{k,n}\|^2\bigg)
\nonumber\\
&\hskip 32mm\leq2\bigg(\varepsilon^{-1}
(\widetilde{\phi}_{i,n}-\phi_{i,n})
+\varepsilon^{-1}\phi_{i,n}
+\Sum_{k\in K}\|L_{ki}\|^2\,\|v^*_{k,\ell_i(n)}-v^*_{k,n}\|^2
\bigg).
\end{align}
We now perform a similar analysis for the operators 
$(B_k)_{k\in K}$. Much as in 
\eqref{eBnd6Rfdr31a}, for every $k\in K$ and every
$n\in\NN$, define $\bar\vartheta_k(n)$ as the most recent 
iteration at which a new point in the graph of $B_k$ was 
incorporated into the algorithm, that is,
\begin{equation}
\label{eccx23gr24A}
(\forall k\in K)(\forall n\in\NN)\quad
\bar\vartheta_k(n)=\text{max}\menge{j\in T_k}{j\leq n},
\quad\text{where}\quad T_k=\menge{j\in\NN}{k\in K_j},
\end{equation}
and observe that
\begin{equation}
\label{eBnd6Rfdr31g}
(\forall k\in K)(\forall n\in\NN)\quad (b_{k,n},b^*_{k,n})=
\big(b_{k,\bar\vartheta_k(n)},b^*_{k,\bar\vartheta_k(n)}\big).
\end{equation}
Next, we define 
\begin{equation}
\label{e:juin2015b}
(\forall k\in K)(\forall n\in\NN)\quad
\vartheta_k(n)=d_k\big(\bar\vartheta_k(n)\big).
\end{equation}
Then, we derive from \eqref{ekjui7743v10-24G} and \eqref{e:lag} that
\begin{equation}
(\forall k\in K)(\forall n\in\NN)\quad n-\vartheta_k(n)=
n-\bar\vartheta_k(n)+\bar\vartheta_k(n)-\vartheta_k(n)\leq M+D
\end{equation}
and therefore that $(\forall k\in K)$ 
$\lim_{k\to\pinf}\vartheta_k(n)=\pinf$.
Since 
$\text{max}_{i\in I}\sum_{j\in\NN}\|x_{i,j+1}-x_{i,j}\|^2<\pinf$ 
by \ref{aP9u66fbG07iv}, we then deduce that 
\begin{equation}
\label{eaHE5j-7h503t}
(\forall k\in K)\quad 
\|x_{i,n}-x_{i,\vartheta_k(n)}\|^2\leq
(M+D)\sum_{j=\vartheta_k(n)}^{\pinf}\|x_{i,j+1}-x_{i,j}\|^2\to 0.
\end{equation}
Similarly since, $\text{max}_{k\in K}
\sum_{j\in\NN}\|v^*_{k,j+1}-v^*_{k,j}\|^2<\pinf$, we have
\begin{equation}
\label{eaHE5j-7h503b}
(\forall k\in K)\quad 
\|v^*_{k,n}-v^*_{k,\vartheta_k(n)}\|^2\leq
(M+D)\sum_{j=\vartheta_k(n)}^{\pinf}\|v^*_{k,j+1}-v^*_{k,j}\|^2\to 0.
\end{equation}
Now, let us set
\begin{equation}
\label{eccx23gr24b}
(\forall k\in K)(\forall n\in\NN)\quad
\begin{cases}
\psi_{k,n}=
\Scal{\Sum_{i\in I}
L_{ki}x_{i,n}-b_{k,n}}{b_{k,n}^*-v_{k,n}^*}\\[4mm]
\widetilde{\psi}_{k,n}=
\Scal{\Sum_{i\in I}L_{ki}x_{i,\vartheta_k(n)}-
b_{k,\bar\vartheta_k(n)}}
{b_{k,\bar\vartheta_k(n)}^*-v_{k,\vartheta_k(n)}^*}.
\end{cases}
\end{equation}
Then it follows from \eqref{eBnd6Rfdr31g},  
\ref{aP9u66fbG07i}, \ref{pP9u66fbG07ii},
\eqref{eaHE5j-7h503t}, and \eqref{eaHE5j-7h503b} that
\begin{align}
\label{eccx23gr24o}
(\forall k\in K)\quad
\psi_{k,n}-\widetilde{\psi}_{k,n}
&=\Scal{\sum_{i\in I}L_{ki}x_{i,n}-b_{k,n}}{b_{k,n}^*-v_{k,n}^*}
\nonumber\\
&\quad\;-\Scal{\sum_{i\in I}L_{ki}x_{i,\vartheta_k(n)}-
b_{k,n}}{b_{k,n}^*
-v_{k,\vartheta_k(n)}^*}\nonumber\\
&=\Scal{\sum_{i\in I}L_{ki}(x_{i,n}-x_{i,\vartheta_k(n)})}
{b_{k,n}^*-v_{k,n}^*}\nonumber\\
&\quad\;+\Scal{\sum_{i\in I}L_{ki}x_{i,\vartheta_k(n)}-b_{k,n}}
{v^*_{k,\vartheta_k(n)}-v_{k,n}^*}\nonumber\\
&\leq\bigg(\sum_{i\in I}\|L_{ki}\|
\sup_{j\in\NN}\big(\|b_{k,j}^*\|+\|v_{k,j}^*\|
\big)\bigg)\|x_{i,n}-x_{i,\vartheta_k(n)}\|\nonumber\\
&\quad\;+\sup_{j\in\NN}\bigg(\sum_{i\in I}\|L_{ki}\|\,
\|x_{i,\vartheta_k(j)}\|+\|b_{k,j}\|\bigg)
\|v^*_{k,n}-v_{k,\vartheta_k(n)}^*\|\nonumber\\
&\to 0.
\end{align}
In addition, \eqref{eBnd6Rfdr31g} and \eqref{eP9u66fbG07a} 
yield
\begin{align}
\label{eccx23gr24c}
(\forall k\in K)(\forall n\in\NN)\quad\widetilde{\psi}_{k,n}
&=\Scal{\sum_{i\in I}L_{ki}x_{i,\vartheta_k(n)}-b_{k,n}}
{b_{k,n}^*-v_{k,\vartheta_k(n)}^*}\nonumber\\
&=\mu_{k,\vartheta_k(n)}^{-1}
\bigg\|\Sum_{i\in I}L_{ki}x_{i,\vartheta_k(n)}-b_{k,n}\bigg\|^2
\nonumber\\
&=\mu_{k,{\vartheta_k(n)}}\|b_{k,n}^*-v_{k,{\vartheta_k(n)}}^*\|^2.
\end{align}
Consequently,
\begin{equation}
\label{eaHE5j-7h507u}
(\forall k\in K)(\forall n\in\NN)\quad
\widetilde{\psi}_{k,n}=\dfrac{\bigg\|\Sum_{i\in I}
L_{ki}x_{i,\vartheta_k(n)}-
b_{k,{\vartheta_k(n)}}\bigg\|^2}
{\mu_{k,{\vartheta_k(n)}}}\geq\varepsilon
\bigg\|\Sum_{i\in I}L_{ki}x_{i,\vartheta_k(n)}-
b_{k,\vartheta_k(n)}\bigg\|^2
\end{equation}
and
\begin{equation}
\label{eaHE5j-7h507t}
(\forall k\in K)(\forall n\in\NN)\quad
\widetilde{\psi}_{k,n}=\mu_{k,{\vartheta_k(n)}}
\|b_{k,n}^*-v_{k,{\vartheta_k(n)}}^*\|^2
\geq\varepsilon\|b_{k,n}^*-v_{k,{\vartheta_k(n)}}^*\|^2.
\end{equation}
It follows from \eqref{eaHE5j-7h507u} that
\begin{align}
\label{eaHE5j-7h505q}
&\hskip -6mm 
(\forall k\in K)(\forall n\in\NN)\quad
\bigg\|\Sum_{i\in I}L_{ki}x_{i,n}-b_{k,n}\bigg\|^2
\nonumber\\
&\hskip 32mm=
\bigg\|\Sum_{i\in I}L_{ki}(x_{i,n}-x_{i,\vartheta_k(n)})
+\Sum_{i\in I}L_{ki}x_{i,\vartheta_k(n)}-b_{k,n}\bigg\|^2
\nonumber\\
&\hskip 32mm\leq 2\bigg(\Sum_{i\in I}
\|L_{ki}\|^2\,\|x_{i,n}-x_{i,\vartheta_k(n)}\|^2+
\bigg\|\Sum_{i\in I}L_{ki}x_{i,\vartheta_k(n)}-b_{k,n}\bigg\|^2
\bigg)
\nonumber\\
&\hskip 32mm\leq 2\bigg(
\Sum_{i\in I}\|L_{ki}\|^2\,\|x_{i,n}-x_{i,\vartheta_k(n)}\|^2+
\varepsilon^{-1}(\widetilde{\psi}_{k,n}-\psi_{k,n})
+\varepsilon^{-1}\psi_{k,n}\bigg),
\end{align}
and from \eqref{eaHE5j-7h507t} that
\begin{align}
\label{eaHE5j-7h505p}
(\forall k\in K)(\forall n\in\NN)\quad
\|b^*_{k,n}-v^*_{k,n}\|^2
&\leq 2\big(\|b_{k,n}^*-v_{k,{\vartheta_k(n)}}^*\|^2+
\|v_{k,n}^*-v_{k,{\vartheta_k(n)}}^*\|^2\big)
\nonumber\\
&\leq 2\big(\varepsilon^{-1}(\widetilde{\psi}_{k,n}-\psi_{k,n})
+\varepsilon^{-1}\psi_{k,n}+
\|v_{k,n}^*-v_{k,{\vartheta_k(n)}}^*\|^2\big).
\end{align}
On the one hand, we derive from \eqref{eccx23gr28a},
\eqref{eaHE5j-7h505n}, and \eqref{eaHE5j-7h505p} that
\begin{align}
\label{eaHE5j-7h505z}
(\forall n\in\NN)\quad
\|x_n-a_n\|^2+\|b_n^*-v_n^*\|^2
&=\sum_{i\in I}\|x_{i,n}-a_{i,n}\|^2+
\sum_{k\in K}\|b^*_{k,n}-v^*_{k,n}\|^2\nonumber\\
&\leq2\sum_{i\in I}\|x_{i,n}-x_{i,\ell_i(n)}\|^2+
2\sum_{k\in K}\|v^*_{k,n}-v^*_{k,\vartheta_k(n)}\|^2
\nonumber\\
&\quad\;
+2\varepsilon^{-1}\sum_{i\in I}(\widetilde{\phi}_{i,n}-\phi_{i,n})
+2\varepsilon^{-1}\sum_{k\in K}(\widetilde{\psi}_{k,n}-\psi_{k,n})
\nonumber\\
&\quad\;
+2\varepsilon^{-1}\bigg(\sum_{i\in I}\phi_{i,n}+
\sum_{k\in K}\psi_{k,n}\bigg).
\end{align}
On the other hand, we derive from \eqref{eccx23gr28a},
\eqref{eaHE5j-7h505m}, and \eqref{eaHE5j-7h505q} that
\begin{align}
\label{eaHE5j-7h505y}
&\hskip -6mm (\forall n\in\NN)\quad
\|a_n^*+L^*v^*_n\|^2+\|Lx_n-b_n\|^2\nonumber\\
&\hskip 21mm=\sum_{i\in I}\bigg\|a_{i,n}^*+\sum_{k\in K}
L^*_{ki}v^*_{k,n}\bigg\|^2+\sum_{k\in K}\bigg\|\sum_{i\in I}
L_{ki}x_{i,n}-b_{k,n}\bigg\|^2\nonumber\\
&\hskip 21mm\leq 2\varepsilon^{-1}\Sum_{i\in I}
(\widetilde{\phi}_{i,n}-\phi_{i,n})+2\Sum_{i\in I}\Sum_{k\in K}
\|L_{ki}\|^2\,\|v^*_{k,\ell_i(n)}-v^*_{k,n}\|^2\nonumber\\
&\hskip 21mm\quad\;+2\Sum_{k\in K}\Sum_{i\in I}\|L_{ki}\|^2\,
\|x_{i,n}-x_{i,\vartheta_k(n)}\|^2+2\varepsilon^{-1}
\Sum_{k\in K}(\widetilde{\psi}_{k,n}-\psi_{k,n})\nonumber\\
&\hskip 21mm\quad\;
+2\varepsilon^{-1}\bigg(\sum_{i\in I}\phi_{i,n}+
\sum_{k\in K}\psi_{k,n}\bigg).
\end{align}
We deduce from 
\eqref{eaHE5j-7h504b}, \eqref{eccx23gr24b},
\eqref{eccx23gr28t}, \eqref{eccx23gr28a}, 
and \ref{aP9u66fbG07v} that 
\begin{equation}
\label{eccx23gr26u}
\varlimsup\bigg(\sum_{i\in I}{\phi}_{i,n}+
\sum_{k\in K}{\psi}_{k,n}\bigg)=
\varlimsup\big(\scal{x_n-a_n}{a_n^*+L^*v_n^*}
+\scal{Lx_n-b_n}{b_n^*-v_n^*}\big)\leq 0.
\end{equation}
Altogether, taking the limit superior in \eqref{eaHE5j-7h505z} 
and \eqref{eaHE5j-7h505y}, and using 
\eqref{eaHE5j-7h503h}, 
\eqref{eaHE5j-7h503b}, 
\eqref{eaHE5j-7h504c}, 
\eqref{eccx23gr24o}, 
\eqref{eaHE5j-7h503d}, 
\eqref{eaHE5j-7h503t}, 
and 
\eqref{eccx23gr26u}, 
we obtain
\begin{equation}
\label{eaHE5j-7h503w}
x_n-a_n\to 0,\quad
a_n^*+L^*v^*_n\to 0,\quad
Lx_n-b_n\to 0,
\quad\text{and}\quad b_n^*-v_n^*\to 0.
\end{equation}
Now set 
$x=(x_i)_{i\in I}$ and $v^*=(v_k^*)_{k\in K}$. 
Then \ref{aP9u66fbG07vi} and \eqref{eaHE5j-7h503w} yield
\begin{equation}
\label{eccx23gr28v}
a_{q_n}\weakly x,\quad b_{q_n}^*\weakly v^*,\quad 
a_{q_n}^*+L^*b_{q_n}^*\to 0,\quad\text{and}\quad 
La_{q_n}-b_{q_n}\to 0. 
\end{equation}
In turn, \ref{pP9u66fbG07i} and
Proposition~\ref{pgre76g12-12}\ref{pgre76g12-12iv} imply 
that $(x,v^*)\in\boldsymbol{Z}$. 
\end{proof}

\begin{remark}
\label{rFEW7d23}
In \eqref{eP9u66fbG07a}, the resolvents are assumed 
to be computed exactly to simplify the presentation. 
However, it is possible to allow for 
relative errors in these computations in the 
spirit of \cite[Algorithm~3]{Svai09}. More precisely,
we can replace the calculation
\begin{equation}
\label{e:xactAi}
(a_{i,n},a_{i,n}^*)=
\Big(J_{\gamma_{i,c_i(n)} A_i}\big(x_{i,c_i(n)}+\gamma_{i,c_i(n)}
(z^*_i-l^*_{i,n})\big),\gamma_{i,c_i(n)}^{-1}
(x_{i,c_i(n)}-a_{i,n})-l^*_{i,n}\Big)
\end{equation}
by any choice of $(a_{i,n},a_{i,n}^*)\in\HH_i^2$ such that
\begin{equation}
\label{e:rrordef}
(a_{i,n},z^*_i+a_{i,n}^*)\in\gra A_i\quad\text{and}\quad
a_{i,n}+\gamma_{i,c_i(n)}a_{i,n}^*=x_{i,c_i(n)}
-\gamma_{i,c_i(n)}l^*_{i,n}+e_{i,n},
\end{equation}
where the error $e_{i,n}$ satisfies 
\begin{equation}
\label{e:rrorcond}
\begin{cases}
\|e_{i,n}\|\leq\beta\\
\scal{e_{i,n}}{a^*_{i,n}+l^*_{i,n}}
\leq\sigma\gamma_{i,c_i(n)}
\|a^*_{i,n}+l^*_{i,n}\|^2\\
\scal{x_{i,c_i(n)}-a_{i,n}}{e_{i,n}}
\geq-\sigma 
\|x_{i,c_i(n)}-a_{i,n}\|^2
\end{cases}
\end{equation}
for some constants $\beta\in\RPP$ and $\sigma\in\zeroun$ that
are independent of $i$ and $n$. It follows from 
\cite[Proposition~23.21]{Livre1} 
that \eqref{e:rrordef} can also be written as
\begin{equation}
\label{eFEW7d25}
(a_{i,n},a_{i,n}^*)=
\Big(J_{\gamma_{i,c_i(n)} A_i}\big(x_{i,c_i(n)}+\gamma_{i,c_i(n)}
(z^*_i-l^*_{i,n})+e_{i,n}\big),\gamma_{i,c_i(n)}^{-1}
(x_{i,c_i(n)}-a_{i,n}+e_{i,n})-l^*_{i,n}\Big).
\end{equation}
It may easily be seen that the calculations \eqref{e:xactAi}  
satisfy \eqref{e:rrordef} with $e_{i,n}=0$, trivially fulfilling
\eqref{e:rrorcond}. In the setting of
\eqref{e:rrorcond}, \eqref{eaHE5j-7h503s} becomes
\begin{align}
\label{eFEW7d14a}
(\forall i\in I)(\forall n\in\NN)\quad
\widetilde{\phi}_{i,n}
&=\gamma_{i,\ell_i(n)}^{-1}\big(\|x_{i,\ell_i(n)}-a_{i,n}\|^2
+\scal{x_{i,\ell_i(n)}-a_{i,n}}{e_{i,n}}\big)\nonumber\\
&\geq\varepsilon(1-\sigma)\|x_{i,\ell_i(n)}-a_{i,n}\|^2
\end{align}
and \eqref{eaHE5j-7h503v} becomes
\begin{align}
\label{eFEW7d14b}
(\forall i\in I)(\forall n\in\NN)\quad
\widetilde{\phi}_{i,n}
&=\gamma_{i,\ell_i(n)}\bigg\|a_{i,n}^*+\Sum_{k\in K}L_{ki}^*
v^*_{k,\ell_i(n)}\bigg\|^2-\Scal{e_{i,n}}
{a_{i,n}^*+\Sum_{k\in K}L_{ki}^*v^*_{k,\ell_i(n)}}\nonumber\\
&\geq\varepsilon(1-\sigma)\bigg\|a_{i,n}^*+
\Sum_{k\in K}L_{ki}^*v^*_{k,\ell_i(n)}\bigg\|^2.
\end{align}
Likewise, we can replace the calculation
\begin{equation}
\label{e:xactBk}
(b_{k,n},b^*_{k,n})=\Big(r_k+J_{\mu_{k,d_k(n)}B_k}
\big(l_{k,n}+\mu_{k,d_k(n)}v_{k,d_k(n)}^*-r_k\big),
v_{k,d_k(n)}^*+\mu_{k,d_k(n)}^{-1}(l_{k,n}-b_{k,n})\Big)
\end{equation}
by any choice of $(b_{k,n},b_{k,n}^*)\in\GG_k^2$ such that
\begin{equation}
\label{errordef}
(b_{k,n}-r_k,b_{k,n}^*)\in\gra B_k\quad\text{and}\quad
b_{k,n}+\mu_{k,d_k(n)} b^*_{k,n}
=l_{k,n}+\mu_{k,d_k(n)}v_{k,d_k(n)}^*+f_{k,n},
\end{equation}
where the error $f_{k,n}$ satisfies 
\begin{equation}
\begin{cases}
\|f_{k,n}\|\leq\delta\\
\scal{l_{k,n}-b_{k,n}}{f_{k,n}}\geq-\zeta\|l_{k,n}-b_{k,n}\|^2\\
\scal{f_{k,n}}{b_{k,n}^*-v_{k,d_k(n)}^*}\leq\zeta\mu_{k,d_k(n)}
\|b^*_{k,n}-v^*_{k,d_k(n)}\|^2
\end{cases}
\label{errorcond}
\end{equation}
for some constants $\delta\in\RPP$ and $\zeta\in\zeroun$ that are
independent of $k$ and $n$. Altogether, the effect of such
approximate resolvent evaluations is to replace $\varepsilon^{-1}$
by $\varepsilon^{-1}(1-\sigma)^{-1}$ or
$\varepsilon^{-1}(1-\zeta)^{-1}$ in
\eqref{eaHE5j-7h505z}--\eqref{eaHE5j-7h505y}, with the
remainder of the proof of Proposition~\ref{pP9u66fbG07}
remaining unchanged.
\end{remark}

\subsection{A weakly convergent algorithm for finding a Kuhn-Tucker
point}

We propose a Fej\'er monotone primal-dual algorithm based on 
the results of Section~\ref{sec:fejer} to find a point in 
the Kuhn-Tucker set \eqref{eferw7er3h02-26k}.

\begin{algorithm}
\label{algo:3}
Consider the setting of Problem~\ref{prob:2}, let $\KKK$ 
be a closed vector subspace of 
$\bigoplus_{i\in I}\HH_i\oplus\bigoplus_{k\in K}\GG_k$ such that
$\boldsymbol{Z}\subset\KKK$, and suppose that 
Assumption~\ref{ass:validcontrol} is in force. Let 
$(\lambda_n)_{n\in\NN}\in
\left[\varepsilon,2-\varepsilon\right]^{\NN}$, let 
$((x_{i,0})_{i\in I},(v^*_{k,0})_{k\in K})\in\KKK$, and 
iterate
\begin{equation}
\label{eaHE5j-7h503a}
\begin{array}{l}
\text{for}\;n=0,1,\ldots\\
\left\lfloor
\begin{array}{l}
\begin{array}{l}
\text{for every}\;i\in I_n\\
\left\lfloor
\begin{array}{l}
l^*_{i,n}=\sum_{k\in K}L_{ki}^*v_{k,c_i(n)}^*\\
(a_{i,n},a_{i,n}^*)=
\Big(J_{\gamma_{i,c_i(n)} A_i}\big(x_{i,c_i(n)}+\gamma_{i,c_i(n)}
(z^*_i-l^*_{i,n})\big),\gamma_{i,c_i(n)}^{-1}
(x_{i,c_i(n)}-a_{i,n})-l^*_{i,n}\Big)\\
\end{array}
\right.\\[1mm]
\text{for every}\;i\in I\smallsetminus I_n\\
\left\lfloor
\begin{array}{l}
(a_{i,n},a_{i,n}^*)=(a_{i,n-1},a_{i,n-1}^*)\\
\end{array}
\right.\\[1mm]
\text{for every}\;k\in K_n\\
\left\lfloor
\begin{array}{l}
l_{k,n}=\sum_{i\in I}L_{ki}x_{i,d_k(n)}\\
(b_{k,n},b^*_{k,n})=\Big(r_k+J_{\mu_{k,d_k(n)}B_k}
\big(l_{k,n}+\mu_{k,d_k(n)}v_{k,d_k(n)}^*-r_k\big),
v_{k,d_k(n)}^*+\mu_{k,d_k(n)}^{-1}(l_{k,n}-b_{k,n})\Big)\\
\end{array}
\right.\\[1mm]
\text{for every}\;k\in K\smallsetminus K_n\\
\left\lfloor
\begin{array}{l}
(b_{k,n},b^*_{k,n})=(b_{k,n-1},b^*_{k,n-1})\\
\end{array}
\right.\\[1mm]
\big((t^*_{i,n})_{i\in I},(t_{k,n})_{k\in K}\big)=P_{\KKK}
\big((a^*_{i,n}+\sum_{k\in K}L_{ki}^*b^*_{k,n})_{i\in I},
(b_{k,n}-\sum_{i\in I}L_{ki}a_{i,n})_{k\in K}\big)\\
\tau_n=\sum_{i\in I}\|t_{i,n}^*\|^2+\sum_{k\in K}\|t_{k,n}\|^2\\
\text{if}\;\tau_n>0\\
\left\lfloor
\begin{array}{l}
\theta_n=\Frac{\lambda_n}{\tau_n}\,\text{\rm max} 
\braces{0,\sum_{i\in I}\big(\scal{x_{i,n}}{t^*_{i,n}}-
\scal{a_{i,n}}{a^*_{i,n}}\big)+\sum_{k\in K}
\big(\scal{t_{k,n}}{v^*_{k,n}} -\scal{b_{k,n}}{b^*_{k,n}}\big)}\\
\end{array}
\right.\\
\text{else~} \theta_n = 0 \\
\text{for every}\;i\in I\\
\left\lfloor
\begin{array}{l}
x_{i,n+1}=x_{i,n}-\theta_n t^*_{i,n}\\
\end{array}
\right.\\
\text{for every}\;k\in K\\
\left\lfloor
\begin{array}{l}
v^*_{k,n+1}=v^*_{k,n}-\theta_n t_{k,n}.
\end{array}
\right.\\
\end{array}
\end{array}
\right.\\[4mm]
\end{array}
\end{equation}
\end{algorithm}

\begin{remark}
\label{rferw7er3h07-12b}
When Problem~\ref{prob:1} has no special structure, on can take
$\KKK=\bigoplus_{i\in I}\HH_i\oplus\bigoplus_{k\in K}\GG_k$ 
in Algorithm~\ref{algo:3}. In other instances, it may be
advantageous computationally to use a suitable proper subspace 
$\KKK$. For instance, if $I=\{1\}$, $z^*_1=0$, and 
$A_1\colon\HH_1\to\HH_1$ is linear, then \eqref{eferw7er3h02-26k} 
reduces to
\begin{multline}
\label{eferw7er3h07-12a}
\boldsymbol{Z}=
\bigg\{\big(x_1,(v^*_k)_{k\in K}\big)\;\bigg |\;
x_1\in\HH_1,\;
A_1x_1+\sum_{k\in K}L_{k1}^*v_k^*=0,\:\;\text{and}\\
(\forall k\in K)\;\;v_k^*\in\GG_k\;\;\text{and}\;\;
L_{k1}x_1-r_k\in B_k^{-1}v_k^*\bigg\},
\end{multline}
and we can use
\begin{equation}
\label{eferw7er3h07-12b}
\KKK=\bigg\{\big(x_1,(v^*_k)_{k\in K}\big)\in
\HH_1\oplus\bigoplus_{k\in K}\GG_k\;\bigg |\;
A_1x_1+\sum_{k\in K}L_{k1}^*v_k^*=0\bigg\}.
\end{equation}
In effect, this approach was adopted in \cite{Svai09} in the
further special case in which $A_1=0$ and $(\forall k\in K)$
$\GG_k=\HH_1$, $L_{k1}=\Id$, and $r_k=0$.
\end{remark}

\begin{theorem}
\label{taHE5j-7h503}
Consider the setting of Problem~\ref{prob:2} and
Algorithm~\ref{algo:3}, suppose that
$\mathscr{P}\neq\emp$, and let
\begin{equation}
\label{eccx23gr28u}
(\forall n\in\NN)\quad
x_n=(x_{i,n})_{i\in I}\quad\text{and}\quad
v_n^*=(v_{k,n}^*)_{k\in K}.
\end{equation}
Then $(x_n)_{n\in\NN}$ converges weakly to a point 
$\overline{x}\in\mathscr{P}$, $(v^*_n)_{n\in\NN}$ converges weakly 
to a point $\overline{v}\in\mathscr{D}$, and 
$(\overline{x},\overline{v}^*)\in\boldsymbol{Z}$.
\end{theorem}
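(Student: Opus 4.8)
The plan is to recognize Algorithm~\ref{algo:3} as a concrete realization of the abstract Fej\'er iteration of Algorithm~\ref{algo:1}, executed in the product space furnished by Proposition~\ref{paHE5j-7h502}, and then to chain together the convergence results already established for that abstract iteration with the asymptotic principle of Proposition~\ref{pP9u66fbG07}. The analytic difficulty stemming from block activation, recycling of stale points, and asynchronous lags has all been absorbed into Proposition~\ref{pP9u66fbG07}; what remains for the theorem is to set up the correspondence and to invoke the available results in a logically admissible order.

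First I would introduce the product-space data $\HH$, $\GG$, $L$, $A$, $B$ of \eqref{eccx23gr28s} together with the aggregated vectors $x_n=(x_{i,n})_{i\in I}$, $v_n^*=(v_{k,n}^*)_{k\in K}$, $a_n=(a_{i,n})_{i\in I}$, $a_n^*=(a_{i,n}^*)_{i\in I}$, $b_n=(b_{k,n})_{k\in K}$, and $b_n^*=(b_{k,n}^*)_{k\in K}$. A bookkeeping check then shows that, under this identification, $a_n^*+L^*b_n^*=(a_{i,n}^*+\sum_{k\in K}L_{ki}^*b_{k,n}^*)_{i\in I}$ and $b_n-La_n=(b_{k,n}-\sum_{i\in I}L_{ki}a_{i,n})_{k\in K}$ coincide with the arguments projected onto $\KKK$ in \eqref{eaHE5j-7h503a}; that the product-space pairings $\scal{x_n}{t_n^*}$, $\scal{t_n}{v_n^*}$, $\scal{a_n}{a_n^*}$, $\scal{b_n}{b_n^*}$ reproduce the four block sums defining $\theta_n$; and that the separate block updates amount to the single step $(x_{n+1},v_{n+1}^*)=(x_n,v_n^*)-\theta_n(t_n^*,t_n)$. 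Crucially, Proposition~\ref{pP9u66fbG07}\ref{pP9u66fbG07i} guarantees $(a_n,a_n^*)\in\gra A$ and $(b_n,b_n^*)\in\gra B$ for every $n$, so the construction of \eqref{eP9u66fbG07a} legitimately supplies the graph points that Algorithm~\ref{algo:1} requires. Hence Algorithm~\ref{algo:3} is an instance of Algorithm~\ref{algo:1} applied to Problem~\ref{prob:1}, with $\mathscr{P}\neq\emp$ by hypothesis.

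The core of the argument is then a bootstrapping chain verifying, in the right order, the premises feeding Proposition~\ref{p:2013-05-22}\ref{p:2013-05-22v}. From Proposition~\ref{p:2013-05-22}\ref{p:2013-05-22i} the pair sequence is Fej\'er monotone with respect to $\boldsymbol{Z}$, hence bounded, which yields hypothesis~\ref{aP9u66fbG07i} of Proposition~\ref{pP9u66fbG07}; from Proposition~\ref{p:2013-05-22}\ref{p:2013-05-22iii} the increments are square-summable, which is hypothesis~\ref{aP9u66fbG07iv}. Boundedness of the primal-dual iterates now lets me invoke Proposition~\ref{pP9u66fbG07}\ref{pP9u66fbG07ii} to obtain boundedness of $(a_n)_{n\in\NN}$, $(a_n^*)_{n\in\NN}$, $(b_n)_{n\in\NN}$, $(b_n^*)_{n\in\NN}$; with this in hand, Proposition~\ref{p:2013-05-22}\ref{p:2013-05-22iv} delivers the limit-superior inequality~\eqref{eBnd6Rfdr01a}, which is exactly hypothesis~\ref{aP9u66fbG07v}. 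At this stage every standing hypothesis of Proposition~\ref{pP9u66fbG07}\ref{pP9u66fbG07iii} holds, so for any strictly increasing $(q_n)_{n\in\NN}$ along which $x_{i,q_n}\weakly x_i$ and $v_{k,q_n}^*\weakly v_k^*$, that proposition yields $((x_i)_{i\in I},(v_k^*)_{k\in K})\in\boldsymbol{Z}$. This is precisely the demiclosedness-type premise~\eqref{e:2013-11-27a} required by Proposition~\ref{p:2013-05-22}\ref{p:2013-05-22v}, which then produces the weak convergence of $(x_n)_{n\in\NN}$ to some $\overline{x}\in\mathscr{P}$, of $(v_n^*)_{n\in\NN}$ to some $\overline{v}^*\in\mathscr{D}$, and the membership $(\overline{x},\overline{v}^*)\in\boldsymbol{Z}$.

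The main obstacle is not any individual estimate but the sequencing of the logical dependencies so that no result is applied before its hypotheses are in force: boundedness of $(a_n^*)_{n\in\NN}$ and its companions is needed to obtain~\eqref{eBnd6Rfdr01a}, yet that boundedness itself rests, through Proposition~\ref{pP9u66fbG07}\ref{pP9u66fbG07ii}, on the Fej\'er boundedness of the primal-dual iterates, which is only available once the scheme has been recast as an instance of Algorithm~\ref{algo:1}. Keeping this order straight, and confirming the term-by-term product-space identification between \eqref{eaHE5j-7h503a} and \eqref{e:2013-05-22e}, is the real substance of the proof.
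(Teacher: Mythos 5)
Your proposal is correct and follows essentially the same route as the paper's own proof: recast Algorithm~\ref{algo:3} as an instance of Algorithm~\ref{algo:1} via the product-space reduction of Proposition~\ref{paHE5j-7h502}, then chain Proposition~\ref{p:2013-05-22}\ref{p:2013-05-22i} (boundedness), Proposition~\ref{pP9u66fbG07}\ref{pP9u66fbG07ii} (boundedness of the graph points), Proposition~\ref{p:2013-05-22}\ref{p:2013-05-22iii}--\ref{p:2013-05-22iv} (summability and the limit-superior inequality), and finally Proposition~\ref{pP9u66fbG07}\ref{pP9u66fbG07iii} to verify the demiclosedness premise~\eqref{e:2013-11-27a} of Proposition~\ref{p:2013-05-22}\ref{p:2013-05-22v}. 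The ordering of dependencies you emphasize is exactly the one the paper uses.
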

\begin{proof}
Define $\HH$, $\GG$, $L$, $A$, and $B$ as in 
\eqref{eccx23gr28s}, and
$(a_n)_{n\in\NN}$, $(a^*_n)_{n\in\NN}$, $(b_n)_{n\in\NN}$, and
$(b^*_n)_{n\in\NN}$ as in \eqref{eccx23gr28t}. Further, define
$(\forall n\in\NN)$ $t_n=(t_{k,n})_{k\in K}$ and 
$t_n^*=(t^*_{i,n})_{i\in I}$.
It follows from \eqref{eaHE5j-7h503a}, \eqref{eccx23gr28u},
\eqref{eccx23gr28m}, and 
Proposition~\ref{pP9u66fbG07}\ref{pP9u66fbG07i} that 
Algorithm~\ref{algo:3} is a special case of 
Algorithm~\ref{algo:1}. Hence, upon invoking
Proposition~\ref{paHE5j-7h502}, we can apply the results of
Proposition~\ref{p:2013-05-22} in this setting.
First, Proposition~\ref{p:2013-05-22}\ref{p:2013-05-22i} implies
that the boundedness assumption \ref{aP9u66fbG07i} in 
Proposition~\ref{pP9u66fbG07} is satisfied.
Second, in view of \eqref{eaHE5j-7h503a},
the sequence $(a_n,a^*_n)_{n\in\NN}$ and 
$(b_n,b^*_n)_{n\in\NN}$ are constructed according to 
assumption \ref{aP9u66fbG07iii} in 
Proposition~\ref{pP9u66fbG07}. We thus derive from 
Proposition~\ref{pP9u66fbG07}\ref{pP9u66fbG07ii} that
\begin{equation}
\label{eP9u66fbG09a}
(a_n)_{n\in\NN},\;\;
(a^*_n)_{n\in\NN},\;\;
(b_n)_{n\in\NN},\;\;\text{and}\;\;
(b^*_n)_{n\in\NN}\;\;\text{are bounded}. 
\end{equation}
Furthermore, the summability assumption \ref{aP9u66fbG07iv} in 
Proposition~\ref{pP9u66fbG07} is secured by
Proposition~\ref{p:2013-05-22}\ref{p:2013-05-22iii}, while
the limit superior assumption \ref{aP9u66fbG07v} in 
Proposition~\ref{pP9u66fbG07} holds by
Proposition~\ref{p:2013-05-22}\ref{p:2013-05-22iv}. We 
therefore use 
Proposition~\ref{pP9u66fbG07}\ref{pP9u66fbG07iii} 
to conclude by applying 
Proposition~\ref{p:2013-05-22}\ref{p:2013-05-22v}.
To this end, take $(x,v^*)\in\KKK$ and a strictly increasing 
sequence $(q_n)_{n\in\NN}$ in $\NN$ 
such that $x_{q_n}\weakly x$ and $v^*_{q_n}\weakly v^*$. 
Then Proposition~\ref{pP9u66fbG07}\ref{pP9u66fbG07iii} 
assert that $(x,v^*)\in\boldsymbol{Z}$. Thus, 
\eqref{e:2013-11-27a} is satisfied and the proof is complete.
\end{proof}

\begin{remark}
\label{rferw7er3h07-11a}
Theorem~\ref{taHE5j-7h503} 
subsumes \cite[Theorem~4.3]{Siop14}, which required the
following additional assumptions: the implementation is
synchronous, \emph{i.e.},
\begin{equation}
\label{eferw7er3h07-12t} 
(\forall n\in\NN)(\forall i\in I)(\forall k\in K) \quad
c_i(n)=d_k(n)=n,
\end{equation}
no proper subspace is used, \emph{i.e.},
\begin{equation}
\label{eferw7er3h07-12s} 
\KKK=\bigoplus_{i\in I}\HH_i\oplus\bigoplus_{k\in K}\GG_k, 
\end{equation}
the control is fully parallel, \emph{i.e.}, 
\begin{equation}
\label{eferw7er3h07-12u} 
(\forall n\in\NN)\quad I_n=I\quad\text{and}\quad K_n=K,
\end{equation}
and common proximal parameters are used in the sense that
\begin{equation}
\label{eferw7er3h07-12v} 
(\forall n\in\NN)(\forall i\in I)(\forall k\in K) \quad
\gamma_{i,n}=\gamma_n\quad\text{and}\quad\mu_{k,n}=\mu_n.
\end{equation}
Therefore, the proposed method also subsumes~\cite{Dong05} and
\cite[Proposition~3]{Svai08} (see also \cite{Baus09}), which are 
special cases of~\cite[Theorem~4.3]{Siop14}; see 
\cite[Examples~3.7 and 3.8]{Siop14} for details.
\end{remark}

\begin{remark}
\label{rferw7er3h07-11b}
Theorem~\ref{taHE5j-7h503} 
is closely related to \cite[Proposition~4.2]{Svai09} 
(see also \cite{Baus09}), which considers the special case
of Problem~\ref{prob:2} in which $I=\{1\}$, $z^*_1=0$, $A_1=0$, and 
$(\forall k\in K)$ $\GG_k=\HH_1$ and $L_{k1}=\Id$.  
If in this case one sets
\begin{equation}
\label{eferw7er3h07-12q}
\KKK=\Menge{\big(x_1,(v^*_k)_{k\in K}\big)\in
\HH_1^{p+1}}{\sum_{k\in K}v_k^*=0}
\end{equation}
in our algorithm, we recover the special case of the method of
\cite[Section 4]{Svai09} in which the parameter $\alpha_{ij}^k$
of~\cite[Proposition~4.2]{Svai09} is $1$ if $i=j$, and $0$
otherwise. Other settings of $\alpha_{ij}^k$ in \cite{Svai09} produce
algorithms that are not special cases of our scheme, but must
process the resolvent of every operator at every
iteration and remain fully synchronous as in 
\eqref{eferw7er3h07-12t} and \eqref{eferw7er3h07-12u}.
\end{remark}

\begin{remark}
\label{rFEW7d04a}
Recall that the resolvent of the subdifferential of a proper 
lower semicontinuous convex function $f\colon\HH\to\RX$ is
Moreau's proximity operator 
$(\Id+\partial f)^{-1}=\prox_f\colon x\mapsto
\text{argmin}_{y\in\HH}(f(y)+\|x-y\|^2/2)$ \cite{Livre1,Mor62b}. 
Now consider the setting of 
Problem~\ref{prob:3} and execute Algorithm~\ref{algo:3} with
$(\forall i\in I)$ $A_i=\partial f_i$ and 
$(\forall k\in K)$ $B_k=\partial g_k$. Then,
using the same arguments as in 
\cite[Proposition~5.4]{Siop13}, it follows from 
Theorem~\ref{taHE5j-7h503} that
$(x_n)_{n\in\NN}$ converges weakly to a solution
to \eqref{e:2012-10-23p} and that
$(v^*_n)_{n\in\NN}$ converges weakly to a solution
to \eqref{e:2012-10-23d}.
\end{remark}

\begin{remark}
\label{rFEW7d22}
The framework of \cite[Algorithm~3]{Svai09} for solving
\eqref{e:2009} allows for relative errors in the computation of 
the resolvents. Similar errors may be incorporated in 
Algorithm~\ref{algo:3} by adopting the approximate evaluation 
scheme of Remark~\ref{rFEW7d23} to select points in the
graphs of the monotone operators in \eqref{eaHE5j-7h503a}.
Since Proposition~\ref{pP9u66fbG07} remains valid with 
such approximate resolvent computations, so does 
Theorem~\ref{taHE5j-7h503}.
\end{remark}

\subsection{A best approximation result}

In this section we use the abstract Haugazeau-like algorithm of
Section~\ref{sec:haug} to devise a strongly convergent
asynchronous block-iterative method to construct the best
approximation to a reference point from the Kuhn-Tucker set
\eqref{eferw7er3h02-26k}.

\begin{algorithm}
\label{algo:4}
Consider the setting of Problem~\ref{prob:2}, let $\KKK$ be a 
closed vector subspace of 
$\bigoplus_{i\in I}\HH_i\oplus\bigoplus_{k\in K}\GG_k$ such that
$\boldsymbol{Z}\subset\KKK$, and suppose that 
Assumption~\ref{ass:validcontrol} is in force. 
Let $(\lambda_n)_{n\in\NN}\in\left[\varepsilon,1\right]^{\NN}$, let 
$((x_{i,0})_{i\in I},(v^*_{k,0})_{k\in K})\in\KKK$, and 
iterate
\pagebreak[1]
\begin{equation}
\label{eP9u66fbG03b}
\begin{array}{l}
\text{for}\;n=0,1,\ldots\\
\left\lfloor
\begin{array}{l}
\begin{array}{l}
\text{for every}\;i\in I_n\\
\left\lfloor
\begin{array}{l}
l^*_{i,n}=\sum_{k\in K}L_{ki}^*v_{k,c_i(n)}^*\\
(a_{i,n},a_{i,n}^*)=
\Big(J_{\gamma_{i,c_i(n)} A_i}\big(x_{i,c_i(n)}+\gamma_{i,c_i(n)}
(z^*_i-l^*_{i,n})\big),\gamma_{i,c_i(n)}^{-1}
(x_{i,c_i(n)}-a_{i,n})-l^*_{i,n}\Big)\\[1mm]
\end{array}
\right.\\[1mm]
\text{for every}\;i\in I\smallsetminus I_n\\
\left\lfloor
\begin{array}{l}
(a_{i,n},a_{i,n}^*)=(a_{i,n-1},a_{i,n-1}^*)\\
\end{array}
\right.\\[1mm]
\text{for every}\;k\in K_n\\
\left\lfloor
\begin{array}{l}
l_{k,n}=\sum_{i\in I}L_{ki}x_{i,d_k(n)}\\
(b_{k,n},b^*_{k,n})=\Big(r_k+J_{\mu_{k,d_k(n)}B_k}
\big(l_{k,n}+\mu_{k,d_k(n)}v_{k,d_k(n)}^*-r_k\big),
v_{k,d_k(n)}^*+\mu_{k,d_k(n)}^{-1}(l_{k,n}-b_{k,n})\Big)\\
\end{array}
\right.\\[1mm]
\text{for every}\;k\in K\smallsetminus K_n\\
\left\lfloor
\begin{array}{l}
(b_{k,n},b^*_{k,n})=(b_{k,n-1},b^*_{k,n-1})\\
\end{array}
\right.\\[1mm]
\big((t^*_{i,n})_{i\in I},(t_{k,n})_{k\in K}\big)=P_{\KKK}
\big((a^*_{i,n}+\sum_{k\in K}L_{ki}^*b^*_{k,n})_{i\in I},
(b_{k,n}-\sum_{i\in I}L_{ki}a_{i,n})_{k\in K}\big)\\
\tau_n=\sum_{i\in I}\|t_{i,n}^*\|^2+\sum_{k\in K}\|t_{k,n}\|^2\\
\text{if}\;\tau_n>0\\
\left\lfloor
\begin{array}{l}
\theta_n=\Frac{\lambda_n}{\tau_n}\,\text{\rm max} 
\braces{0,\sum_{i\in I}\big(\scal{x_{i,n}}{t^*_{i,n}}-
\scal{a_{i,n}}{a^*_{i,n}}\big)+\sum_{k\in K}
\big(\scal{t_{k,n}}{v^*_{k,n}} -\scal{b_{k,n}}{b^*_{k,n}}\big)}\\
\end{array}
\right.\\
\text{else}\;\theta_n = 0 \\
\text{for every}\;i\in I\\
\left\lfloor
\begin{array}{l}
x_{i,n+1/2}=x_{i,n}-\theta_n t^*_{i,n}\\
\end{array}
\right.\\
\text{for every}\;k\in K\\
\left\lfloor
\begin{array}{l}
v^*_{k,n+1/2}=v^*_{k,n}-\theta_n t_{k,n}\\
\end{array}
\right.\\
\end{array}\\
\chi_n=\sum_{i\in I}\scal{x_{i,0}-x_{i,n}}{x_{i,n}-x_{i,n+1/2}}
+\sum_{k\in K}\scal{v_{k,0}^*-v_{k,n}^*}{v_{k,n}^*-v_{k,n+1/2}^*}\\
\mu_n=\sum_{i\in I}\|x_{i,0}-x_{i,n}\|^2+\sum_{k\in K}
\|v_{k,0}^*-v_{k,n}^*\|^2\\
\nu_n=\sum_{i\in I}\|x_{i,n}-x_{i,n+1/2}\|^2+
\sum_{k\in K}\|v_{k,n}^*-v_{k,n+1/2}^*\|^2\\
\rho_n=\mu_n\nu_n-\chi_n^2\\
\text{if}\;\rho_n=0\;\text{and}\;\chi_n\geq 0\\
\left\lfloor
\begin{array}{l}
\text{for every}\;i\in I\\
\left\lfloor
\begin{array}{l}
x_{i,n+1}=x_{i,n+1/2}\\
\end{array}
\right.\\
\text{for every}\;k\in K\\
\left\lfloor
\begin{array}{l}
v^*_{k,n+1}=v_{k,n+1/2}^*\\
\end{array}
\right.\\
\end{array}
\right.\\
\text{if}\;\rho_n>0\;\text{and}\;\chi_n\nu_n\geq\rho_n\\
\left\lfloor
\begin{array}{l}
\text{for every}\;i\in I\\
\left\lfloor
\begin{array}{l}
x_{i,n+1}=x_{i,0}+(1+\chi_n/\nu_n)(x_{i,n+1/2}-x_{i,n})\\
\end{array}
\right.\\
\text{for every}\;k\in K\\
\left\lfloor
\begin{array}{l}
v^*_{k,n+1}=v_{k,0}^*+(1+\chi_n/\nu_n)(v_{k,n+1/2}^*-v_{k,n}^*)
\end{array}
\right.\\
\end{array}
\right.\\
\text{if}\;\rho_n>0\;\text{and}\;\chi_n\nu_n<\rho_n\\
\left\lfloor
\begin{array}{l}
\text{for every}\;i\in I\\
\left\lfloor
\begin{array}{l}
x_{i,n+1}=x_{i,n}+(\nu_n/\rho_n)\big(\chi_n(x_{i,0}-x_{i,n})
+\mu_n(x_{i,n+1/2}-x_{i,n})\big)\\
\end{array}
\right.\\
\text{for every}\;k\in K\\
\left\lfloor
\begin{array}{l}
v^*_{k,n+1}=v_{k,n}^*+(\nu_n/\rho_n)
\big(\chi_n(v_{k,0}^*-v_{k,n}^*)
+\mu_n(v_{k,n+1/2}^*-v_{k,n}^*)\big).
\end{array}
\right.\\
\end{array}
\right.\\
\end{array}
\right.\\[4mm]
\end{array}
\end{equation}
\end{algorithm}
\pagebreak[1]

\begin{theorem}
\label{tP9u66fbG04}
Consider the setting of Problem~\ref{prob:2} and
Algorithm~\ref{algo:4}, and suppose that
$\mathscr{P}\neq\emp$. Define
\begin{equation}
\label{eccx23gr28z}
(\forall n\in\NN)\quad
x_n=(x_{i,n})_{i\in I}\quad\text{and}\quad
v_n^*=(v_{k,n}^*)_{k\in K}
\end{equation}
and set $(\overline{x},\overline{v}^*)=
P_{\boldsymbol{Z}}(x_0,v^*_0)$.
Then $(x_n)_{n\in\NN}$ converges strongly to 
$\overline{x}\in\mathscr{P}$ and $(v_n^*)_{n\in\NN}$ converges 
strongly to $\overline{v}^*\in\mathscr{D}$.
\end{theorem}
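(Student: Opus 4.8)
The plan is to mirror the proof of Theorem~\ref{taHE5j-7h503} almost verbatim, replacing the Fej\'er-monotone analysis of Algorithm~\ref{algo:1} (Proposition~\ref{p:2013-05-22}) with the Haugazeau-type analysis of Algorithm~\ref{algo:2} (Proposition~\ref{pP9u66fbG03}). First I would define $\HH$, $\GG$, $L$, $A$, and $B$ as in \eqref{eccx23gr28s}, together with the aggregated sequences $(a_n)_{n\in\NN}$, $(a^*_n)_{n\in\NN}$, $(b_n)_{n\in\NN}$, and $(b^*_n)_{n\in\NN}$ as in \eqref{eccx23gr28t} and the vectors $t_n=(t_{k,n})_{k\in K}$, $t_n^*=(t^*_{i,n})_{i\in I}$. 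By Proposition~\ref{paHE5j-7h502}, Problem~\ref{prob:2} is an instance of Problem~\ref{prob:1} in these product spaces, and by Proposition~\ref{pP9u66fbG07}\ref{pP9u66fbG07i} the pairs $(a_n,a_n^*)$ and $(b_n,b_n^*)$ lie in $\gra A$ and $\gra B$ respectively. A line-by-line comparison of \eqref{eP9u66fbG03b} with \eqref{eP9u66fbG02e}, using the identity \eqref{eccx23gr28m} for $L^*$ and Remark~\ref{rferw7er3h06-04} to match the Haugazeau update, then shows that Algorithm~\ref{algo:4} is exactly Algorithm~\ref{algo:2} driven by these sequences, so Proposition~\ref{pP9u66fbG03} is available.

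The core of the argument is to verify the hypotheses \ref{aP9u66fbG07i}, \ref{aP9u66fbG07iv}, and \ref{aP9u66fbG07v} of Proposition~\ref{pP9u66fbG07}, exactly as in the weakly convergent case. Assumption~\ref{ass:validcontrol} supplies hypothesis \ref{aP9u66fbG07ii}, and \eqref{eP9u66fbG03b} realizes the recycling scheme \eqref{eP9u66fbG07a}, giving \ref{aP9u66fbG07iii}. The boundedness hypothesis \ref{aP9u66fbG07i} follows from Proposition~\ref{pP9u66fbG03}\ref{pP9u66fbG03i}; this in turn yields, via Proposition~\ref{pP9u66fbG07}\ref{pP9u66fbG07ii}, that $(a_n)_{n\in\NN}$, $(a^*_n)_{n\in\NN}$, $(b_n)_{n\in\NN}$, and $(b^*_n)_{n\in\NN}$ are bounded, which is precisely the premise needed to invoke Proposition~\ref{pP9u66fbG03}\ref{pP9u66fbG03iv}. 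The summability hypothesis \ref{aP9u66fbG07iv} is exactly Proposition~\ref{pP9u66fbG03}\ref{pP9u66fbG03ii}, and the limit-superior condition \ref{aP9u66fbG07v} is then delivered by Proposition~\ref{pP9u66fbG03}\ref{pP9u66fbG03iv}.

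It then remains to discharge the weak-cluster-point condition \eqref{eferw7er3h06-03g} required by Proposition~\ref{pP9u66fbG03}\ref{pP9u66fbG03v}. Given $(x,v^*)\in\KKK$ and a strictly increasing sequence $(q_n)_{n\in\NN}$ in $\NN$ with $x_{q_n}\weakly x$ and $v^*_{q_n}\weakly v^*$, I would feed this subsequence into Proposition~\ref{pP9u66fbG07}\ref{pP9u66fbG07iii} as hypothesis \ref{aP9u66fbG07vi}; since \ref{aP9u66fbG07i}, \ref{aP9u66fbG07iv}, and \ref{aP9u66fbG07v} have just been established, that proposition yields $(x,v^*)\in\boldsymbol{Z}$, which is exactly \eqref{eferw7er3h06-03g}. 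A final application of Proposition~\ref{pP9u66fbG03}\ref{pP9u66fbG03v} then gives the strong convergence of $(x_n)_{n\in\NN}$ to $\overline{x}\in\mathscr{P}$ and of $(v^*_n)_{n\in\NN}$ to $\overline{v}^*\in\mathscr{D}$, where $(\overline{x},\overline{v}^*)=P_{\boldsymbol{Z}}(x_0,v^*_0)$.

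The obstacle I expect is bookkeeping rather than conceptual: one must check that the product-space reduction \eqref{eccx23gr28s}--\eqref{eccx23gr28m} turns the componentwise primal--dual update and the componentwise formulas of \eqref{eP9u66fbG03b} into the single abstract update of \eqref{eP9u66fbG02e}, verifying in particular that the scalars $\chi_n$, $\mu_n$, $\nu_n$, $\rho_n$ assembled componentwise in \eqref{eP9u66fbG03b} coincide with those of Lemma~\ref{l:haugazeauy} computed for the triplet $\big((x_0,v^*_0),(x_n,v^*_n),(x_{n+1/2},v^*_{n+1/2})\big)$ in $\KKK$. All of the genuine analytic work is sealed inside Proposition~\ref{pP9u66fbG07}, which is shared with the proof of Theorem~\ref{taHE5j-7h503}; the only real difference from that theorem is the substitution of the strongly convergent Haugazeau machinery of Proposition~\ref{pP9u66fbG03} for the weakly convergent Fej\'er machinery of Proposition~\ref{p:2013-05-22}.
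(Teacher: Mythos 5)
Your proposal is correct and follows essentially the same route as the paper's own proof: the product-space reduction via \eqref{eccx23gr28s}--\eqref{eccx23gr28m}, the identification of Algorithm~\ref{algo:4} as an instance of Algorithm~\ref{algo:2}, the verification of hypotheses \ref{aP9u66fbG07i}, \ref{aP9u66fbG07iv}, and \ref{aP9u66fbG07v} of Proposition~\ref{pP9u66fbG07} through parts \ref{pP9u66fbG03i}, \ref{pP9u66fbG03ii}, and \ref{pP9u66fbG03iv} of Proposition~\ref{pP9u66fbG03} (including the chaining of boundedness through Proposition~\ref{pP9u66fbG07}\ref{pP9u66fbG07ii}), and the final appeal to Proposition~\ref{pP9u66fbG07}\ref{pP9u66fbG07iii} to discharge the weak-cluster-point condition required by Proposition~\ref{pP9u66fbG03}\ref{pP9u66fbG03v}. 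The paper's proof is exactly this argument, so no further comment is needed.
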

\begin{proof}
Define $\HH$, $\GG$, $L$, $A$, and $B$ as in 
\eqref{eccx23gr28s}, 
$(a_n)_{n\in\NN}$, $(a^*_n)_{n\in\NN}$, $(b_n)_{n\in\NN}$, and
$(b^*_n)_{n\in\NN}$ as in \eqref{eccx23gr28t}, and set
$(\forall n\in\NN)$ $t_n=(t_{k,n})_{k\in K}$ and 
$t_n^*=(t^*_{i,n})_{i\in I}$.
In view of \eqref{eP9u66fbG03b}, \eqref{eccx23gr28z},
\eqref{eccx23gr28m}, and 
Proposition~\ref{pP9u66fbG07}\ref{pP9u66fbG07i},
Algorithm~\ref{algo:4} is an instance of 
Algorithm~\ref{algo:2}. Hence, upon invoking
Proposition~\ref{paHE5j-7h502}, we can apply the results of
Proposition~\ref{pP9u66fbG03} in this setting.
First, Proposition~\ref{pP9u66fbG03}\ref{pP9u66fbG03i} 
implies that assumption \ref{aP9u66fbG07i} in 
Proposition~\ref{pP9u66fbG07} is satisfied.
Second, in view of 
\eqref{eP9u66fbG03b}, assumption \ref{aP9u66fbG07iii} in 
Proposition~\ref{pP9u66fbG07} is satisfied as well.
Thus, Proposition~\ref{pP9u66fbG07}\ref{pP9u66fbG07ii} 
asserts that the sequences
$(a_{n})_{n\in\NN}$, $(a^*_{n})_{n\in\NN}$, $(b_{n})_{n\in\NN}$,
and $(b^*_{n})_{n\in\NN}$ are bounded. 
Third, assumption \ref{aP9u66fbG07iv} in 
Proposition~\ref{pP9u66fbG07} is secured by
Proposition~\ref{pP9u66fbG03}\ref{pP9u66fbG03ii}.
Finally, assumption \ref{aP9u66fbG07v} in 
Proposition~\ref{pP9u66fbG07} holds by
Proposition~\ref{pP9u66fbG03}\ref{pP9u66fbG03iv}. We 
therefore use 
Proposition~\ref{pP9u66fbG07}\ref{pP9u66fbG07iii} to
conclude by invoking
Proposition~\ref{pP9u66fbG03}\ref{pP9u66fbG03v}. 
Take $(x,v^*)\in\KKK$ 
and a strictly increasing sequence $(q_n)_{n\in\NN}$ in $\NN$ 
such that $x_{q_n}\weakly x$ and $v^*_{q_n}\weakly v^*$. 
Then it follows from 
Proposition~\ref{pP9u66fbG07}\ref{pP9u66fbG07iii} that 
$(x,v^*)\in\boldsymbol{Z}$, which completes the proof.
\end{proof}

\begin{remark}
\label{rZgt70R04b}
As in Remark~\ref{rFEW7d04a}, 
consider the setting of Problem~\ref{prob:3} and execute
Algorithm~\ref{algo:4} with
$(\forall i\in I)$ $A_i=\partial f_i$ and 
$(\forall k\in K)$ $B_k=\partial g_k$. 
Then Theorem~\ref{tP9u66fbG04} asserts that
$(x_n)_{n\in\NN}$ converges strongly to a solution
$\overline{x}$ to \eqref{e:2012-10-23p} and that
$(v^*_n)_{n\in\NN}$ converges strongly to a solution
$\overline{v}^*$ to \eqref{e:2012-10-23d} such that 
$(\overline{x},\overline{v}^*)$ is the projection of 
$(x_0,v_0^*)$ onto the corresponding Kuhn-Tucker set
\eqref{eferw7er3h02-26k}.
\end{remark}

\begin{remark}
\label{rferw7er3h07-12}
Theorem~\ref{tP9u66fbG04} improves upon 
\cite[Proposition~4.2]{Nfao15}, which addresses the special 
case in which the algorithm is synchronous and the restrictions
\eqref{eferw7er3h07-12t}--\eqref{eferw7er3h07-12v} are imposed. The 
latter was applied in the context of Remark~\ref{rZgt70R04b}
to domain decomposition methods in \cite{Nume15}; 
Theorem~\ref{tP9u66fbG04} provides a new
range of ways to revisit such applications using
asynchronous block-iterative calculations.
\end{remark}

\begin{remark}
\label{rFEW7d21}
By an argument similar to that of Remark~\ref{rFEW7d22},
Theorem~\ref{tP9u66fbG04} remains valid if the resolvent
computations in~\eqref{eP9u66fbG03b} are replaced by
approximate evaluations meeting the conditions in 
Remark~\ref{rFEW7d23}.
\end{remark}

\small

\end{document}